\documentclass{amsart}
\usepackage{amsmath, amsfonts, amsthm, framed}
\usepackage{xspace}
\usepackage{newunicodechar}
\usepackage{fullpage}
\usepackage{enumitem}
\usepackage{quiver}
\usepackage{fontawesome}
\usepackage[numbers]{natbib}
\usepackage{soul}
\usepackage{cancel}
\usepackage{xspace}
\usepackage{eucal}
\usepackage[colorlinks=true]{hyperref}
\usepackage{listings}

\lstset{language=lean}			
\definecolor{keywordcolor}{rgb}{0.7, 0.1, 0.1}   
\definecolor{commentcolor}{rgb}{0.4, 0.4, 0.4}   
\definecolor{symbolcolor}{rgb}{0, 0, 0.8}    
\definecolor{tacticcolor}{rgb}{0, 0, 0.8}    
\definecolor{sortcolor}{rgb}{0.1, 0.5, 0.1}      
\newcommand*{\lean}[1]{\lstinline{#1}\xspace} 
\theoremstyle{plain}
\newtheorem{theorem}{Theorem}[section]
\newcounter{maintheorems}
\setcounter{maintheorems}{1}
\newtheorem{maintheorem}{Theorem}[maintheorems]

\newcounter{maintheoremsbody}
\setcounter{maintheoremsbody}{1}
\newtheorem{maintheorembody}{Theorem}[maintheoremsbody]

\newtheorem{lemma}[theorem]{Lemma}
\newtheorem{proposition}[theorem]{Proposition}
\newtheorem{corollary}[theorem]{Corollary}
\theoremstyle{definition}
\newtheorem{definition}[theorem]{Definition}
\newtheorem{remark}[theorem]{Remark}
\newtheorem{notation}[theorem]{Notation}
\newtheorem{construction}[theorem]{Construction}

\renewcommand{\epsilon}{\varepsilon}
\newcommand*{\N}{\mathbb{N}}
\newcommand*{\Z}{\mathbb{Z}}
\newcommand*{\A}{\mathcal{A}}
\newcommand*{\B}{\mathcal{B}}
\newcommand*{\C}{\mathcal{C}}
\newcommand*{\D}{\mathcal{D}}
\newcommand*{\E}{\mathcal{E}}
\newcommand*{\F}{\mathcal{F}}

\newcommand*{\dq}{\mathsf{dq}}
\newcommand*{\Hom}{\operatorname{Hom}}
\newcommand*{\op}{\operatorname{op}}
\newcommand*{\Lan}{\operatorname{Lan}}
\newcommand*{\colim}{\operatorname{colim}}

\newcommand*{\Set}{\mathsf{Set}}
\newcommand*{\FinSet}{\mathsf{FinSet}}
\newcommand*{\Sh}{\mathsf{Sh}}
\newcommand*{\PSh}{\mathsf{PSh}}
\newcommand*{\Top}{\mathsf{Top}}
\newcommand*{\CompHaus}{\mathsf{CompHaus}}
\newcommand*{\Profinite}{\mathsf{Profinite}}

\newcommand*{\CondSet}{\mathsf{CondSet}}
\newcommand*{\Mod}{\mathsf{Mod}}
\newcommand*{\CondMod}{\mathsf{CondMod}}
\newcommand*{\Cont}{\mathsf{Cont}}
\newcommand*{\LocConst}{\mathsf{LocConst}}
\newcommand*{\solid}{{\rule[0.36pt]{3.5pt}{3.5pt}}} 
\makeatletter
\newenvironment*{g@n@riclist}[1]{%
    \begin{enumerate}[label=#1]
    }{%
    \end{enumerate}
    }
\newenvironment{list_conditions}{
    \begin{g@n@riclist}{\arabic*\textup{)}}
    }{%
    \end{g@n@riclist}
    }

\makeatother

\newcommand*{\mathlib}{\textsc{mathlib}\xspace} 
\newcommand*{\Lean}{\textsc{Lean}\xspace} 

\usepackage{stackengine}
\stackMath
\setstackgap{S}{-2pt}
\newcommand{\func}{\Shortstack{\rightharpoonup \\ \hspace{-2.5pt}\rightharpoondown}}
\newunicodechar{⥤}{\ensuremath{\func}}

\title{A formal characterization of discrete condensed objects}
\author{Dagur Asgeirsson}
\address{University of Copenhagen}
\email{dagur@math.ku.dk}
\begin{document}

\begin{abstract}
    Condensed mathematics, developed by Clausen and Scholze over the last few years, proposes a generalization of topology with better categorical properties. 
    It replaces the concept of a topological space by that of a condensed set, which can be defined as a sheaf on a certain site of compact Hausdorff spaces. 
    Since condensed sets are supposed to be a generalization of topological spaces, one would like to be able to study the notion of discreteness.
    There are various ways to define what it means for a condensed set to be discrete.
    In this paper we describe them, and prove that they are equivalent.
    The results have been fully formalized in the \Lean proof assistant. 
\end{abstract}

\maketitle

\section{Introduction}
Condensed mathematics (originally defined in \cite{condensed}, see also \cite{msc-dagur,cat-found-cond, pyknotic,complex,analytic-stacks,analytic}) is a new framework which is suitable for applying algebraic techniques, such as homological algebra, in a setting where the objects of study are of a topological nature. In this framework, topological spaces are replaced by so-called \emph{condensed sets}. The goal of this paper is to explore one aspect of the connection between condensed sets and topological spaces --- the important example of discrete spaces. In condensed mathematics, the notion of discreteness becomes surprisingly subtle. 

All results presented in this paper, except for an application described in \S\ref{subsec:application}, have been formalized in the \Lean theorem prover \cite{lean4}, and integrated into its mathematical library \mathlib \cite{mathlib}. There are subtle set-theoretic issues that arise in the foundations of condensed mathematics, to which the type theory of \Lean provides a satisfactory solution, as explained in \S\ref{sec:size} (a short overview of the design principles of \Lean and \mathlib, as relevant to condensed mathematics, is given in \cite[\S2]{cat-found-cond}). Another approach to these issues is to avoid the problem by switching to the theory of \emph{light condensed objects} (see the recent lecture series \cite{analytic-stacks}). I have also formalized the foundations of light condensed mathematics. This material is in \mathlib, and the formalization there provides a possibility to unify the two as described in \S\ref{sec:size}. The material in \S\ref{sec:functors} was formalized in such a unified way (see \S\ref{subsec:generality} for more details). 

One of the main contributions of the present work is that it describes and completely resolves a big subtlety in the theory of discrete condensed sets, which was discovered during the formalization process. Discrete condensed sets are most naturally defined as constant sheaves. It turns out to be quite difficult to prove that a constant sheaf on the defining site of condensed sets (and light condensed sets) is in fact given by locally constant maps --- the analogue of a result that is well known in the case of sheaves on a topological space. This result is important because without it, there is no chance of an explicit description of the sections of a discrete condensed set. More broadly, it is my hope that this paper further improves the state of the literature on condensed mathematics, which prior to \cite{cat-found-cond} consisted mostly of online lecture notes \cite{condensed,analytic,complex} and videos \cite{analytic-stacks}.

We provide several equivalent conditions on a condensed set that characterize it as discrete (Theorem~\ref{thm:A}, Theorem~\ref{thm:B}, Theorem~\ref{thm:summary}). Moreover, we prove that the characterization of discrete condensed sets carries over nicely to condensed modules over a ring (Theorem~\ref{thm:C}, Theorem~\ref{thm:summary_mod}). All the results hold in both the original setting of condensed objects, and that of light condensed objects.

Throughout the text, we use the symbol \faExternalLink~for external links. Every mathematical declaration will be accompanied by such a link to the corresponding formal statement in \mathlib. All the links are to the same commit to the master branch, ensuring that the links stay usable. 

\subsection{Main results} 
We give the definition of a condensed set and some related terminology, before stating the main results of this paper. 

Recall that a \emph{condensed set} \href{https://github.com/leanprover-community/mathlib4/blob/ffb2f0dbf80ad18fa75f85bd0facee6737b59acf/Mathlib/Condensed/Basic.lean#L52}{\faExternalLink} is a sheaf for the coherent topology on the category of compact Hausdorff spaces\footnote{This is the \mathlib definition. The original definition uses the category of profinite sets as the defining site. These two definitions are equivalent, as formalized in \mathlib and described in \cite{cat-found-cond}.}. More concretely, it is a presheaf $X : \CompHaus^{\op} \to \Set$, satisfying the two properties \href{https://github.com/leanprover-community/mathlib4/blob/ffb2f0dbf80ad18fa75f85bd0facee6737b59acf/Mathlib/Condensed/Explicit.lean#L164-L166}{\faExternalLink}:
\begin{list_conditions}
    \item $X$ preserves finite products: in other words, for every finite family $(T_i)$ of compact Hausdorff spaces, the natural map 
    \[
        X\Bigl(\coprod_i T_i\Bigr) \longrightarrow \prod_i X(T_i)    
    \]
    is a bijection.
    \item For every continuous surjection $\pi \colon S \to T$ of compact Hausdorff spaces, the diagram 
    \[\begin{tikzcd}
        {X(T)} & {X(S)} & {X(S \times_T S)}
        \arrow["{X(\pi)}", from=1-1, to=1-2]
        \arrow[shift left, from=1-2, to=1-3]
        \arrow[shift right, from=1-2, to=1-3]
    \end{tikzcd}\]
    is an equalizer (the two parallel morphisms being induced by the projections in the pullback). 
\end{list_conditions}
Furthermore, there is a functor from topological spaces to condensed sets \href{https://github.com/leanprover-community/mathlib4/blob/ffb2f0dbf80ad18fa75f85bd0facee6737b59acf/Mathlib/Condensed/TopComparison.lean#L141-L142}{\faExternalLink}, which takes a topological space $X$ to the sheaf $\Cont(-,X)$, which maps a compact Hausdorff space $S$ to the set of continuous maps $S \to X$. This functor is fully faithful when restricted to \emph{compactly generated spaces} \href{https://github.com/leanprover-community/mathlib4/blob/ffb2f0dbf80ad18fa75f85bd0facee6737b59acf/Mathlib/Condensed/TopCatAdjunction.lean#L193-L195}{\faExternalLink} \href{https://github.com/leanprover-community/mathlib4/blob/ffb2f0dbf80ad18fa75f85bd0facee6737b59acf/Mathlib/Topology/Category/CompactlyGenerated.lean#L31-L35}{\faExternalLink} (these include all reasonable topological spaces most mathematicians care about). Continuous maps from a point to a topological space $X$ identify with the underlying set $X$, which is why we define the \emph{underlying set} \href{https://github.com/leanprover-community/mathlib4/blob/ffb2f0dbf80ad18fa75f85bd0facee6737b59acf/Mathlib/Condensed/Discrete/Basic.lean#L42-L44}{\faExternalLink} of a condensed set $X$ to be the set $X(*)$. Even though condensed sets do not form a concrete category, we will sometimes call the functor $X \mapsto X(*)$ the \emph{forgetful functor}. By analogy with topological spaces, we would like to be able to study \emph{discrete condensed sets}. But what does it mean for a condensed set to be discrete?

Let's take a look at discreteness in topology. A discrete topological space $X$ is one in which all subsets are open. It is characterized by the property that every map $X \to Y$, where $Y$ is any topological space, is continuous. In other words, we have an adjoint pair of functors
\[\begin{tikzcd}
	\Set & \Top
	\arrow[""{name=0, anchor=center, inner sep=0}, "\delta", curve={height=-6pt}, from=1-1, to=1-2]
	\arrow[""{name=1, anchor=center, inner sep=0}, "U", curve={height=-6pt}, from=1-2, to=1-1]
	\arrow["\dashv"{anchor=center, rotate=-90}, draw=none, from=0, to=1]
\end{tikzcd}\]
where $U$ denotes the forgetful functor and $\delta$ the functor which equips a set with the discrete topology.
This suggests that we should try to define a functor $\Set \to \CondSet$ which ``equips a set with a discrete condensed structure'', which is left adjoint to the underlying set functor $\CondSet \to \Set$.

An obvious candidate for this functor $\Set \to \CondSet$ is \href{https://github.com/leanprover-community/mathlib4/blob/ffb2f0dbf80ad18fa75f85bd0facee6737b59acf/Mathlib/Condensed/Discrete/Basic.lean#L35-L36}{\faExternalLink}
\begin{align*}
    \underline{(-)} : \Set & \to \CondSet
\end{align*}
where $\underline{X}$ denotes the condensed set given by the constant sheaf at $X$. It is almost by definition left adjoint to the underlying set functor $\CondSet \to \Set$ \href{https://github.com/leanprover-community/mathlib4/blob/ffb2f0dbf80ad18fa75f85bd0facee6737b59acf/Mathlib/Condensed/Discrete/Basic.lean#L50-L51}{\faExternalLink}. 

Another candidate comes from the functor from topological spaces to condensed sets that we already have, which takes a topological space $X$ to the sheaf of continuous maps to $X$. Precomposing this functor with the functor which equips a set $X$ with the discrete topology, we obtain a functor $\Set \to \CondSet$. Rephrasing the above, this is the functor which takes a set $X$ to the sheaf of locally constant maps to $X$ \href{https://github.com/leanprover-community/mathlib4/blob/ffb2f0dbf80ad18fa75f85bd0facee6737b59acf/Mathlib/Condensed/Discrete/LocallyConstant.lean#L228-L237}{\faExternalLink}. We denote this functor by 
\begin{align*}
    L : \Set & \to \CondSet \\
    X & \mapsto \LocConst(-, X)
\end{align*}

Each of these functors has its advantages and disadvantages. The constant sheaf functor has nice abstract properties and is obviously a left adjoint to the underlying set functor, while the functor $L$ has better definitional properties given by its concrete description.
Our first objective is to construct a natural isomorphism $L \cong \underline{(-)}$. We do this by constructing an adjunction $L \dashv U$ where $U : \CondSet \to \Set$ is the underlying set functor mapping a condensed set $X$ to $X(*)$. 

We can now give the official definition of a discrete condensed set.
\begin{definition}\label{def:discrete_condensed_set} \href{https://github.com/leanprover-community/mathlib4/blob/ffb2f0dbf80ad18fa75f85bd0facee6737b59acf/Mathlib/Condensed/Discrete/Characterization.lean#L45}{\faExternalLink} \href{https://github.com/leanprover-community/mathlib4/blob/ffb2f0dbf80ad18fa75f85bd0facee6737b59acf/Mathlib/CategoryTheory/Sites/ConstantSheaf.lean#L80-L81}{\faExternalLink}
    A condensed set $X$ is \emph{discrete} if it is in the essential image of the functor 
    \[
        \underline{(-)} : \Set \to \CondSet,
    \] 
    i.e. if there exists some set $X'$ and an isomorphism 
    \[ 
        X \cong \underline{X'}.
    \]
\end{definition}

The discussion preceding Definition~\ref{def:discrete_condensed_set} immediately gives our first main theorem:
\begin{maintheorem}\label{thm:A} \href{https://github.com/leanprover-community/mathlib4/blob/ffb2f0dbf80ad18fa75f85bd0facee6737b59acf/Mathlib/Condensed/Discrete/Characterization.lean#L74-L104}{\faExternalLink}
    A condensed set is discrete if and only if it is in the essential image of the functor 
    \[
        L : \Set \to \CondSet,
    \] 
    i.e. if there exists some set $X'$ and an isomorphism of condensed sets
    \[
        X \cong \LocConst(-,X')
    \]
\end{maintheorem}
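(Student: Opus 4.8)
The plan is to reduce the theorem to a natural isomorphism $L \cong \underline{(-)}$ and to obtain the latter from uniqueness of adjoints. By Definition~\ref{def:discrete_condensed_set}, a condensed set is discrete exactly when it lies in the essential image of $\underline{(-)}$; so once $L \cong \underline{(-)}$ is known, the two functors have the same essential image and the theorem is immediate. Now $\underline{(-)} \dashv U$ holds essentially by construction: $\underline{(-)}$ is sheafification applied to the constant presheaf, the constant-presheaf functor is left adjoint to evaluation at the terminal compact Hausdorff space $*$, and sheafification is left adjoint to the inclusion of sheaves into presheaves, so the composite $\underline{(-)}$ is left adjoint to $U = (X \mapsto X(*))$. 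Hence it suffices to produce an adjunction $L \dashv U$; uniqueness of left adjoints to $U$ then forces $L \cong \underline{(-)}$.

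To build $L \dashv U$ I would directly exhibit a bijection
\[
    \Hom_{\CondSet}(\LocConst(-, X'), Y) \;\cong\; \Hom_{\Set}(X', Y(*)),
\]
natural in the set $X'$ and the condensed set $Y$. The forward map sends $\alpha$ to its component at the point, using $\LocConst(*, X') \cong X'$. For the inverse, take a map of sets $g \colon X' \to Y(*)$. Given a compact Hausdorff space $S$ and a locally constant $f \colon S \to X'$, compactness forces $f$ to have finite image, giving a finite clopen partition $S = \coprod_i S_i$ on which $f$ is constant with value $x_i \in X'$; since $Y$ preserves finite products, $Y(S) \cong \prod_i Y(S_i)$, and I define the image of $f$ to be the tuple whose $i$-th entry is $Y(S_i \to *)(g(x_i))$, where $S_i \to *$ is the unique map. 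Injectivity of the forward map is the observation that any $\alpha$ is pinned down by its value at $*$: each restriction $f|_{S_i}$ factors as $S_i \to * \to X'$, so naturality of $\alpha$ and finite-product preservation recover $\alpha_S(f)$; the same computation shows the two assignments are mutually inverse.

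The triangle identities and the naturality of the bijection in $X'$ and $Y$ are then formal. I expect the main obstacle to be verifying that the inverse construction genuinely lands in $\Hom_{\CondSet}$, i.e. that $f \mapsto (Y(S_i \to *)(g(x_i)))_i$ is natural in $S$, being compatible with restriction along every continuous map $T \to S$ of compact Hausdorff spaces and not merely along the inclusions of the partition. Concretely one must check that the clopen partition of $S$ pulls back to the partition of $T$ induced by $f$, and that this is compatible with the finite-product isomorphisms $Y(S) \cong \prod_i Y(S_i)$; the essential input is that $Y$ preserves finite products (condition 1), applied to the finite partitions that compactness imposes on locally constant maps, while the descent condition 2 is what makes $\LocConst(-, X')$ a sheaf in the first place so that the left-hand hom-set is the right one. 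This web of naturality checks is the formal analogue of the classical statement that the constant sheaf is the sheaf of locally constant functions, and it is the technically delicate heart of the argument.
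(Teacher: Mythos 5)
Your proposal is correct, and its skeleton is the same as the paper's: construct an adjunction $L \dashv U$, deduce $L \cong \underline{(-)}$ from uniqueness of left adjoints (Construction~\ref{const:L_iso_D}), and conclude that the essential images of the two functors coincide, which is Theorem~\ref{thm:A} by Definition~\ref{def:discrete_condensed_set}. Your description of $\underline{(-)} \dashv U$ as the composite of the constant-presheaf and sheafification adjunctions is exactly Construction~\ref{const:constantSheafAdj}, and your inverse map is exactly the paper's counit, Construction~\ref{const:counit}. The one genuine difference is how the adjunction $L \dashv U$ is packaged. The paper uses unit, counit and triangle identities, which forces four verifications: naturality of the counit in $S$ (Lemma~\ref{lemma:counit_natural_S}), naturality in the condensed set $X$ (Lemma~\ref{lemma:counit_natural_X}, the longest proof in the paper), and the two triangles (Lemmas~\ref{lemma:left_triangle} and~\ref{lemma:right_triangle}). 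You instead exhibit a bijection $\Hom_{\CondSet}(L(X'),Y)\cong\Hom_{\Set}(X',Y(*))$ natural in both variables, with evaluation at $*$ as the forward map. This buys you something real: naturality of evaluation in $X'$ and $Y$ is trivial, and naturality of its inverse comes for free (the inverse of a natural family of bijections is natural), so the analogue of Lemma~\ref{lemma:counit_natural_X} never has to be checked by hand. What remains is precisely well-definedness of the inverse --- your naturality in $S$, the same computation as Lemma~\ref{lemma:counit_natural_S}, including the bookkeeping that the fibres of $f \circ g$ for $g \colon T \to S$ are the \emph{nonempty} preimages under $g$ of the fibres of $f$ --- and mutual inverseness, which is your ``pinned down by the value at $*$'' argument (the paper's Lemma~\ref{lemma:presheaf_ext} together with the computation in Lemma~\ref{lemma:left_triangle}). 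So your route is marginally leaner as written informally, while the paper's unit/counit packaging keeps every verification a statement about explicit components, which is what the \mathlib formalization manipulates; both routes stand or fall with the same hard kernel, which you have identified and set up correctly.
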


A condensed set is completely determined by its values on profinite sets (see \cite[Theorem 7.7]{cat-found-cond}). Profinite sets are those topological spaces which can be written as a cofiltered limit of finite discrete sets. A particular limit formula for a profinite set $S$ is given by writing it as the limit of its discrete quotients, described in more detail in \S\ref{sec:colimit}. When we write $S = \varprojlim_i S_i$, we mean this formula. We can now state the second of our main theorems:

\begin{maintheorem}\label{thm:B} \href{https://github.com/leanprover-community/mathlib4/blob/ffb2f0dbf80ad18fa75f85bd0facee6737b59acf/Mathlib/Condensed/Discrete/Characterization.lean#L74-L104}{\faExternalLink}
    A condensed set $X$ is discrete if and only if for every profinite set $S = \varprojlim_i S_i$, the canonical map $X(S) \to \varinjlim_i X(S_i)$ is an isomorphism.
\end{maintheorem}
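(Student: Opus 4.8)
The plan is to deduce Theorem~\ref{thm:B} from Theorem~\ref{thm:A}, so that the statement becomes an explicit computation with locally constant maps rather than an abstract assertion about constant sheaves. The whole argument rests on one key lemma: for every set $X'$ and every profinite set $S = \varprojlim_i S_i$, the canonical map
\[
    \varinjlim_i \LocConst(S_i, X') \longrightarrow \LocConst(S, X')
\]
is a bijection. Since each $S_i$ is finite and discrete, $\LocConst(S_i, X') = \Hom(S_i, X')$, and an element of the colimit is just a map out of some discrete quotient; the content of the lemma is therefore that \emph{(surjectivity)} every locally constant map $S \to X'$ factors through one of the projections $S \to S_i$, and \emph{(injectivity)} two such factorizations inducing the same map on $S$ already agree after passing to a common later quotient. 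Surjectivity is where the real work lies: a locally constant map on the compact space $S$ has finite image, and its fibers give a finite clopen partition of $S$; one then uses that every clopen subset of $S = \varprojlim_i S_i$ is pulled back from some $S_i$ to refine this partition so that the whole map descends to a single discrete quotient. This is the main obstacle, and it is precisely the point where the concrete description of a profinite set as the limit of its discrete quotients (\S\ref{sec:colimit}) enters.

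For the forward direction, suppose $X$ is discrete. By Theorem~\ref{thm:A} we may assume $X = \LocConst(-, X')$ for some set $X'$, and then the canonical map $X(S) \to \varinjlim_i X(S_i)$ is exactly the inverse of the bijection of the key lemma, hence an isomorphism.

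For the converse, suppose $X(S) \to \varinjlim_i X(S_i)$ is an isomorphism for every profinite $S = \varprojlim_i S_i$. I would prove $X$ discrete by showing that the counit
\[
    \epsilon_X \colon \LocConst(-, X(*)) = L(U(X)) \longrightarrow X
\]
of the adjunction $L \dashv U$ is an isomorphism, which places $X$ in the essential image of $L$ and hence makes it discrete by Theorem~\ref{thm:A}. Since a condensed set is determined by its values on profinite sets, it suffices to check that $\epsilon_{X,S}$ is a bijection for each profinite $S = \varprojlim_i S_i$. Here I would compare two colimit presentations: on one hand the key lemma (applied with $X' = X(*)$) identifies $\LocConst(S, X(*))$ with $\varinjlim_i \LocConst(S_i, X(*))$, and on the other hand the hypothesis identifies $X(S)$ with $\varinjlim_i X(S_i)$. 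Because each $S_i$ is a finite coproduct of points and $X$ preserves finite products (condition~1), the stagewise map $\epsilon_{X,S_i}\colon \LocConst(S_i, X(*)) = \Hom(S_i, X(*)) \cong \prod_{s \in S_i} X(*) \to X(S_i)$ is an isomorphism for every $i$. Naturality of $\epsilon$ makes the square relating the two presentations commute, so $\epsilon_{X,S}$ is a filtered colimit of isomorphisms and is therefore itself an isomorphism, which completes the argument.
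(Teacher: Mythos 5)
Your proposal is correct, but your handling of the hard (backward) direction takes a genuinely different route from the paper's. The forward direction is essentially the same in both: via Theorem~\ref{thm:A} discreteness reduces to the statement that a locally constant map out of $S$ factors, compatibly, through the discrete quotients $S_i$ --- your key lemma is exactly the paper's combination of Lemma~\ref{lemma:factor_through} with surjectivity of the projections (for the canonical diagram it is even immediate, since the fibres of a locally constant map themselves form a discrete quotient). For the converse, the paper explicitly flags that pointwise isomorphisms $X(S) \cong \LocConst(S, X(*))$ are not enough, and it manufactures naturality in $S$ through left Kan extensions: it proves the functor $\dq(S) \to S/\iota$ is initial and assembles the chain $X \cong \Lan_{\iota^{\op}}(X \circ \iota^{\op}) \cong \Lan_{\iota^{\op}}(\LocConst(-,X(*)) \circ \iota^{\op}) \cong \LocConst(-,X(*))$. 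You sidestep that problem by working with a map that is already natural --- the counit $\epsilon_X$ of the adjunction $L \dashv U$ underlying Theorem~\ref{thm:A} --- and reducing to pointwise bijectivity, which you get by comparing the two filtered colimit presentations (key lemma versus hypothesis), using the stagewise isomorphisms supplied by finite product preservation and the naturality squares at the projections $\pi_i$. Both proofs thus rest on the same two ingredients (factorization through discrete quotients, and identification of $X$ with $\LocConst(-,X(*))$ on finite sets) and differ only in how naturality in $S$ is organized: your route is more elementary, avoids Kan extensions entirely, and lines up with conditions (3)--(4) of Theorem~\ref{thm:summary}; the paper's route is heavier but produces reusable general statements (the extension isomorphism of Construction~\ref{const:1st3rd} for an arbitrary presheaf satisfying the colimit condition, and the initiality result), which is what the formalization exploits. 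One point you should make explicit: the claim that $\epsilon_{X,S_i}$ is an isomorphism for finite discrete $S_i$ requires unwinding Construction~\ref{const:counit} to check that, under the decomposition $X(S_i) \cong \prod_{s \in S_i} X(*)$, the counit sends $f$ to the tuple $(f(s))_{s \in S_i}$; this is routine, but it is the step tying your abstract colimit comparison to the concrete counit.
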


The definition of a discrete condensed set can be extended to condensed $R$-modules in an obvious way, leading to the third and final main theorem:

\begin{maintheorem}\label{thm:C} \href{https://github.com/leanprover-community/mathlib4/blob/ffb2f0dbf80ad18fa75f85bd0facee6737b59acf/Mathlib/Condensed/Discrete/Characterization.lean#L112-L115}{\faExternalLink}
    Let $R$ be a ring. A condensed $R$-module is discrete (i.e. constant as a sheaf) if and only if its underlying condensed set is discrete.
\end{maintheorem}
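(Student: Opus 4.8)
The plan is to reduce the statement to the fact that both notions of discreteness are detected by the counit of a constant-sheaf adjunction, and then to transport this counit along the forgetful functor. Write $U$ for the forgetful functor $\CondMod \to \CondSet$ (postcomposition with the underlying-set functor $\Mod_R \to \Set$), and $\underline{(-)}$ for the constant sheaf functor in each of the two settings. Recall that $\underline{(-)}$ is left adjoint to evaluation at a point $\Gamma = (-)(*)$, and that it is fully faithful, since the unit $M' \to \Gamma\,\underline{M'}$ is an isomorphism (the global sections of a constant sheaf recover the original object). Consequently, for either condensed sets or condensed $R$-modules, an object is discrete if and only if the counit $\epsilon_M \colon \underline{M(*)} \to M$ of this adjunction is an isomorphism.

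The central point I would establish is the commuting square $U \circ \underline{(-)}_{\Mod_R} \cong \underline{(-)}_{\Set} \circ U$, i.e. that the underlying condensed set of a constant sheaf of $R$-modules is the constant sheaf on the underlying set. The slick way to see this is through the locally constant description: by Theorem~\ref{thm:A} and its evident module analogue, $\underline{(-)} \cong \LocConst(-,-)$ in both settings, and on the nose $U\,\LocConst(-, M') = \LocConst(-, U M')$, because the underlying set of the $R$-module of locally constant maps $S \to M'$ is exactly the set of locally constant maps $S \to UM'$. (Alternatively, one argues directly that $U$ commutes with sheafification, using that $\Mod_R \to \Set$ preserves finite limits and filtered colimits, hence the colimits of limits computing the coherent sheafification.) Applying this square to $M' = M(*)$ and using that evaluation at a point commutes with the forgetful functors, I would then identify $U(\epsilon_M)$ with the set-level counit $\epsilon_{UM}$.

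The forward implication is now immediate: if $M$ is discrete then $M \cong \underline{M(*)}$, so $UM \cong \underline{U(M(*))}$ is a constant sheaf, hence discrete. For the converse I would use that $U \colon \CondMod \to \CondSet$ is conservative --- a morphism of condensed modules is an isomorphism iff it is one levelwise, which may be checked on underlying sets, where a bijective module homomorphism is already an isomorphism. So if $UM$ is discrete then $\epsilon_{UM}$ is an isomorphism; since this agrees with $U(\epsilon_M)$ and $U$ reflects isomorphisms, $\epsilon_M$ is an isomorphism, whence $M$ is discrete.

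The main obstacle is the commuting square of the second paragraph. Everything else is formal: the counit characterization of the essential image of a fully faithful left adjoint, the conservativity of $U$, and the compatibility of the two evaluation functors are all routine. The genuine content is that forming the underlying condensed set commutes with the constant-sheaf (equivalently, sheafification) construction --- the verification that sheafification of module-valued presheaves is computed on underlying sets, resting on the underlying-set functor preserving the finite limits and filtered colimits appearing in the explicit description of sheafification for the coherent topology. Notably, this route never requires re-proving Theorem~\ref{thm:B} for modules.
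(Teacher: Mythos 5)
Your skeleton coincides with the paper's own proof: detect discreteness via the counit of the constant sheaf adjunction (Proposition~\ref{prop:isDiscrete_iff_mem_essImage}), establish a natural isomorphism $U \circ \underline{(-)}_{\Mod_R} \cong \underline{(-)}_{\Set} \circ U$, use it to identify $U(\epsilon_M)$ with $\epsilon_{UM}$ (this is Lemma~\ref{lemma:constantSheafAdj_counit_w}), and conclude by conservativity of the sheaf-level forgetful functor (Proposition~\ref{prop:reflectsIso_etc}); that is exactly how Proposition~\ref{prop:isDiscrete_iff_forget} is proved and then applied in \S\ref{sec:modules}. However, two steps that you treat as routine or evident are precisely where the content of \S\ref{sec:modules} lies, and as written your justifications for them are circular.

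First, full faithfulness of $\underline{(-)} \colon \Mod_R \to \CondMod_R$ is not free: your reason --- the unit $M' \to \underline{M'}(*)$ is an isomorphism because ``the global sections of a constant sheaf recover the original object'' --- merely restates the claim, since a priori sheafification of the constant presheaf could change the value at $*$. The paper spends most of \S\ref{sec:modules} on exactly this point (Lemma~\ref{lemma:LModuleFF}, via Construction~\ref{const:constantSheafModuleIso}), deducing it from the set-level case. Second, your preferred proof of the commuting square, ``by Theorem~\ref{thm:A} and its evident module analogue'', is backwards: the module analogue of Theorem~\ref{thm:A} is not evident --- proving directly that $\LocConst(-,M)$ is a left adjoint in the module setting would mean redoing \S\ref{sec:functors} with $R$-linearity checks, the ``bad approach'' the paper explicitly avoids --- and in the paper that statement is deduced \emph{from} the square (Construction~\ref{const:constantSheafModuleIso}), not used to prove it. Both gaps are repairable with tools you already name: promote your parenthetical alternative to the primary argument, i.e.\ the square holds because the forgetful functor $\Mod_R \to \Set$ preserves sheafification, as it preserves limits and filtered colimits and reflects isomorphisms (Propositions~\ref{prop:preservesSheafification_concrete} and~\ref{prop:preservesSheafification_diagram_iso}); then either deduce module-level full faithfulness from the set case by applying the conservative functor $\Mod_R \to \Set$ to the unit, or rearrange so that it is never needed: for the forward direction, $M \cong \underline{N}$ gives $UM \cong U\underline{N} \cong \underline{N_0}$ directly (where $N_0$ is the underlying set of $N$), and your converse uses the counit criterion only on the set side, where full faithfulness is already known from \S\ref{sec:functors}.
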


\subsection{Application}\label{subsec:application}
The colimit characterization of discrete condensed sets given by Theorem~\ref{thm:B} (or more precisely, its analogue for condensed abelian groups, which follows from Theorems~\ref{thm:B} and~\ref{thm:C} together) is used when setting up the theory of \emph{solid abelian groups}. The results presented in this subsection have not yet been formalized, but a formalization is well within reach and is currently only blocked by the fact that the closed symmetric monoidal structure on sheaf categories has not been completely formalized yet (this is work in progress by the author and Joël Riou).

Solid abelian groups are condensed analogues of complete topological groups. We say that a condensed abelian group $A$ is \emph{solid} \href{https://github.com/leanprover-community/mathlib4/blob/ffb2f0dbf80ad18fa75f85bd0facee6737b59acf/Mathlib/Condensed/Solid.lean#L72-L74}{\faExternalLink} if for every profinite set $T = \varprojlim_i T_i$, the natural map
\[
    \Z[T] \to \varprojlim_i Z[T_i]
\]
induces a bijection
\[
    \Hom\left(\varprojlim_i \Z[T_i], A\right) \to \Hom\left(\Z[T], A\right).
\]
We denote by $\Z[T]^\solid$ the condensed abelian group $\varprojlim_i Z[T_i]$. These are compact projective generators of the category of solid abelian groups. To get the theory off the ground, one needs to prove that this category has some nice properties (described by \cite[Theorem 5.8]{condensed}), which relies on a structural result \cite[Corollary 5.5]{condensed} about these building blocks. That result says that for every profinite set $T$, there is a set $I$ and an isomorphism 
\[
    \Z[T]^\solid \cong \prod_{i \in I} \Z.
\]
To prove this, one provides an isomorphism 
\[
    \Z[T]^\solid \cong \underline{\Hom}\left(\underline{\Hom}\left(\Z[T], \Z\right), \Z\right)
\]
and then proves that the internal hom $\underline{\Hom}\left(\Z[T], \Z\right)$ is a discrete condensed abelian group, using the colimit characterization\footnote{This isomorphism, and the proof that the internal hom is discrete, is explained in an unpublished note by the author, see \cite[2.4-2.7]{solid-spectra}}. This means that it is simply given by the abelian group of continuous maps $C(T, \Z)$, and thus Nöbeling's theorem \cite[Theorem 5.4]{condensed} \href{https://github.com/leanprover-community/mathlib4/blob/ffb2f0dbf80ad18fa75f85bd0facee6737b59acf/Mathlib/Topology/Category/Profinite/Nobeling.lean#L1834-L1838}{\faExternalLink}, which was formalized by the author in \cite{nobeling}, applies. Nöbeling's theorem gives an isomorphism
\[
    \underline{\Hom}\left(\Z[T], \Z\right) \cong \bigoplus_{i\in I} \Z,
\]
and the desired isomorphism 
\[
    \Z[T]^\solid \cong \underline{\Hom}\left(\bigoplus_{i\in I} \Z, \Z\right) \cong \prod_{i \in I} \Z
\]
follows.

\subsection{Terminology and notation}
Throughout the paper, we use the same terminology as \mathlib. In particular, for an overview of the terminology surrounding Grothendieck topologies and sheaf theory that \mathlib (and this paper) uses, we refer to \cite[\S 3.1]{cat-found-cond}. We will need the notion of initial and final functors. Sometimes the latter is referred to as \emph{cofinal}, which is unfortunate because one might think that \emph{cofinal} refers to the dual of \emph{final}. We explain precisely what we mean by these in \S\ref{subsec:initial-final}.

\subsection{Outline}
We prove Theorem~\ref{thm:A} in \S\ref{sec:functors}, Theorem~\ref{thm:B} in \S\ref{sec:colimit}, and Theorem~\ref{thm:C} in \S\ref{sec:modules}. The general categorical setup to prove Theorem~\ref{thm:C} is in \S\ref{sec:constant}. We gather a few generalities in category theory which are perhaps not completely standard in \S\ref{sec:adjunctions}. We discuss some set theoretic issues which are important to consider when setting up the formalized theory of condensed mathematics in \S\ref{sec:size}. \textsc{Mathlib} has an unusual approach to sheafification; this is described in \S\ref{sec:sheafification}. Finally, we tie together the main results as Theorems~\ref{thm:summary} and~\ref{thm:summary_mod} in \S\ref{sec:summary}, giving a characterization of discrete condensed sets and discrete condensed modules over any ring.

\section{Preliminaries}
\subsection{Generalities in category theory}\label{sec:adjunctions}
\subsubsection{Adjunctions}
We prove two useful results about adjunctions. The results in this subsection were formalized by the author as part of the prerequisites for this project. 
Let $\C, \D$ be categories and 
\[\begin{tikzcd}
  \C & \D
  \arrow[""{name=0, anchor=center, inner sep=0}, "L", curve={height=-6pt}, from=1-1, to=1-2]
  \arrow[""{name=1, anchor=center, inner sep=0}, "R", curve={height=-6pt}, from=1-2, to=1-1]
  \arrow["\dashv"{anchor=center, rotate=-90}, draw=none, from=0, to=1]
\end{tikzcd}\]
a pair of adjoint functors.

\begin{proposition}\label{prop:adjunction-generality-1} \href{https://github.com/leanprover-community/mathlib4/blob/ffb2f0dbf80ad18fa75f85bd0facee6737b59acf/Mathlib/CategoryTheory/Adjunction/FullyFaithful.lean#L187-L194}{\faExternalLink}
  Suppose that $L$ is fully faithful and let $X$ be an object of $\D$. Then $X$ is in the essential image of $L$ if and only if the counit induces an isomorphism $L(R(X)) \to X$. 
\end{proposition}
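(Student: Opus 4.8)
The plan is to prove both implications, with the reverse direction being essentially immediate and the forward direction resting on a standard fact about fully faithful left adjoints. For the easy direction, suppose the counit $\epsilon_X \colon L(R(X)) \to X$ is an isomorphism. Then taking $X' = R(X)$ exhibits an isomorphism $L(X') \cong X$, so $X$ lies in the essential image of $L$ by definition.

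For the converse, suppose $X$ is in the essential image of $L$, so that we are given an object $C$ of $\C$ together with an isomorphism $f \colon L(C) \to X$. The key input is the standard fact that, because $L$ is fully faithful, the unit $\eta \colon \mathrm{id}_{\C} \Rightarrow R L$ of the adjunction is a natural isomorphism. First I would use this, together with the triangle identity
\[
  \epsilon_{L(C)} \circ L(\eta_C) = \mathrm{id}_{L(C)},
\]
to conclude that $\epsilon_{L(C)}$ is an isomorphism: since $\eta_C$ is an isomorphism and functors preserve isomorphisms, $L(\eta_C)$ is an isomorphism, and the displayed identity then forces $\epsilon_{L(C)} = L(\eta_C)^{-1}$.

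Finally, I would transfer invertibility of $\epsilon_{L(C)}$ to $\epsilon_X$ using naturality of the counit along $f$. Naturality of $\epsilon$ applied to the morphism $f \colon L(C) \to X$ gives the relation $f \circ \epsilon_{L(C)} = \epsilon_X \circ L(R(f))$. Since $f$ is an isomorphism, $L(R(f))$ is an isomorphism (as $R$ and $L$ preserve isomorphisms), and $\epsilon_{L(C)}$ is an isomorphism by the previous step, solving for $\epsilon_X$ yields
\[
  \epsilon_X = f \circ \epsilon_{L(C)} \circ L(R(f))^{-1},
\]
a composite of isomorphisms, hence itself an isomorphism.

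The only nontrivial ingredient is the fact that a fully faithful left adjoint has invertible unit; once that is in hand, everything reduces to a formal diagram chase, so I do not expect a genuine obstacle beyond bookkeeping the two triangle and naturality squares in the right order.
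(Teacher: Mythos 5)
Your proof is correct and follows essentially the same route as the paper's: the backward direction is immediate, and for the forward direction both arguments combine the triangle identity $\epsilon_{L(C)} \circ L(\eta_C) = \mathrm{id}$ with the standard fact that full faithfulness of $L$ makes the unit invertible, then transfer invertibility of $\epsilon_{L(C)}$ to $\epsilon_X$ via naturality of the counit along the isomorphism $L(C) \cong X$. Your write-up just makes the bookkeeping (which the paper leaves implicit) explicit.
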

\begin{proof}
  The backward direction is obvious. For the forward direction, pick an isomorphism of the form $X \cong L(Y)$. Then the unit gives a one-sided inverse to the counit $L(R(L(Y))) \to L(Y)$ by one of the triangle identities. Since the left adjoint is fully faithful, the unit is an isomorphism and hence a two-sided inverse. We conclude that the counit induces an isomorphism $L(R(L(Y))) \to L(Y)$, and since the counit is a natural transformation, and there is an isomorphism $X \cong L(Y)$, this means it also induces an isomorphism $L(R(X)) \to X$.
\end{proof}

\begin{proposition}\label{prop:adjunction-generality-2} \href{https://github.com/leanprover-community/mathlib4/blob/ffb2f0dbf80ad18fa75f85bd0facee6737b59acf/Mathlib/CategoryTheory/Monad/Adjunction.lean#L103-L104}{\faExternalLink}
  If there exists a natural isomorphism $R \circ L \cong \operatorname{Id}$, then the unit $\operatorname{Id} \to R \circ L$ is an isomorphism.
\end{proposition}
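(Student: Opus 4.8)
The plan is to reduce the claim to a statement about the monad induced by the adjunction, and then to exploit the fact that a natural endomorphism of the identity functor is ``central''. Write $\eta \colon \operatorname{Id} \to R \circ L$ and $\epsilon \colon L \circ R \to \operatorname{Id}$ for the unit and counit, and form the monad $T = R \circ L$ with multiplication $\mu = R\epsilon L \colon T \circ T \to T$; recall the unit identities $\mu \circ \eta T = \operatorname{id}_T$ and $\mu \circ T\eta = \operatorname{id}_T$. The hypothesis supplies a natural isomorphism $\alpha \colon T \to \operatorname{Id}$, and the key point to keep in mind is that $\alpha$ is \emph{a priori} unrelated to $\eta$. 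I would set $\beta := \alpha \circ \eta \colon \operatorname{Id} \to \operatorname{Id}$. Since $\alpha$ is an isomorphism, $\eta = \alpha^{-1}\circ\beta$, so it suffices to prove that $\beta$ is an isomorphism.

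First I would record the centrality principle that does the real work. As a natural transformation between identity functors, $\beta$ satisfies $f \circ \beta_X = \beta_Y \circ f$ for every morphism $f \colon X \to Y$; in particular $\beta_X$ commutes with every endomorphism of $X$. Consequently, if $\beta_X$ admits even a one-sided inverse $s_X \colon X \to X$ with $s_X \circ \beta_X = \operatorname{id}_X$, then applying centrality to $f = s_X$ yields $\beta_X \circ s_X = s_X \circ \beta_X = \operatorname{id}_X$, so $\beta_X$ is automatically an isomorphism. Thus the whole problem is reduced to producing, for each object $X$, a left inverse of $\beta_X$.

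This is where the monad structure enters, and where I would transport $\mu$ along $\alpha$. A preliminary observation is that $\alpha_{TX} = T(\alpha_X)$, which follows by applying naturality of $\alpha$ to the morphism $\alpha_X \colon TX \to X$ and cancelling the isomorphism $\alpha_X$. I would then take as candidate left inverse the transported multiplication
\[
    s_X := \alpha_X \circ \mu_X \circ T(\alpha_X^{-1}) \circ \alpha_X^{-1} \colon X \to X.
\]
A short computation verifies $s_X \circ \beta_X = \operatorname{id}_X$: after cancelling $\alpha_X^{-1}\circ\alpha_X$, one rewrites $T(\alpha_X^{-1}) \circ \eta_X = \eta_{TX} \circ \alpha_X^{-1}$ using naturality of $\eta$, and then the monad unit identity $\mu_X \circ \eta_{TX} = \operatorname{id}_{TX}$ collapses the middle, leaving $\alpha_X \circ \alpha_X^{-1} = \operatorname{id}_X$. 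By the previous paragraph this forces $\beta_X$, and hence $\eta_X$, to be an isomorphism.

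The main obstacle is precisely the lack of compatibility between $\alpha$ and the adjunction: one cannot simply declare $\alpha$ to be the inverse of the unit, since that is the conclusion and the argument would be circular. The genuine content is that a left inverse of the unit must be manufactured from the monad unit law $\mu \circ \eta T = \operatorname{id}$ (via the transported multiplication), after which the centrality of natural endomorphisms of $\operatorname{Id}$ upgrades it for free to a two-sided inverse. In the formalization I would expect the transported multiplication to be available through \mathlib's existing monad-transport API, so that the remaining work is the short identity-chase above.
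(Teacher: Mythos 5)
Your proposal is correct and takes essentially the same route as the paper: your $\beta = \alpha\circ\eta$ and $s_X = \alpha_X\circ\mu_X\circ T(\alpha_X^{-1})\circ\alpha_X^{-1}$ are exactly the unit and multiplication of the monad on $\operatorname{Id}$ obtained by transporting the adjunction monad along $\alpha$, so your identity-chase is the transported unit law, and your centrality argument is precisely the paper's appeal to the fact that any monad on the identity functor is commutative. Indeed, the inverse you produce, $s\circ\alpha$, coincides with the paper's formula $\mu\circ i$ for the inverse of the unit.
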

\begin{proof}
  Let $i : R \circ L \to \operatorname{Id}$ be an isomorphism. Let $(\operatorname{Id}, \eta, \mu)$ denote the monad obtained by transporting \href{https://github.com/leanprover-community/mathlib4/blob/ffb2f0dbf80ad18fa75f85bd0facee6737b59acf/Mathlib/CategoryTheory/Monad/Basic.lean#L301-L326}{\faExternalLink} the monad on $R \circ L$ induced by the adjunction, along the isomorphism $i$. The inverse of the unit of the adjunction is then given by $\mu \circ i$. The fact that this is an inverse follows from the coherence conditions of the monad and the fact that any monad on the identity functor is commutative. 
\end{proof}

For a more detailed proof of Proposition~\ref{prop:adjunction-generality-2}, we refer to \mathlib. This same proof idea is described in \cite[Lemma A.1.1.1]{elephant}

\begin{corollary}\label{cor:adjunction-generality} \href{https://github.com/leanprover-community/mathlib4/blob/ffb2f0dbf80ad18fa75f85bd0facee6737b59acf/Mathlib/CategoryTheory/Monad/Adjunction.lean#L110-L112}{\faExternalLink}
  If there exists a natural isomorphism $R \circ L \cong \operatorname{Id}$, then $L$ is fully faithful. 
\end{corollary}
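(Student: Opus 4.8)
The plan is to reduce everything to Proposition~\ref{prop:adjunction-generality-2} together with the classical fact that a left adjoint is fully faithful exactly when its unit is an isomorphism. First I would invoke Proposition~\ref{prop:adjunction-generality-2}: the hypothesis $R \circ L \cong \operatorname{Id}$ guarantees that the unit $\eta \colon \operatorname{Id} \to R \circ L$ of the adjunction $L \dashv R$ is an isomorphism. Thus the corollary follows once we know that an adjunction with invertible unit has a fully faithful left adjoint.

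To establish that last implication I would argue at the level of hom-sets. Fixing objects $A, B$ of $\C$, the functor $L$ induces a map $\Hom_\C(A,B) \to \Hom_\D(LA, LB)$, $f \mapsto Lf$, and I want to show it is a bijection. Composing with the adjunction bijection $\Hom_\D(LA, LB) \cong \Hom_\C(A, RLB)$ and using naturality of the unit, the image of $f$ under the composite is $\eta_B \circ f$; that is, the composite is postcomposition by $\eta_B$. Since $\eta_B$ is an isomorphism, this composite is a bijection, and hence so is $f \mapsto Lf$. As $A$ and $B$ were arbitrary, $L$ is fully faithful.

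I do not expect a genuine obstacle here: the real work is packaged into Proposition~\ref{prop:adjunction-generality-2}, and the remainder is the standard unit-criterion for full faithfulness of a left adjoint. The one step deserving a little attention is the identification of the image of $Lf$ under the hom-set bijection with $\eta_B \circ f$, which is precisely the naturality square for $\eta$ applied to $f$ (equivalently, the compatibility of the adjunction isomorphism with the unit).
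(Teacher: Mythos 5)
Your proposal is correct and follows the same route the paper intends: the corollary is stated without proof precisely because it is meant to follow from Proposition~\ref{prop:adjunction-generality-2} combined with the standard fact that a left adjoint whose unit is an isomorphism is fully faithful. Your hom-set argument (identifying the composite of $f \mapsto Lf$ with the adjunction bijection as postcomposition by $\eta_B$) is a correct proof of that standard fact, which the paper simply takes as known.
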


\subsubsection{Initial and final functors}\label{subsec:initial-final}
The results in this subsection were not formalized by the author and were already in \mathlib before the start of this project. The purpose of stating them here is to clarify the terminology. We omit all proofs and refer to the \Lean code for details.

\begin{definition}\label{def:initial-final} \href{https://github.com/leanprover-community/mathlib4/blob/ffb2f0dbf80ad18fa75f85bd0facee6737b59acf/Mathlib/CategoryTheory/Limits/Final.lean#L96-L97}{\faExternalLink} \href{https://github.com/leanprover-community/mathlib4/blob/ffb2f0dbf80ad18fa75f85bd0facee6737b59acf/Mathlib/CategoryTheory/Limits/Final.lean#L88-L89}{\faExternalLink}
  Let $\C$ and $\D$ be categories, and let $F : \C \to \D$ be a functor. 
  \begin{itemize}
    \item $F$ is \emph{initial} if the comma category $F/\D$ (whose objects are morphisms of the form $F(X) \to Y$ with $X \in \C$ and $Y \in \D$) is connected. 
    \item $F$ is \emph{final} if the comma category $\D/F$ (whose objects are morphisms of the form $Y \to F(X)$ with $X \in \C$ and $Y \in \D$) is connected. 
  \end{itemize}
\end{definition}

\begin{proposition}\label{prop:initial-final-op} \href{https://github.com/leanprover-community/mathlib4/blob/ffb2f0dbf80ad18fa75f85bd0facee6737b59acf/Mathlib/CategoryTheory/Limits/Final.lean#L101-L117}{\faExternalLink}
  A functor is initial if and only if its opposite is final, and vice versa. 
\end{proposition}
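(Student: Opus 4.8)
The plan is to unwind Definition~\ref{def:initial-final} on both sides and reduce everything to the single observation that connectedness of a category is a self-dual property. First I would record what finality of the opposite functor means. Given $F : \C \to \D$, its opposite is $F^{\op} : \C^{\op} \to \D^{\op}$, and by definition $F^{\op}$ is final exactly when the comma category $\D^{\op}/F^{\op}$ is connected; its objects are the morphisms $Y \to F^{\op}(X)$ in $\D^{\op}$ with $X \in \C^{\op}$ and $Y \in \D^{\op}$.

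The key step is to identify this comma category with the opposite of the one appearing in the definition of initiality of $F$. A morphism $Y \to F^{\op}(X)$ in $\D^{\op}$ is by definition a morphism $F(X) \to Y$ in $\D$, so the objects of $\D^{\op}/F^{\op}$ are literally the objects of $F/\D$. Tracing through the morphisms of the two comma categories shows that they are reversed, so that these assignments assemble into an isomorphism of categories
\[
  \D^{\op}/F^{\op} \;\cong\; \left(F/\D\right)^{\op}.
\]
I would prove this by exhibiting the evident functor in each direction and checking that they are mutually inverse; this is routine but is the step that requires the most care, since one must keep the two layers of ``opposite'' (the one on $\D$ and the one on the comma category itself) from colliding.

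Finally I would invoke the fact that a category is connected if and only if its opposite is connected: connectedness means nonempty together with the property that any two objects are joined by a finite zigzag of morphisms, and reversing every arrow preserves both nonemptiness and the existence of such zigzags. Combined with the isomorphism above, $\D^{\op}/F^{\op}$ is connected if and only if $\left(F/\D\right)^{\op}$ is, if and only if $F/\D$ is; that is, $F^{\op}$ is final if and only if $F$ is initial. The ``vice versa'' statement---that $F$ is final if and only if $F^{\op}$ is initial---follows by the symmetric argument, or formally by applying the first equivalence to $F^{\op}$ together with the identification $\left(F^{\op}\right)^{\op} = F$. The main obstacle here is purely organizational: getting the comma-category identification, and especially the direction of its morphisms, exactly right.
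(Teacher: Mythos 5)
Your proof is correct. Note, however, that the paper itself contains no proof of this proposition: the surrounding subsection explicitly states that all proofs are omitted and defers to the \Lean code in \mathlib. Your argument --- identifying $\D^{\op}/F^{\op}$ with $\left(F/\D\right)^{\op}$ by unwinding both layers of ``op'', and then invoking the self-duality of connectedness --- is precisely the argument carried out in that formalization (an equivalence of comma categories composed with the fact that a category is connected if and only if its opposite is), so in substance you have reconstructed the intended proof. One point worth flagging: the definition actually used in \mathlib (and the standard one) is slicewise --- $F$ is initial when the comma category $F/Y$ is connected for \emph{every fixed} $Y \in \D$, not when the total comma category $F/\D$ is connected; the paper's wording of Definition~\ref{def:initial-final} is loose here, and as literally stated the two conditions differ. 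You followed the paper's literal wording, but your argument adapts verbatim to the slicewise definition: for each $Y$ one has the isomorphism $Y/F^{\op} \cong \left(F/Y\right)^{\op}$, established with exactly the same bookkeeping of morphism directions, and connectedness of each slice is again self-dual, so the conclusion (including the ``vice versa'' obtained by applying the equivalence to $F^{\op}$) stands.
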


\begin{proposition}\label{prop:initial-final-limit} \href{https://github.com/leanprover-community/mathlib4/blob/ffb2f0dbf80ad18fa75f85bd0facee6737b59acf/Mathlib/CategoryTheory/Limits/Final.lean#L585-L592}{\faExternalLink} \href{https://github.com/leanprover-community/mathlib4/blob/ffb2f0dbf80ad18fa75f85bd0facee6737b59acf/Mathlib/CategoryTheory/Limits/Final.lean#L619-L620}{\faExternalLink} \href{https://github.com/leanprover-community/mathlib4/blob/ffb2f0dbf80ad18fa75f85bd0facee6737b59acf/Mathlib/CategoryTheory/Limits/Final.lean#L302-L309}{\faExternalLink} \href{https://github.com/leanprover-community/mathlib4/blob/ffb2f0dbf80ad18fa75f85bd0facee6737b59acf/Mathlib/CategoryTheory/Limits/Final.lean#L336-L337}{\faExternalLink}
  Let $\E$ be a category and $G : \D \to \E$ be a functor. Then
  \begin{itemize}
    \item If $F$ is initial, then $G$ has a limit if and only if $G \circ F$ has a limit. In this case, the canonical map 
    \[\lim G \to \lim (G \circ F)\]
    is an isomorphism.
    \item If $F$ is final, then $G$ has a colimit if and only if $G \circ F$ has a colimit. In this case, the canonical map 
    \[\colim (G \circ F) \to \colim G \]
    is an isomorphism.
  \end{itemize}
\end{proposition}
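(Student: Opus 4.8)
The plan is to prove the statement for final functors and colimits directly, and then deduce the statement for initial functors and limits by duality. Indeed, a limit of $G$ in $\E$ is the same thing as a colimit of the opposite functor $G^{\op} : \D^{\op} \to \E^{\op}$, and by Proposition~\ref{prop:initial-final-op} the functor $F$ is initial exactly when $F^{\op}$ is final; so once the final case is known, applying it to $F^{\op}$ and $G^{\op}$ yields the initial case, with the comparison map $\lim G \to \lim(G \circ F)$ being the opposite of the comparison $\colim(G^{\op} \circ F^{\op}) \to \colim G^{\op}$.

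So assume $F$ is final and fix $G : \D \to \E$. The key step is to show that restriction along $F$ induces an equivalence between the category of cocones over $G$ and the category of cocones over $G \circ F$. In one direction, a cocone $(C, (\iota_d)_{d \in \D})$ over $G$ restricts to the cocone $(C, (\iota_{F(c)})_{c \in \C})$ over $G \circ F$. For the other direction, given a cocone $(C, (\sigma_c)_{c \in \C})$ over $G \circ F$, I would extend it to a cocone over $G$ as follows: finality of $F$ guarantees that for each object $d$ of $\D$ the comma category of morphisms $d \to F(c)$ (with $c \in \C$) is connected, in particular nonempty, so I may choose an object $(c, g \colon d \to F(c))$ and set $\iota_d := \sigma_c \circ G(g)$.

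The crux is that this value is independent of the chosen object of that comma category, and this is exactly where connectedness is used essentially. Since the comma category is connected, it suffices to check that $\sigma_c \circ G(g)$ is unchanged along a single morphism $(c, g) \to (c', g')$, which is a map $h \colon c \to c'$ in $\C$ satisfying $F(h) \circ g = g'$; then $\sigma_{c'} \circ G(g') = \sigma_{c'} \circ G(F(h)) \circ G(g) = \sigma_c \circ G(g)$, the last equality being the cocone identity for $(\sigma_c)$ applied to $h$. The same well-definedness immediately yields naturality of the extended family in $d$ (given $\phi \colon d \to d'$ and a representative $(c', g')$ for $\iota_{d'}$, the pair $(c', g' \circ \phi)$ represents $\iota_d$, so $\iota_{d'} \circ G(\phi) = \sigma_{c'} \circ G(g' \circ \phi) = \iota_d$), and shows that the two constructions are mutually inverse (extending then restricting recovers $\sigma_c$ by choosing the representative $(c, \mathrm{id}_{F(c)})$, and restricting then extending recovers $\iota_d$ by the cocone identity for $(\iota_d)$). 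Checking these compatibilities carefully is the part I expect to demand the most bookkeeping.

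Finally, a colimit is precisely an initial object in the category of cocones, and an equivalence of categories preserves and reflects initial objects. Hence $G$ admits a colimit if and only if $G \circ F$ does, and under the equivalence their colimit cocones correspond; unwinding the correspondence identifies the canonical comparison map $\colim(G \circ F) \to \colim G$ with the isomorphism induced by the equivalence, as required.
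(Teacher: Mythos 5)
Your proof is correct. The paper itself offers no written proof of this proposition --- it explicitly omits all proofs in this subsection and defers to the \Lean code --- and your argument (extend a cocone over $G \circ F$ to one over $G$ by choosing, for each $d \in \D$, an object $(c, g \colon d \to F(c))$ of the comma category, use connectedness to see the extension is independent of the choice and natural, conclude that restriction is an equivalence of cocone categories, identify colimits as initial cocones, and get the initial/limit case by duality via Proposition~\ref{prop:initial-final-op}) is essentially the standard argument implemented in the \mathlib formalization that the paper cites. One point worth making explicit: your proof correctly uses the per-object form of finality (for \emph{each} $d \in \D$ the comma category $d/F$ is connected, hence nonempty), which is the \mathlib definition; the paper's Definition~\ref{def:initial-final} is worded as if a single global comma category $\D/F$ were required to be connected, but that reading would be too weak to run your construction, so the per-object reading is the intended one and is the hypothesis your argument genuinely needs.
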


\subsection{Set-theoretic issues in condensed mathematics}\label{sec:size}
In sheaf theory, one sometimes needs to be careful about the size of categories involved. For example, the category of sheaves of sets on a large site cannot strictly speaking have sheafification. This is because the process of sheafification involves taking a colimit which is the same size as the defining site. 

This issue arises in condensed mathematics, and can be solved in various ways. Originally it was done in \cite{condensed} by altering the definition of a condensed set slightly. Instead of defining it as a sheaf on the whole site of compact Hausdorff spaces, one defines a $\kappa$-condensed set to be a sheaf on the essentially small site of $\kappa$-small compact Hausdorff spaces, where $\kappa$ is a certain type of cut-off cardinal. Then one takes a colimit of these categories of $\kappa$-condensed sets and defines a condensed set to be an object in this colimit in the category of categories. 

The way this issue is solved in \cite{pyknotic} is to make the sheaves take values in a larger category of sets. So a condensed set is defined as a sheaf of large sets on the site of all small compact Hausdorff spaces. This is the approach we take in \mathlib as well, by taking advantage of the universe hierarchy built in to the type theory of \Lean (for a short explanation of the universe hierarchy, see \cite[\S2.5]{cat-found-cond}; for more details, see \cite[Chapter 2]{tpil}). A \lean{u}-condensed set is defined as a sheaf on the site \lean{CompHaus.\{u\}} of all compact Hausdorff spaces in the universe \lean{u}, valued in the category \lean{Type (u+1)} of types in the universe \lean{u+1} \href{https://github.com/leanprover-community/mathlib4/blob/ffb2f0dbf80ad18fa75f85bd0facee6737b59acf/Mathlib/Condensed/Basic.lean#L42-L43}{\faExternalLink} \href{https://github.com/leanprover-community/mathlib4/blob/ffb2f0dbf80ad18fa75f85bd0facee6737b59acf/Mathlib/Condensed/Basic.lean#L52}{\faExternalLink}:

\begin{lstlisting}
def Condensed (C : Type w) [Category.{v} C] :=
  Sheaf (coherentTopology CompHaus.{u}) C  

abbrev CondensedSet := Condensed.{u} (Type (u+1))
\end{lstlisting}

The definition of a condensed object in a general category \lean{C} has three universe parameters \lean{u}, \lean{v}, and \lean{w} where the latter two are the size of the sets of morphisms and objects of \lean{C} respectively. This is a recurring theme in the category theory library in \mathlib. A \emph{small category} in this context is category whose objects and morphisms live in the same universe \lean{u}. More precisely, the type \lean{C} of objects is a term of \lean{Type u} and for each pair of objects \lean{X Y : C}, the type \lean{X ⟶ Y} of morphisms between them is also a term of \lean{Type u}. In a \emph{large category} the objects form a type of size \lean{u+1} and the morphisms between two objects a type of size \lean{u}.

Many theorems in category theory hold for categories regardless of whether they are small or large. Others require modifications to depending on whether some categories in the statement are small or large. \textsc{Mathlib} strives to avoid code duplication as much as possible, and therefore, theorems in category theory often have assumptions of the form 
\begin{lstlisting}
variable (C : Type u) [Category.{v} C]
\end{lstlisting}
This means that the objects and morphisms of \lean{C} live in completely arbitrary universes, which specializes directly to both small and large categories.

In an informal mathematics text, one might see a definition which would start as follows:

Let $(\C, J)$ be a small site and let $F : \C^{\op} \to \Set$ be a presheaf. The \emph{sheafification} $\hat{F}$ of $F$ is a sheaf of sets for $J$ on $\C$ defined as follows \dots. 

And it would then go on to describe the values of this sheaf as a colimit in $\Set$, which exists because $\C$ was assumed to be small. 

In \mathlib, the approach taken in this situation is instead to depend on
\begin{lstlisting}
variable {C : Type u} [Category.{v} C] (J : GrothendieckTopology C) 
    (F : Cᵒᵖ ⥤ Type (max u v))
\end{lstlisting}
This specializes directly to the case of a small defining site (\lean{[Category.\{u\} C]}), because in that case the target category of the presheaf becomes \lean{Type u}. But it also allows for a more general construction, namely sheafifying presheaves on a large site (\lean{\{C : Type (u+1)\} [Category.\{u\} C]}), by making sure they take values in a category of large enough sets (\lean{Type (u+1)}). This way, sheafification is possible in the setting of \lean{CondensedSet} in \mathlib.


The size issues in condensed mathematics described above can be avoided by modifying the theory slightly. Instead of the category of compact Hausdorff spaces or that of profinite sets, we can take the category of \emph{light profinite sets} \href{https://github.com/leanprover-community/mathlib4/blob/ffb2f0dbf80ad18fa75f85bd0facee6737b59acf/Mathlib/Topology/Category/LightProfinite/Basic.lean#L44-L45}{\faExternalLink} as our defining site. This category consists of those profinite sets that can be written as a sequential limit $\varprojlim_{n \in \N} S_n$ of finite discrete sets \href{https://github.com/leanprover-community/mathlib4/blob/ffb2f0dbf80ad18fa75f85bd0facee6737b59acf/Mathlib/Topology/Category/LightProfinite/Basic.lean#L245-L251}{\faExternalLink}. Equivalently \href{https://github.com/leanprover-community/mathlib4/blob/ffb2f0dbf80ad18fa75f85bd0facee6737b59acf/Mathlib/Topology/Category/LightProfinite/Basic.lean#L336-L349}{\faExternalLink}, it is the category of all second countable, totally disconnected compact Hausdorff spaces (this is the \mathlib definition). This category is essentially small \href{https://github.com/leanprover-community/mathlib4/blob/ffb2f0dbf80ad18fa75f85bd0facee6737b59acf/Mathlib/Topology/Category/LightProfinite/Basic.lean#L404-L406}{\faExternalLink}, so we can define a \emph{light condensed set} as a sheaf of (small) sets on the coherent site of light profinite sets. In \textsc{mathlib} \href{https://github.com/leanprover-community/mathlib4/blob/ffb2f0dbf80ad18fa75f85bd0facee6737b59acf/Mathlib/Condensed/Light/Basic.lean#L25-L26}{\faExternalLink} \href{https://github.com/leanprover-community/mathlib4/blob/ffb2f0dbf80ad18fa75f85bd0facee6737b59acf/Mathlib/Condensed/Light/Basic.lean#L35}{\faExternalLink}:
\begin{lstlisting}
def LightCondensed (C : Type w) [Category.{v} C] :=
  Sheaf (coherentTopology LightProfinite.{u}) C

abbrev LightCondSet := LightCondensed.{u} (Type u)
\end{lstlisting}
A common generalization of condensed and light condensed sets is a term of the type 
\begin{lstlisting}
Sheaf (coherentTopology (CompHausLike.{u} P)) (Type (max u w))
\end{lstlisting}
Here, \lean{P} is a predicate on topological spaces, \lean{CompHausLike.\{u\} P} denotes the category of compact Hausdorff spaces in the universe \lean{u}, satisfying \lean{P} \href{https://github.com/leanprover-community/mathlib4/blob/ffb2f0dbf80ad18fa75f85bd0facee6737b59acf/Mathlib/Topology/Category/CompHausLike/Basic.lean#L72-L80}{\faExternalLink}, and \lean{w} is an auxiliary universe variable. In \mathlib, the categories \lean{CompHaus} and \lean{LightProfinite} are defined as abbreviations for \lean{CompHausLike P} for the relevant predicates \lean{P} \href{https://github.com/leanprover-community/mathlib4/blob/ffb2f0dbf80ad18fa75f85bd0facee6737b59acf/Mathlib/Topology/Category/CompHaus/Basic.lean#L43}{\faExternalLink} \href{https://github.com/leanprover-community/mathlib4/blob/ffb2f0dbf80ad18fa75f85bd0facee6737b59acf/Mathlib/Topology/Category/LightProfinite/Basic.lean#L44-L45}{\faExternalLink}. This is a recent design by the author with the purpose of being able to prove results about categories such as \lean{CompHaus} and \lean{LightProfinite} simultaneously.

All constructions in this paper carry over to the setting of light condensed objects. In the formalization, this is done either in each case separately or by arguing directly about the common generalization described in the previous paragraph. However, for clarity of exposition, we focus in this paper on the case of condensed sets, and let the reader themselves either fill in the details for the translation to light condensed sets, or look at the \Lean code. 

\subsection{Sheafification in mathlib}\label{sec:sheafification}
\textsc{Mathlib} has a somewhat unusual approach to sheafification. In this section we define some terminology which may be non-standard in the literature, but follows \mathlib conventions. Many of these constructions were formalized in part by the author, but Construction~\ref{const:preservesSheafification_diagram} and the material related to preserving sheafification was mostly formalized by Joël Riou.

\begin{definition}\label{def:HasSheafify} \href{https://github.com/leanprover-community/mathlib4/blob/ffb2f0dbf80ad18fa75f85bd0facee6737b59acf/Mathlib/CategoryTheory/Sites/Sheafification.lean#L33}{\faExternalLink} \href{https://github.com/leanprover-community/mathlib4/blob/ffb2f0dbf80ad18fa75f85bd0facee6737b59acf/Mathlib/CategoryTheory/Sites/Sheafification.lean#L42-L44}{\faExternalLink}
    Let $\C$ be a category equipped with a Grothendieck topology $J$ and let $\A$ be a category. 
    \begin{itemize}
        \item We say that $\A$ \emph{has weak sheafification with respect to $J$} if the inclusion functor $\Sh_J(\C, \A) \to \PSh(\C, \A)$ has a left adjoint (called the \emph{sheafification functor}). 
        \item We say that $\A$ \emph{has sheafification with respect to $J$} if it has weak sheafification and the sheafification functor preserves finite limits.
    \end{itemize}
\end{definition}

Previously, \mathlib only contained definitions of sheafification for sheaves valued in concrete categories with a long list of assumptions about existence of certain limits and colimits as well as the forgetful functor preserving these. Moreover, it only worked for presheaves of large sets (or other large concrete categories satisfying the required properties) defined on a large site, or presheaves of small sets defined on a small site, not for presheaves of small sets defined on a large, essentially small site. The type classes \lean{HasWeakSheafify} and \lean{HasSheafify} were added to \mathlib by the author while adding the possibility to sheafify presheaves of small sets defined on a large, essentially small site. This streamlined many files which previously contained said long lists of assumptions, which now instead only contain an assumption of the form \lean{HasWeakSheafify} or \lean{HasSheafify}.

\begin{definition}\label{def:SheafCompose} \href{https://github.com/leanprover-community/mathlib4/blob/ffb2f0dbf80ad18fa75f85bd0facee6737b59acf/Mathlib/CategoryTheory/Sites/Whiskering.lean#L38-L40}{\faExternalLink} \href{https://github.com/leanprover-community/mathlib4/blob/ffb2f0dbf80ad18fa75f85bd0facee6737b59acf/Mathlib/CategoryTheory/Sites/Whiskering.lean#L45-L50}{\faExternalLink}
    Let $\C$ be a category equipped with a Grothendieck topology $J$, let $\A$ and $\B$ be categories and $U : \A \to \B$ a functor. We say that $U$ \emph{has sheaf composition with respect to $J$} if for every $J$-sheaf $F$ with values in $\A$, the presheaf $U \circ F$ is a sheaf. This yields a functor $\Sh_J(\C, \A) \to \Sh_J(\C, \B)$ which we call the \emph{sheaf composition functor associated to $U$}.
\end{definition}

\begin{proposition}\label{prop:reflectsIso_etc} \href{https://github.com/leanprover-community/mathlib4/blob/ffb2f0dbf80ad18fa75f85bd0facee6737b59acf/Mathlib/CategoryTheory/Sites/Whiskering.lean#L58-L69}{\faExternalLink}
    Let $\C, J, \A, \B$ and $U$ be as in Definition~\ref{def:SheafCompose}. Suppose that $U$ has sheaf composition with respect to $J$. Denote by $F_U$ the sheaf composition functor associated to $U$. Then the following holds:
    \begin{enumerate}
        \item If $U$ is faithful, then $F_U$ is faithful.
        \item If $U$ is full and faithful, then $F_U$ is full.
        \item If $U$ reflects isomorphisms, then $F_U$ reflects isomorphisms.
    \end{enumerate}
\end{proposition}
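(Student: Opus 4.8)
The plan is to observe that all three statements concern the behaviour of $F_U$ on morphisms, and that a morphism of sheaves is nothing more than a morphism of the underlying presheaves, i.e.\ a natural transformation, which is determined by and assembled from its components. Write $\iota_\A \colon \Sh_J(\C, \A) \to \PSh(\C, \A)$ and $\iota_\B \colon \Sh_J(\C, \B) \to \PSh(\C, \B)$ for the inclusion functors, which are fully faithful, and let $W_U \colon \PSh(\C, \A) \to \PSh(\C, \B)$ denote the post-composition (whiskering) functor $P \mapsto U \circ P$, which acts on a natural transformation $\alpha$ componentwise by $(W_U \alpha)_c = U(\alpha_c)$. By construction of the sheaf composition functor there is a commuting square $\iota_\B \circ F_U = W_U \circ \iota_\A$. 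Since $\iota_\A$ and $\iota_\B$ are fully faithful, they preserve and reflect isomorphisms, and they identify the morphisms of $\Sh_J(\C, -)$ with the corresponding natural transformations of presheaves. Thus it suffices to establish the three properties for $W_U$ and then transport them across the square.

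First I would handle faithfulness. If $U$ is faithful and $\alpha, \beta \colon F \to G$ satisfy $F_U \alpha = F_U \beta$, then applying $\iota_\B$ gives $W_U(\iota_\A \alpha) = W_U(\iota_\A \beta)$, so $U(\alpha_c) = U(\beta_c)$ for every object $c$; faithfulness of $U$ forces $\alpha_c = \beta_c$ for all $c$, hence $\iota_\A\alpha = \iota_\A\beta$, and therefore $\alpha = \beta$ since $\iota_\A$ is faithful. The statement about reflecting isomorphisms is similar: a natural transformation of presheaves is an isomorphism precisely when each of its components is, so because the inclusions preserve and reflect isomorphisms, $F_U \alpha$ is an isomorphism iff $\iota_\B F_U \alpha = W_U \iota_\A \alpha$ is, iff every component $U(\alpha_c)$ is an isomorphism in $\B$. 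If $U$ reflects isomorphisms this makes every $\alpha_c$ an isomorphism, so $\iota_\A \alpha$ is a componentwise---hence genuine---isomorphism of presheaves, and thus $\alpha$ is an isomorphism of sheaves.

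The fullness statement is the main obstacle, and it is exactly the reason the hypothesis requires $U$ to be full \emph{and} faithful. Given sheaves $F, G$ and a morphism $\delta \colon F_U F \to F_U G$, I would use that $\iota_\B \delta$ is a natural transformation $U \circ \iota_\A F \to U \circ \iota_\A G$ and lift it componentwise: since $U$ is full, for each $c$ there is a morphism $\alpha_c \colon (\iota_\A F)(c) \to (\iota_\A G)(c)$ with $U(\alpha_c) = (\iota_\B\delta)_c$. The delicate point is that the family $(\alpha_c)_c$ is not obviously natural; to verify the naturality square one checks that $U$ applied to both of its sides agrees, using naturality of $\delta$ together with functoriality of $U$, and then cancels $U$ by faithfulness. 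This produces a genuine natural transformation $\beta \colon \iota_\A F \to \iota_\A G$ with $W_U \beta = \iota_\B \delta$. Finally, since $\iota_\A$ is fully faithful, $\beta$ descends to a unique sheaf morphism $\alpha \colon F \to G$ with $\iota_\A \alpha = \beta$; then $\iota_\B F_U \alpha = W_U \iota_\A \alpha = W_U\beta = \iota_\B\delta$, and faithfulness of $\iota_\B$ yields $F_U\alpha = \delta$, establishing fullness. In short, fullness of $U$ supplies the components of the lift while faithfulness of $U$ is what certifies their naturality.
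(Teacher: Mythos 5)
Your proof is correct. The paper gives no written proof of this proposition at all---it defers entirely to the Lean formalization---and your argument (transporting the three properties across the square $\iota_\B \circ F_U = W_U \circ \iota_\A$, with componentwise lifting plus naturality-certified-by-faithfulness for the fullness claim) is precisely the standard route that the \mathlib proof takes via the whiskering functor, so there is nothing to flag.
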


\begin{definition}\label{def:PreservesSheafification} \href{https://github.com/leanprover-community/mathlib4/blob/ffb2f0dbf80ad18fa75f85bd0facee6737b59acf/Mathlib/CategoryTheory/Sites/PreservesSheafification.lean#L57-L58}{\faExternalLink} \href{https://github.com/leanprover-community/mathlib4/blob/ffb2f0dbf80ad18fa75f85bd0facee6737b59acf/Mathlib/CategoryTheory/Sites/PreservesSheafification.lean#L69-L74}{\faExternalLink}
    Let $\C$ be a category equipped with a Grothendieck topology $J$, let $\A$ and $\B$ be categories and $U : \A \to \B$ a functor. Suppose that $A$ and $B$ have weak sheafification with respect to $J$. We say that $U$ \emph{preserves sheafification} if for every morphism $\phi : P_1 \to P_2$ of $\A$-valued presheaves which becomes an isomorphism after sheafification, the whiskering $U\phi$ also becomes an isomorphism after sheafification.\footnote{In \mathlib, this definition is stated slightly differently without assuming that the categories have weak sheafification. In our application, the categories do have sheafification and our definition is equivalent.}
\end{definition}

\begin{construction}\label{const:preservesSheafification_diagram} \href{https://github.com/leanprover-community/mathlib4/blob/ffb2f0dbf80ad18fa75f85bd0facee6737b59acf/Mathlib/CategoryTheory/Sites/PreservesSheafification.lean#L157-L167}{\faExternalLink}
    Let $\C, J, \A, \B$, and $U$ be like in Definition~\ref{def:PreservesSheafification}. Suppose that $U$ has sheaf composition with respect to $J$. Denote by $G_1$ and $G_2$ the sheafification functors associated of $\A$ and $\B$ respectively. Consider the following square of functors:
    \[\begin{tikzcd}
        {\PSh(\C,\A)} & {\PSh(\C, \B)} \\
        {\Sh_J(\C,\A)} & {\Sh_J(\C,\B)}
        \arrow["{U\circ}", from=1-1, to=1-2]
        \arrow["{G_1}"', from=1-1, to=2-1]
        \arrow["\alpha", shorten <=6pt, shorten >=6pt, Rightarrow, from=1-2, to=2-1]
        \arrow["{G_2}", from=1-2, to=2-2]
        \arrow["{F_U}"', from=2-1, to=2-2]
    \end{tikzcd}\]
    where $F_U$ is the sheaf composition functor associated to $U$. We construct a natural transformation $\alpha$ between the two composite functors as indicated in the diagram. Its components consist of natural transformations of functors $(U \circ P)^{sh} \to U \circ P^{sh}$ for each presheaf $P : \C^{\op} \to \A$, where $(-)^{sh}$ denotes sheafification. By adjunction, this is the same as giving a natural transformation $U \circ P \to U \circ P^{sh}$, which is obtained from whiskering $U$ with the unit of the sheafification adjunction applied to $P$. We omit the proof of naturality, and refer to the \Lean code for details.
\end{construction}

\begin{proposition}\label{prop:preservesSheafification_diagram_iso} \href{https://github.com/leanprover-community/mathlib4/blob/ffb2f0dbf80ad18fa75f85bd0facee6737b59acf/Mathlib/CategoryTheory/Sites/PreservesSheafification.lean#L198-L200}{\faExternalLink}
    Let $\C, J, \A, \B$, and $U$ be like in Construction~\ref{const:preservesSheafification_diagram}. Suppose in addition that $U$ preserves sheafification. Then the natural transformation $\alpha$ in Construction~\ref{const:preservesSheafification_diagram} is an isomorphism.
\end{proposition}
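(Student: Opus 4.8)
The plan is to prove that the natural transformation $\alpha$ is an isomorphism by checking that each of its components $\alpha_P \colon (U \circ P)^{sh} \to U \circ P^{sh}$ is an isomorphism, where $P$ ranges over $\A$-valued presheaves and $(-)^{sh}$ denotes sheafification. Throughout I write $\eta$ and $\epsilon$ for the unit and counit of the sheafification adjunction $G \dashv \iota$, with $\iota$ the fully faithful inclusion of sheaves into presheaves (for whichever of $\A$, $\B$ is relevant). Once every $\alpha_P$ is invertible, $\alpha$ is an isomorphism, since isomorphisms of sheaves can be checked componentwise.

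First I would record the standard fact that, for any $\A$-valued presheaf $P$, the unit $\eta_P \colon P \to P^{sh}$ becomes an isomorphism after sheafification. This is a formal consequence of reflectivity: applying $G_1$ to $\eta_P$ and using the triangle identity $\epsilon_{G_1 P} \circ G_1(\eta_P) = \mathrm{id}$, together with the fact that the counit of a reflective adjunction is an isomorphism, shows that $G_1(\eta_P)$ is an isomorphism. Thus $\eta_P$ is precisely a morphism of $\A$-valued presheaves that becomes an isomorphism after sheafification.

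Now the hypothesis that $U$ preserves sheafification (Definition~\ref{def:PreservesSheafification}) applies directly: the whiskered morphism $U\eta_P \colon U \circ P \to U \circ P^{sh}$ also becomes an isomorphism after sheafification, i.e.\ $G_2(U\eta_P)$ is an isomorphism. To conclude, I would unwind Construction~\ref{const:preservesSheafification_diagram}: the component $\alpha_P$ is defined as the adjoint transpose of $U\eta_P$ under $G_2 \dashv \iota$, using that $U \circ P^{sh}$ is a genuine sheaf (because $U$ has sheaf composition and $P^{sh}$ is a sheaf). Explicitly this transpose factors as
\[ \alpha_P = \epsilon_{U \circ P^{sh}} \circ G_2(U\eta_P), \]
and since $U \circ P^{sh}$ is a sheaf, the counit $\epsilon_{U \circ P^{sh}}$ is an isomorphism. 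Being a composite of two isomorphisms, $\alpha_P$ is an isomorphism.

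The main obstacle is not conceptual but bookkeeping: one must match the abstract adjoint-transpose definition of $\alpha_P$ given in Construction~\ref{const:preservesSheafification_diagram} with the explicit factorization $\epsilon_{U \circ P^{sh}} \circ G_2(U\eta_P)$, so that the isomorphism established via preservation of sheafification can actually be transported onto $\alpha_P$. Everything else is a formal manipulation of the triangle identities and the invertibility of the counit on sheaves.
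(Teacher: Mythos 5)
Your proof is correct and takes essentially the same route as the paper: the paper's one-line proof (``this follows from the fact that $U$ preserves sheafification,'' deferring details to the \Lean code) is exactly your argument, namely that the unit $\eta_P$ becomes an isomorphism after sheafification by reflectivity, hence so does $U\eta_P$ by the preservation hypothesis, and the transpose factorization $\alpha_P = \epsilon_{U \circ P^{sh}} \circ G_2(U\eta_P)$ together with invertibility of the counit on the sheaf $U \circ P^{sh}$ (using sheaf composition) gives the result componentwise. Your write-up simply supplies the bookkeeping that the paper leaves to the formalization.
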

\begin{proof}
    This follows from the fact that $U$ preserves sheafification, we refer to the \Lean code for details.
\end{proof}

\begin{proposition}\label{prop:hasSheafify_concrete} 
    Let $\C$ be a large category equipped with a Grothendieck topology $J$. 
    \begin{enumerate}
        \item The category of large sets has sheafification with respect to $J$.
        \item Let $R$ be a ring. The category of large $R$-modules has sheafification with respect to $J$
    \end{enumerate}
    Suppose now that $\C$ is essentially small. 
    \begin{enumerate}
        \item[(3)] The category of small sets has sheafification with respect to $J$.
        \item[(4)] Let $R$ be a ring. The category of small $R$-modules has sheafification with respect to $J$
    \end{enumerate}
\end{proposition}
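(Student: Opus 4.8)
The plan is to verify, in each of the four cases, the two conditions of Definition~\ref{def:HasSheafify}: that the inclusion $\Sh_J(\C,\A) \to \PSh(\C,\A)$ admits a left adjoint, and that this left adjoint preserves finite limits. Both are supplied by the Grothendieck plus construction. For a presheaf $F$ and an object $X$, one sets $F^+(X) = \colim_R M(R,F)$, the colimit taken over the covering sieves $R$ of $X$ ordered by reverse inclusion, where $M(R,F)$ denotes the set of matching families for $R$. The poset of covering sieves of $X$ under reverse inclusion is filtered: any two covering sieves admit a common refinement, namely their intersection, which is again covering by the axioms of a Grothendieck topology. Hence $F^+$ is computed by a filtered colimit, $F^{++}$ is a sheaf, and $F \mapsto F^{++}$ provides the required left adjoint.

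For the second condition, the key observation is that filtered colimits commute with finite limits in $\Set$ (and, since the forgetful functor $\Mod_R \to \Set$ creates both finite limits and filtered colimits, in $\Mod_R$ as well). Finite limits in presheaf and sheaf categories are computed objectwise, so it suffices to check that at each $X$ the functor $F \mapsto F^+(X)$ preserves them; this follows from the filteredness of the indexing poset together with the fact that passing to matching families is itself left exact, being the functor $F \mapsto M(R,F)$ corepresented by $R$ and hence limit-preserving. Iterating once more, sheafification preserves finite limits, establishing case~(1). Case~(2) follows from the same argument carried out in $\Mod_R$, using that its forgetful functor to $\Set$ creates the relevant limits and colimits.

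For the essentially small cases~(3) and~(4), I would first use essential smallness of $\C$ to choose an equivalent small category $\C_0$ and transport $J$ to a Grothendieck topology $J_0$ on $\C_0$. The equivalence $\C \simeq \C_0$ induces an equivalence $\PSh(\C,\A) \simeq \PSh(\C_0, \A)$ restricting to an equivalence of sheaf categories, under which the two inclusion functors correspond. Over the small site $(\C_0, J_0)$ the plus construction applies verbatim to presheaves valued in small sets (resp. small $R$-modules), since the indexing posets are now genuinely small; this yields weak sheafification there. Transporting the left adjoint back along the equivalence produces weak sheafification on $\C$, and since equivalences preserve finite limits and preservation of finite limits is invariant under natural isomorphism, left exactness transfers as well.

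The main obstacle will be the universe and size bookkeeping rather than any conceptual difficulty: one must check in each case that the plus-construction colimits are indexed by diagrams small enough to exist at the intended universe level (for cases~(1) and~(2), that subfunctors of a corepresentable on the large site remain $u$-small so that the colimit lands in the large target category), and that the identification of sheafification across the equivalence in~(3) and~(4) is compatible with the finite-limit-preservation statement. For the module cases one may alternatively avoid recomputing anything by invoking that the forgetful functor $\Mod_R \to \Set$ preserves sheafification in the sense of Definition~\ref{def:PreservesSheafification}, so that Proposition~\ref{prop:preservesSheafification_diagram_iso} identifies module sheafification with set-level sheafification on underlying presheaves, transporting left exactness directly.
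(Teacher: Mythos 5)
Your proposal is correct and is, at bottom, the same argument the paper invokes: the paper's one-line proof cites the general machinery for sheafifying presheaves valued in concrete categories whose forgetful functor preserves limits and filtered colimits, and the content of that machinery is exactly what you unfold --- the plus construction, left exact because the poset of covering sieves is filtered and filtered colimits commute with finite limits in $\Set$, lifted to $\Mod_R$ along the forgetful functor (which creates the relevant limits and colimits), and, for cases (3) and (4), transported along an equivalence with a genuinely small site, which is also how the formalization handles essential smallness. The one point to flag is your closing alternative for the module cases: Definition~\ref{def:PreservesSheafification} as formulated in the paper presupposes that both categories already have weak sheafification, so invoking it to \emph{obtain} module sheafification would be circular; it can only transport left-exactness once existence is established, which your primary argument already provides.
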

\begin{proof}
    This follows from the fact that these categories have limits and their forgetful functors preserve limits and filtered colimits. 
\end{proof}

\begin{proposition}\label{prop:hasSheafCompose_concrete}
    Let $\C$ be a large category equipped with a Grothendieck topology $J$. 
    \begin{itemize}
        \item Let $R$ be a ring. The forgetful functor from the category of large $R$-modules to the category of large sets has sheaf composition with respect to $J$. 
        \item If $\C$ is essentially small and $R$ is a ring, then the forgetful functor from the category of small $R$-modules to the category of small sets has sheaf composition with respect to $J$. 
    \end{itemize}
\end{proposition}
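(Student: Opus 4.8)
The plan is to exploit that the forgetful functor $U$ from $R$-modules to sets is a right adjoint: it has a left adjoint $L$, the free-module functor sending a set $E$ to the free $R$-module on $E$. Recall from Definition~\ref{def:SheafCompose} that to show $U$ has sheaf composition we must verify that $U \circ F$ is a $J$-sheaf of sets for every $J$-sheaf $F$ valued in $R$-modules. The argument rests on \mathlib's definition of the sheaf condition for a presheaf $P$ valued in an arbitrary category $\A$: such a $P$ is declared to be a $J$-sheaf precisely when, for every object $E$ of $\A$, the set-valued presheaf $\Hom_\A(E, P(-))$ is a $J$-sheaf of sets. This reduction to the set-valued case is what makes the argument go through cleanly.

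First I would fix a $J$-sheaf $F$ of $R$-modules and an arbitrary set $E$, and consider the set-valued presheaf $X \mapsto \Hom_\Set\bigl(E, U(F(X))\bigr)$. The adjunction $L \dashv U$ supplies a bijection
\[
    \Hom_\Set\bigl(E, U(F(X))\bigr) \;\cong\; \Hom_R\bigl(L(E), F(X)\bigr),
\]
natural in $X \in \C^{\op}$, so these two set-valued presheaves on $\C$ are isomorphic. Since $F$ is a $J$-sheaf of $R$-modules, the \mathlib definition applied to the object $L(E)$ tells us that the presheaf $X \mapsto \Hom_R\bigl(L(E), F(X)\bigr)$ is a $J$-sheaf of sets. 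Being isomorphic to a sheaf, the presheaf $X \mapsto \Hom_\Set\bigl(E, U(F(X))\bigr)$ is therefore also a sheaf. As $E$ was arbitrary, this is exactly the condition that $U \circ F$ be a $J$-sheaf of sets, establishing that $U$ has sheaf composition. The same argument works verbatim in the essentially small setting, since the free-forgetful adjunction between small $R$-modules and small sets exists at those universe levels as well.

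Both cases thus reduce to the single principle that a right adjoint has sheaf composition, which is really a consequence of $U$ preserving all limits; this is the natural companion to the limit-preservation used in the proof of Proposition~\ref{prop:hasSheafify_concrete}. I do not expect a genuine mathematical obstacle here, since $U$ preserves every limit. The only point requiring care is bookkeeping the universe levels, so that the adjunction isomorphism and the corepresentable reduction of the sheaf condition are formed at the correct sizes and match \mathlib's particular formulation of the sheaf condition for presheaves valued in a general category.
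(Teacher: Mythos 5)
Your proof is correct, but it takes a genuinely different route from the paper's. The paper (echoing the \mathlib instance it relies on) argues via limit preservation: the sheaf condition for a $\Mod_R$-valued presheaf amounts to certain cones being limit cones, and since the forgetful functor $U$ preserves all limits, applying $U$ carries these limit cones to limit cones, so $U \circ F$ is again a sheaf. You never touch limits: you invoke \mathlib's corepresentable formulation of the sheaf condition for category-valued presheaves and transpose along the free--forgetful adjunction $L \dashv U$, identifying $\Hom_{\Set}\bigl(E, U(F(-))\bigr)$ with $\Hom_{R}\bigl(L(E), F(-)\bigr)$, which is a sheaf of sets precisely because $F$ is a sheaf (applied at the object $L(E)$); invariance of the sheaf condition under isomorphism of presheaves finishes the argument. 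Both proofs are sound, and neither actually uses the essential smallness hypothesis in the second bullet --- the sheaf-composition property, unlike sheafification, imposes no size constraints. The trade-off: your transposition argument is self-contained, avoids reformulating the sheaf condition as a limit condition in $\Mod_R$, and isolates the clean general principle that composition with any right adjoint preserves sheaves; the paper's limit-preservation argument is more general (it applies to any limit-preserving functor, whether or not it admits a left adjoint) and is the form that \Lean synthesizes automatically from the existing instances for concrete categories, which is why the paper can dispatch the proposition in one line.
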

\begin{proof}
    This follows from the fact that this forgetful functor preserves limits. 
\end{proof}

\begin{proposition}\label{prop:preservesSheafification_concrete}
    Let $\C$ be a large category equipped with a Grothendieck topology $J$. 
    \begin{itemize}
        \item Let $R$ be a ring. The forgetful functor from the category of large $R$-modules to the category of large sets preserves sheafification. 
        \item If $\C$ is essentially small and $R$ is a ring, then the forgetful functor from the category of small $R$-modules to the category of small sets preserves sheafification. 
    \end{itemize}
\end{proposition}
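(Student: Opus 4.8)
The plan is to verify the condition of Definition~\ref{def:PreservesSheafification} directly, exploiting the fact that the forgetful functor $U$ behaves well with respect to exactly the categorical operations out of which sheafification is built. Concretely, the forgetful functor from (large or small) $R$-modules to (large or small) sets admits the free-module functor as a left adjoint, so it preserves all limits; and it creates filtered colimits, since filtered colimits of modules are computed on underlying sets. These are precisely the properties invoked in the proof of Proposition~\ref{prop:hasSheafify_concrete} to guarantee that sheafification exists, and they are what will make $U$ commute with it.

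First I would recall that in this setting sheafification is computed by Grothendieck's plus construction: writing $(-)^+$ for the plus functor, one has $P^{sh} \cong (P^+)^+$, and $P^+(X)$ is a filtered colimit --- over the system of covering sieves of $X$, which is filtered because covering sieves are stable under intersection --- of matching-family objects, each of which is itself a limit of values of $P$. Since $U$ preserves limits and filtered colimits, it commutes with the plus functor: there is a natural isomorphism $U \circ (-)^+ \cong (U \circ -)^+$ of functors on presheaves. Applying this twice yields a natural isomorphism
\[
    \beta_P \colon (U \circ P)^{sh} \xrightarrow{\ \sim\ } U \circ P^{sh},
\]
natural in the presheaf $P$. (By Proposition~\ref{prop:hasSheafCompose_concrete} the right-hand side is again a sheaf, consistent with $\beta_P$ being an isomorphism onto the sheafification of $U \circ P$.) The verification that $U$ commutes with the plus construction, and the construction of $\beta$, is the technical heart of the argument, and for the details I would refer to the \Lean code.

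With $\beta$ in hand the conclusion is formal. Let $\phi \colon P_1 \to P_2$ be a morphism of $R$-module presheaves whose sheafification $\phi^{sh}$ is an isomorphism. Naturality of $\beta$ gives $\beta_{P_2} \circ (U\phi)^{sh} = (U \circ \phi^{sh}) \circ \beta_{P_1}$. Since $\phi^{sh}$ is an isomorphism, so is $U \circ \phi^{sh}$, and since the components of $\beta$ are isomorphisms, it follows that $(U\phi)^{sh}$ is an isomorphism --- which is exactly the assertion that $U$ preserves sheafification. The argument is identical in the large case and in the essentially small case, the only difference being the ambient universe in which the colimits of the plus construction are formed; essential smallness of $\C$ is what keeps these colimits, and hence the whole construction, within the category of small sets (resp.\ small modules), just as in Proposition~\ref{prop:hasSheafify_concrete}. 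The main obstacle is therefore entirely contained in the second step: pinning down the explicit plus-construction model of sheafification and checking that $U$ commutes with it on the nose.
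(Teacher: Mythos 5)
Your proposal is correct and takes essentially the same approach as the paper: the paper's proof is a one-line citation of exactly the facts you exploit (the forgetful functor preserves limits and filtered colimits), and your plus-construction argument is the standard way to flesh that citation out into a commutation isomorphism plus a formal naturality argument. The only discrepancy is that the paper additionally cites that the forgetful functor \emph{reflects} isomorphisms (relevant to the slightly different formulation of Definition~\ref{def:PreservesSheafification} used in \mathlib), whereas your route does not need it, since you conclude directly from the invertibility of the comparison map $\beta$.
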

\begin{proof}
    This follows from the fact that this forgetful functor preserves limits and filtered colimits, and reflects isomorphisms. 
\end{proof}

Propositions~\ref{prop:hasSheafify_concrete},~\ref{prop:hasSheafCompose_concrete}, and~\ref{prop:preservesSheafification_concrete} are not stated explicitly in this way in \mathlib. Instead, there are much more general instances for sheaves valued in concrete categories whose forgetful functor preserves certain limits and colimits. For the convenience of the reader, we link to files on a branch of \mathlib (one file for each Proposition), which show that \Lean automatically synthesizes these instances \href{https://github.com/leanprover-community/mathlib4/blob/42d44d43b9d4ce35d079ecee2ecd173eaf640091/test/CategoryTheory/Sites/ConcreteSheafification.lean}{\faExternalLink} \href{https://github.com/leanprover-community/mathlib4/blob/42d44d43b9d4ce35d079ecee2ecd173eaf640091/test/CategoryTheory/Sites/Whiskering.lean}{\faExternalLink} \href{https://github.com/leanprover-community/mathlib4/blob/42d44d43b9d4ce35d079ecee2ecd173eaf640091/test/CategoryTheory/Sites/PreservesSheafification.lean}{\faExternalLink}.

Proposition~\ref{prop:hasSheafify_concrete} shows that in the setting of condensed- and light condensed sets and modules, we can sheafify. Proposition~\ref{prop:hasSheafCompose_concrete} shows that there is a ``forgetful functor'' $\CondMod_R \to \CondSet$ which takes a condensed $R$-module to its ``underlying'' condensed set (and the analogue for light condensed objects). Proposition~\ref{prop:preservesSheafification_concrete} shows that this functor fits into a diagram like the one in Construction~\ref{const:preservesSheafification_diagram}, which commutes up to isomorphism, as the bottom row. 

\section{Constant sheaves}\label{sec:constant}
\begin{notation} \href{https://github.com/leanprover-community/mathlib4/blob/ffb2f0dbf80ad18fa75f85bd0facee6737b59acf/Mathlib/CategoryTheory/Sites/ConstantSheaf.lean#L65}{\faExternalLink}
    Let $\C$ be a category equipped with a Grothendieck topology $J$. Let $\A$ be a category which has weak sheafification with respect to $J$ and let $X$ be an object of $\A$. We denote by $\underline{X}$ the constant sheaf at $X$, i.e. the sheafification of the constant presheaf $\mathsf{cst}_X : \C^{\op} \to \A$, $Y \mapsto X$.
\end{notation}

\begin{construction}\label{const:constantSheafAdj} \href{https://github.com/leanprover-community/mathlib4/blob/ffb2f0dbf80ad18fa75f85bd0facee6737b59acf/Mathlib/CategoryTheory/Sites/ConstantSheaf.lean#L68-L71}{\faExternalLink}
    \emph{The constant sheaf adjunction}. Let $\C$ be a category equipped with a Grothendieck topology $J$. Let $\A$ be a category which has weak sheafification with respect to $J$. Suppose that $\C$ has a terminal object $*$ and let $\mathsf{ev}_*$ denote the functor which maps a sheaf or presheaf $F$ to its evaluation $F(*)$. We construct an adjunction 
    \[\underline{(-)} \dashv \mathsf{ev}_*\] 
    We do this by constructing an adjunction $\mathsf{cst}_{-} \dashv \mathsf{ev}_*$ and composing it with the sheafification adjunction. The unit of the desired adjunction comes from the fact that the constant presheaf followed by evaluation is simply the identity. To define the counit, we need to give a natural transformation from the functor given by evaluation at the point followed by the constant presheaf functor, to the identity functor. Its component at a presheaf $F$ corresponds to a natural transformation from the constant presheaf at $F(*)$ to $F$, and the component of that natural transformation at an object $X$ is just a map $F(*) \to F(X)$, which is induced by the unique map $X \to *$. Naturality is easy.
\end{construction}

\begin{definition}\label{def:discreteSheaf} \href{https://github.com/leanprover-community/mathlib4/blob/ffb2f0dbf80ad18fa75f85bd0facee6737b59acf/Mathlib/CategoryTheory/Sites/ConstantSheaf.lean#L80-L81}{\faExternalLink}
    Let $\C$ be a category equipped with a Grothendieck topology $J$. Let $\A$ be a category which has weak sheafification with respect to $J$ (this property ensures that the constant sheaf functor $\underline{(-)} : \A \to \Sh_{J}(\C, \A)$ exists). We say that an $\A$-valued sheaf $\F$ on $\C$ is \emph{constant} if it is in the essential image of the constant sheaf functor, i.e.~if there exists an object $X$ of $A$ and an isomorphism $\underline{X} \cong \F$.
\end{definition}

\begin{proposition}\label{prop:isDiscrete_iff_mem_essImage} \href{https://github.com/leanprover-community/mathlib4/blob/ffb2f0dbf80ad18fa75f85bd0facee6737b59acf/Mathlib/CategoryTheory/Sites/ConstantSheaf.lean#L113-L116}{\faExternalLink}
    Let $\C$, $J$, and $\A$, be like in Definition~\ref{def:discreteSheaf}. Suppose that $\C$ has a terminal object $*$ and that the constant sheaf functor $\A \to \Sh_{J}(\C, \A), X \mapsto \underline{X}$ is fully faithful. A sheaf $\F$ on $\C$ is constant if and only if  the counit induces an isomorphism $\underline{\F(*)} \to \F$.
\end{proposition}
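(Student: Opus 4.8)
The plan is to recognize this statement as a direct specialization of Proposition~\ref{prop:adjunction-generality-1} to the constant sheaf adjunction. By Construction~\ref{const:constantSheafAdj}, the constant sheaf functor $\underline{(-)}$ is the left adjoint in an adjunction $\underline{(-)} \dashv \mathsf{ev}_*$, where $\mathsf{ev}_*$ sends a sheaf $F$ to its value $F(*)$ at the terminal object. By Definition~\ref{def:discreteSheaf}, a sheaf $\F$ being constant means exactly that it lies in the essential image of $\underline{(-)}$. So the two notions appearing in the statement are each already expressed abstractly in the hypotheses of Proposition~\ref{prop:adjunction-generality-1}.

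First I would apply Proposition~\ref{prop:adjunction-generality-1}, taking the general left adjoint $L$ to be $\underline{(-)}$, the general right adjoint $R$ to be $\mathsf{ev}_*$, and the object $X$ to be $\F$. The hypothesis there that $L$ is fully faithful is precisely our standing assumption that the constant sheaf functor is fully faithful. The proposition then gives that $\F$ is in the essential image of $\underline{(-)}$ if and only if the counit induces an isomorphism $L(R(\F)) \to \F$.

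Second I would unwind the objects and morphisms. We have $R(\F) = \mathsf{ev}_*(\F) = \F(*)$, hence $L(R(\F)) = \underline{\F(*)}$, so the counit component at $\F$ is exactly the map $\underline{\F(*)} \to \F$ named in the statement. Combining this identification with the translation ``in the essential image of $\underline{(-)}$'' $=$ ``constant'' coming from Definition~\ref{def:discreteSheaf} yields the claimed equivalence.

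There is essentially no obstacle: all of the mathematical content is already packaged into Proposition~\ref{prop:adjunction-generality-1}, and the present statement is merely its instantiation at the constant sheaf adjunction. The only point requiring care is to confirm that the counit of the composite adjunction built in Construction~\ref{const:constantSheafAdj} has the expected component $\underline{\F(*)} \to \F$, which is immediate from the way that construction composes the constant-presheaf adjunction with the sheafification adjunction.
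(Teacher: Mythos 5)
Your proposal is correct and matches the paper's proof exactly: the paper also proves this as a direct application of Proposition~\ref{prop:adjunction-generality-1} to the constant sheaf adjunction of Construction~\ref{const:constantSheafAdj}. Your write-up merely spells out the instantiation ($L = \underline{(-)}$, $R = \mathsf{ev}_*$, $X = \F$) that the paper leaves implicit.
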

\begin{proof}
    This is a direct application of Proposition~\ref{prop:adjunction-generality-1}.
\end{proof}

The main result of this section is Proposition~\ref{prop:isDiscrete_iff_forget}. To prove it, we first need a technical lemma:

\begin{lemma}\label{lemma:constantSheafAdj_counit_w} \href{https://github.com/leanprover-community/mathlib4/blob/ffb2f0dbf80ad18fa75f85bd0facee6737b59acf/Mathlib/CategoryTheory/Sites/ConstantSheaf.lean#L193-L208}{\faExternalLink}
    Let $\C$ be a category equipped with a Grothendieck topology $J$, let $\A$ and $\B$ be categories with weak sheafification with respect to $J$, and let $U : \A \to \B$ be a functor which preserves sheafification and such that $U$ has sheaf composition with respect to $J$. Suppose that the constant sheaf functors $\A \to \Sh_J(\C, \A)$ and $\B \to \Sh_J(\C, \B)$ are both fully faithful, and that $\C$ has a terminal object $*$. Let $F$ be an $\A$-valued sheaf on $\C$. Denote by $\epsilon^{\A}$ the counit of the constant sheaf adjunction between $\A$ and $\Sh_J(\C, \A)$ and $\epsilon^{\B}$ the counit of the constant sheaf between $\B$ and $\Sh_J(\C, \B)$. Let $F_U$ denote the sheaf composition functor associated to $U$. There is a commutative diagram
    \[\begin{tikzcd}
        {\underline{U(F(*))}} \\
        {U \circ \underline{F(*)}} & {U \circ F}
        \arrow[from=1-1, to=2-1]
        \arrow["{\epsilon^{\B}_{U\circ F}}", from=1-1, to=2-2]
        \arrow["{F_U(\epsilon^{\A}_F)}"', from=2-1, to=2-2]
    \end{tikzcd},\]
    where the vertical arrow is an isomorphism.
\end{lemma}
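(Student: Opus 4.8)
The plan is to realize the vertical map as a component of the comparison transformation $\alpha$ of Construction~\ref{const:preservesSheafification_diagram}, and then to establish commutativity by expanding the two counits through the composite-adjunction description of the constant-sheaf adjunction (Construction~\ref{const:constantSheafAdj}). First I would construct the vertical arrow. Since $\underline{X}$ is by definition the sheafification of the constant presheaf $\mathsf{cst}_X$, and since whiskering by $U$ carries the constant presheaf $\mathsf{cst}_{F(*)}$ on $\A$ to the constant presheaf $\mathsf{cst}_{U(F(*))}$ on $\B$, the component $\alpha_{\mathsf{cst}_{F(*)}}$ is precisely a morphism $(U\circ\mathsf{cst}_{F(*)})^{sh} = \underline{U(F(*))} \to U\circ(\mathsf{cst}_{F(*)})^{sh} = U\circ\underline{F(*)}$. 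I would take this as the vertical arrow; it is an isomorphism by Proposition~\ref{prop:preservesSheafification_diagram_iso}, because $U$ preserves sheafification.

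To check commutativity, I would expand both counits. By Construction~\ref{const:constantSheafAdj} the constant-sheaf counit $\epsilon^{\A}$ is the counit of a composite adjunction, so $\epsilon^{\A}_F$ factors as the sheafification counit at $F$ precomposed with the sheafification of the presheaf-level counit $\mathsf{cst}_{F(*)}\to F$ of the adjunction $\mathsf{cst}_{-}\dashv\mathsf{ev}_*$; likewise $\epsilon^{\B}_{U\circ F}$ factors through the presheaf-level counit $\mathsf{cst}_{U(F(*))}\to U\circ F$. Two elementary facts then handle the bookkeeping. First, this presheaf-level counit is, at every object $X$, the map induced by the unique morphism $X\to *$, so it is strictly compatible with whiskering by $U$; this identifies the sheafified presheaf-counit appearing in $\epsilon^{\B}$ with $U$ applied to the one appearing in $\epsilon^{\A}$. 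Second, the naturality square of $\alpha$ along the presheaf-level counit $\mathsf{cst}_{F(*)}\to F$ rewrites $F_U$ of the sheafified presheaf-counit composed with $\alpha_{\mathsf{cst}_{F(*)}}$ in terms of $\alpha_{F}$ (with $F$ read as its underlying presheaf).

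What remains, and what I expect to be the main obstacle, is the compatibility of $\alpha$ with the sheafification counit: that $F_U$ of the sheafification counit at $F$, precomposed with $\alpha_F$, equals the sheafification counit at $U\circ F$. I would prove this by comparing adjuncts under the sheafification adjunction on $\B$, where both sides are maps from the sheafification of the underlying presheaf of $U\circ F$ into $U\circ F$. The sheafification counit is adjunct to the identity; the adjunct of the other side unwinds, using the defining property of $\alpha$---that it is adjunct to $U$ whiskered with the sheafification unit---together with the triangle identity $\iota(\text{counit})\circ\eta = \mathrm{id}$, again to the identity. Hence the two maps agree. Combining this identity with the two compatibilities above makes the triangle commute. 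The only real work is the disciplined handling of the composite adjunction and the whiskering functor $F_U$; the conceptual content is simply that $\alpha$ intertwines the units and counits of sheafification on the two sides.
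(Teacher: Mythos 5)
Your proposal is correct and follows essentially the same route as the paper: the vertical arrow is the component of $\alpha$ from Construction~\ref{const:preservesSheafification_diagram} at the constant presheaf $\mathsf{cst}_{F(*)}$ (an isomorphism by Proposition~\ref{prop:preservesSheafification_diagram_iso}), and commutativity of the triangle is verified by passing to adjuncts under the sheafification adjunction, i.e.~precomposing with its unit. The only difference is one of explicitness: where the paper compresses this step into ``unfolding the definitions of the maps,'' you carry out that unfolding in a structured way, via the composite-adjunction description of the constant-sheaf counit, naturality of $\alpha$, strict compatibility of the presheaf-level counit with whiskering by $U$, and the triangle identities.
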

\begin{proof}
    We start by extending the diagram in Construction~\ref{const:preservesSheafification_diagram} by the constant presheaf functors: 
    \[\begin{tikzcd}
        \A & \B \\
        {\PSh(\C,\A)} & {\PSh(\C, \B)} \\
        {\Sh_J(\C,\A)} & {\Sh_J(\C,\B)}
        \arrow["U", from=1-1, to=1-2]
        \arrow["{\mathsf{cst}_{-}}"', from=1-1, to=2-1]
        \arrow["{\mathsf{cst}_{U(-)}}", from=1-2, to=2-2]
        \arrow["{U\circ}", from=2-1, to=2-2]
        \arrow["{G_1}"', from=2-1, to=3-1]
        \arrow["{G_2}", from=2-2, to=3-2]
        \arrow["{F_U}"', from=3-1, to=3-2]
    \end{tikzcd}\]
    The upper square is strictly commutative by definition, and the lower square is commutative up to isomorphism by Proposition \ref{prop:preservesSheafification_diagram_iso}. For any $X \in \A$ we obtain an isomorphism $U \circ \underline{X} \cong \underline{U(X)}$ by applying this natural isomorphism at the object $X$. In particular, by taking $X$ as $F(*)$, we obtain the vertical arrow in the triangle in the lemma statement. 

    To verify that the triangle commutes, it suffices to check the equality after precomposing with the unit of the sheafification adjunction (this amounts to applying the hom set equivalence of the adjunction). After precomposing with the unit, it becomes a question of unfolding the definitions of the maps to see that it commutes.
\end{proof}

\begin{proposition}\label{prop:isDiscrete_iff_forget} \href{https://github.com/leanprover-community/mathlib4/blob/ffb2f0dbf80ad18fa75f85bd0facee6737b59acf/Mathlib/CategoryTheory/Sites/ConstantSheaf.lean#L227-L231}{\faExternalLink}
    Let $\C$ be a category equipped with a Grothendieck topology $J$, let $\A$ and $\B$ be categories with weak sheafification with respect to $J$, and let $U : \A \to \B$ be a functor which preserves sheafification and such that $U$ has sheaf composition with respect to $J$. Suppose that the constant sheaf functors $\A \to \Sh_J(\C, \A)$ and $\B \to \Sh_J(\C, \B)$ are both fully faithful, and that $\C$ has a terminal object $*$. Let $F$ be an $\A$-valued sheaf on $\C$. Then $F$ is constant if and only if $U\circ F$ is constant.
\end{proposition}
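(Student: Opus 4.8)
The plan is to reduce both sides of the equivalence to a statement about the counit of the constant sheaf adjunction, and then to transport that counit across $U$ using Lemma~\ref{lemma:constantSheafAdj_counit_w}. Since the constant sheaf functors for both $\A$ and $\B$ are assumed fully faithful and $\C$ has a terminal object, Proposition~\ref{prop:isDiscrete_iff_mem_essImage} applies on both sides: $F$ is constant if and only if the counit $\epsilon^{\A}_F : \underline{F(*)} \to F$ is an isomorphism, while $U \circ F$ (i.e.\ $F_U(F)$) is constant if and only if $\epsilon^{\B}_{U \circ F} : \underline{U(F(*))} \to U \circ F$ is an isomorphism, where we use that $(U\circ F)(*) = U(F(*))$. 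Thus it suffices to prove that $\epsilon^{\A}_F$ is an isomorphism if and only if $\epsilon^{\B}_{U\circ F}$ is.

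Lemma~\ref{lemma:constantSheafAdj_counit_w} provides exactly the required bridge: it exhibits a commutative triangle whose vertical leg $\underline{U(F(*))} \to U \circ \underline{F(*)}$ is an isomorphism and which expresses $\epsilon^{\B}_{U\circ F}$ as the composite of this isomorphism with $F_U(\epsilon^{\A}_F)$. Because the vertical leg is invertible, $\epsilon^{\B}_{U\circ F}$ is an isomorphism if and only if $F_U(\epsilon^{\A}_F)$ is. The forward direction is then immediate: if $F$ is constant, then $\epsilon^{\A}_F$ is an isomorphism, and since $F_U$ is a functor it sends this to an isomorphism $F_U(\epsilon^{\A}_F)$; by the triangle $\epsilon^{\B}_{U\circ F}$ is an isomorphism, so $U\circ F$ is constant.

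For the converse, suppose $U\circ F$ is constant, so that $\epsilon^{\B}_{U\circ F}$, and hence $F_U(\epsilon^{\A}_F)$, is an isomorphism; we must conclude that $\epsilon^{\A}_F$ itself is an isomorphism. This is the one place where the argument is not purely formal, since it requires $F_U$ to \emph{reflect} isomorphisms rather than merely preserve them. By Proposition~\ref{prop:reflectsIso_etc}(3), $F_U$ reflects isomorphisms as soon as $U$ does, so the proof goes through under the additional hypothesis that $U$ reflects isomorphisms — a condition satisfied in all the intended applications, for instance by the forgetful functor $\Mod_R \to \Set$ inducing $\CondMod_R \to \CondSet$. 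I expect this backward implication, and specifically the appeal to reflection of isomorphisms, to be the main subtlety of the proof; the genuinely technical content, namely the naturality and the identification of $\epsilon^{\B}_{U\circ F}$ with $F_U(\epsilon^{\A}_F)$ up to the canonical isomorphism, has already been absorbed into Lemma~\ref{lemma:constantSheafAdj_counit_w}.
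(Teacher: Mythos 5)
Your proof is correct and takes essentially the same route as the paper's: reduce both sides to the counit criterion of Proposition~\ref{prop:isDiscrete_iff_mem_essImage}, identify $\epsilon^{\B}_{U\circ F}$ with $F_U(\epsilon^{\A}_F)$ up to the canonical isomorphism using Lemma~\ref{lemma:constantSheafAdj_counit_w}, and deduce the forward direction from functoriality of $F_U$ and the backward direction from $F_U$ reflecting isomorphisms via Proposition~\ref{prop:reflectsIso_etc}(3).

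The point you flag as an ``additional hypothesis'' --- that $U$ reflects isomorphisms --- deserves emphasis, because it is not a defect of your argument but a genuine omission in the paper's prose statement. The paper's own proof asserts that $F_U$ reflects isomorphisms ``by Proposition~\ref{prop:reflectsIso_etc}'', but part (3) of that proposition requires $U$ to reflect isomorphisms, and none of the stated hypotheses (preserving sheafification, having sheaf composition, full faithfulness of the two constant sheaf functors) implies this. Indeed, the statement is false without some such assumption: take $\B$ to be the terminal category and $U \colon \A \to \B$ the unique functor. All the stated hypotheses are then satisfied ($\Sh_J(\C,\B)$ is itself terminal, so sheafification, sheaf composition, and full faithfulness of the constant sheaf functor are all trivial), and $U \circ F$ is constant for every $\A$-valued sheaf $F$, yet $F$ itself need not be constant. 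So your version of the statement, with reflection of isomorphisms added, is the correct one; it is what the formalization must (and does) assume, and it holds in the paper's intended application, since the forgetful functor $\Mod_R \to \Set$ reflects isomorphisms.
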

\begin{proof}
    With notation as in Lemma~\ref{lemma:constantSheafAdj_counit_w}, this amounts to showing that 
    \[\epsilon^{\A}_{F} : \underline{F(*)} \to F\]
    is an isomorphism if and only if
    \[\epsilon^{\B}_{U \circ F} : \underline{U(F(*))} \to U \circ F\]
    is an isomorphism. We know that $F_U$ reflects isomorphisms by Proposition \ref{prop:reflectsIso_etc}, so we conclude using Lemma~\ref{lemma:constantSheafAdj_counit_w}.
\end{proof}

\section{The functors from $\Set$ to $\CondSet$}\label{sec:functors}
The constant sheaf functor 
\begin{align*}
    \underline{(-)} : \Set & \to \CondSet
\end{align*}
is left adjoint to $U : \CondSet \to \Set$, the functor which maps a condensed set $X$ to the underlying set $X(*)$, by Construction~\ref{const:constantSheafAdj}. Recall that a condensed set $X$ is \emph{discrete} if it is in the essential image of this functor, i.e.~if there exists a set $X'$ and an isomorphism $X \cong \underline{X'}$.

The goal of this section is to construct an isomorphism between $\underline{(-)}$ and the functor 
\begin{align*}
    L : \Set & \to \CondSet \\
    X & \mapsto \LocConst(-, X).
\end{align*}
This will yield a proof of Theorem~\ref{thm:A2}.  
\begin{maintheorembody}\label{thm:A2}
    A condensed set is discrete if and only if it is in the essential image of the functor 
    \[
        L : \Set \to \CondSet,
    \] 
    i.e. if there exists some set $X'$ and an isomorphism of condensed sets
    \[
        X \cong \LocConst(-,X')
    \]
\end{maintheorembody}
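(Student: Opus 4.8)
The plan is to reduce everything to a single natural isomorphism $L \cong \underline{(-)}$ of functors $\Set \to \CondSet$. Granting this, the theorem is immediate: naturally isomorphic functors have the same essential image, and by Definition~\ref{def:discrete_condensed_set} being discrete means lying in the essential image of $\underline{(-)}$, so a condensed set is discrete precisely when it lies in the essential image of $L$.

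To produce the isomorphism $L \cong \underline{(-)}$, I would not compare the two functors directly. Instead, observe that $\underline{(-)}$ is by construction left adjoint to $U = \mathsf{ev}_*$ (Construction~\ref{const:constantSheafAdj}), so it suffices to exhibit an adjunction $L \dashv U$: both functors are then left adjoint to the same functor $U$, and left adjoints are unique up to canonical natural isomorphism, which yields $L \cong \underline{(-)}$.

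To construct the adjunction $L \dashv U$, I would build the unit and counit and check the triangle identities. The unit $\operatorname{Id}_{\Set} \to U \circ L$ is the easy half: $U(L(X)) = \LocConst(*, X)$, and a locally constant map out of the one-point space $*$ is just an element of $X$, so $U \circ L \cong \operatorname{Id}_{\Set}$ canonically, and this provides the unit (necessarily an isomorphism, cf.~Proposition~\ref{prop:adjunction-generality-2}). The counit $L \circ U \to \operatorname{Id}_{\CondSet}$ is the substantive half: for a condensed set $Y$ I must produce a natural map $\LocConst(-, Y(*)) \to Y$. Given a compact Hausdorff space $S$ and a locally constant map $f : S \to Y(*)$, compactness forces $f$ to take finitely many values and partitions $S$ into finitely many clopen fibers, exhibiting $S$ as a finite coproduct $\coprod_y f^{-1}(y)$ in $\CompHaus$. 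Property (1) of a condensed set then identifies $Y(S) \cong \prod_y Y(f^{-1}(y))$, and I define the image of $f$ by sending, in each factor, the element $y \in Y(*)$ along the restriction map $Y(*) \to Y(f^{-1}(y))$ induced by $f^{-1}(y) \to *$.

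The main obstacle is precisely this counit: I expect the real work to be checking that the assignment above is well-defined (independent of refining the clopen partition), natural in both $S$ and $Y$, and that it satisfies the two triangle identities against the unit. This is where the finite-product property (1) and the sheaf/equalizer property (2) of condensed sets are genuinely used, and it is the technical content corresponding to the ``surprisingly subtle'' fact that the constant sheaf is computed by locally constant maps. Alternatively, one could package the same data as a natural hom-set bijection $\Hom_{\CondSet}(\LocConst(-,X), Y) \cong \Hom_{\Set}(X, Y(*))$ given by evaluation at $*$, and verify bijectivity; the content is the same. Once $L \dashv U$ is in hand, uniqueness of adjoints delivers $L \cong \underline{(-)}$ and hence the theorem.
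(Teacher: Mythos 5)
Your proposal is correct and follows essentially the same route as the paper: constructing the adjunction $L \dashv U$ via the easy unit (constant maps from $*$) and the fibrewise counit built from the finite-product property, verifying naturality and the triangle identities, and then invoking uniqueness of left adjoints to get $L \cong \underline{(-)}$. The only minor inaccuracy is that no well-definedness issue about refining partitions arises (the decomposition is canonically the set of fibres of $f$), and the paper's verification in fact only needs finite-product preservation, not the equalizer condition.
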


The desired isomorphism is obtained by constructing an adjunction $L \dashv U$, and using the fact that adjoints are unique up to isomorphism. 

The unit is easy to define (see Construction~\ref{const:unit}) and obviously an isomorphism, giving full faithfulness once we have established the adjunction. Defining the counit is the hard part (see Construction~\ref{const:counit} and \S\ref{sec:counit_naturality}).

\begin{construction}\label{const:unit} \href{https://github.com/leanprover-community/mathlib4/blob/ffb2f0dbf80ad18fa75f85bd0facee6737b59acf/Mathlib/Condensed/Discrete/LocallyConstant.lean#L292-L300}{\faExternalLink}
    \emph{Unit}. We construct a natural transformation 
    \[
        \mathsf{Id}_{\Set} \to U \circ L.
    \] 
    This amounts to giving a map $X \to \LocConst(*, X)$ for every set $X$ and proving naturality in $X$. The desired map is just the one which takes an element $x \in X$ to the corresponding constant map. Naturality is easy, and we also easily see that this natural transformation is in fact an isomorphism.
\end{construction}

\begin{construction}\label{const:counit} \href{https://github.com/leanprover-community/mathlib4/blob/ffb2f0dbf80ad18fa75f85bd0facee6737b59acf/Mathlib/Condensed/Discrete/LocallyConstant.lean#L136-L151}{\faExternalLink}
    \emph{Components of the counit}. Let $X$ be a condensed set and let $S$ be a compact Hausdorff space. We construct a map
    \[
        \LocConst(S, X(*)) \to X(S)
    \]
    as follows: A locally constant map $f : S \to X(*)$ corresponds to a finite partition of $S$ into closed (and hence compact Hausdorff) subsets $S_1, \dots, S_n$ (these are the fibres of $f$). We have isomorphisms 
    \begin{align*}
        X(S) \cong X\left(S_1 \amalg \cdots \amalg S_n\right) \cong X(S_1) \times \cdots \times X(S_n),
    \end{align*}
    so giving an element of $X(S)$ is the same as giving an element of each of $X(S_1), \dots, X(S_n)$. Let $x_i \in X(*)$ denote the element such that $S_i = f^{-1}\{x_i\}$, and let $g_i : S_i \to *$ be the unique map. Then our element of $X(S_i)$ will be $X(g_i)(x_i)$. 
\end{construction}

\subsection{Naturality of the counit}\label{sec:counit_naturality}
We need to show that the map defined in Construction~\ref{const:counit} as the components of the counit really defines a natural transformation of functors $L \circ U \to \mathsf{Id}_{\CondSet}$, where $U : \CondSet \to \Set$ is the underlying set functor. We start with naturality in the compact Hausdorff space $S$ and then proceed to show that it is natural in the condensed set $X$. This is where it starts to really matter how one sets up the formalized proof. A key lemma which easy to prove and is used repeatedly in these naturality proofs is Lemma~\ref{lemma:presheaf_ext}.

\begin{lemma}\label{lemma:presheaf_ext} \href{https://github.com/leanprover-community/mathlib4/blob/ffb2f0dbf80ad18fa75f85bd0facee6737b59acf/Mathlib/Condensed/Discrete/LocallyConstant.lean#L158-L167}{\faExternalLink}
    Let $X$ and $Y$ be condensed sets, $S$ a compact Hausdorff space and $f : S \to Y(*)$ a locally constant map. Let 
    \[
        S \cong S_1 \amalg \cdots \amalg S_n
    \]
    be the corresponding decomposition of $S$ into the fibres of $f$. Denote by $\iota_i : S_i \to S$ the inclusion map. Let $x, y \in X(S)$ and suppose that for all $i$, 
    \[
        X(\iota_i)(x) = X(\iota_i)(y).
    \] 
    Then $x = y$.
\end{lemma}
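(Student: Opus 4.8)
The plan is to reduce the claim to the finite-product-preservation property of condensed sets (property (1) in the definition of a condensed set). The decomposition $S \cong S_1 \amalg \cdots \amalg S_n$ into the fibres of $f$ exhibits $S$ as a finite coproduct of the $S_i$ in $\CompHaus$ (each $S_i$ is a closed, hence compact Hausdorff, subspace of $S$), with the inclusions $\iota_i : S_i \to S$ serving as the coproduct cocone. Applying $X$ and invoking property (1), the natural comparison map
\[
    X(S) \longrightarrow \prod_{i=1}^n X(S_i)
\]
is a bijection.

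The key point is to identify this abstract comparison map with the tuple of the maps induced by the inclusions. By the universal property of the coproduct, the canonical map $X\bigl(\coprod_i S_i\bigr) \to \prod_i X(S_i)$ sends a section $z \in X(S)$ to $\bigl(X(\iota_i)(z)\bigr)_i$, since the $i$-th component is obtained by restricting along the $i$-th structural coproduct inclusion, which matches $\iota_i$ under the homeomorphism $S \cong \coprod_i S_i$. Thus the hypothesis $X(\iota_i)(x) = X(\iota_i)(y)$ for all $i$ says exactly that $x$ and $y$ have the same image under this map. As the map is a bijection, it is in particular injective, and we conclude $x = y$.

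I expect the only real work to lie in the bookkeeping of the second step: one must check that the comparison isomorphism furnished by product-preservation is genuinely componentwise given by the maps $X(\iota_i)$, and that the fibre inclusions $\iota_i$ coincide with the structural coproduct inclusions under the identification $S \cong \coprod_i S_i$. Informally this is immediate from the universal property of the coproduct, but in the formalization it amounts to carefully matching the cocone arising from the fibre decomposition with the one coming from the categorical coproduct. Everything else is a direct application of injectivity.
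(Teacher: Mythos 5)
Your proposal is correct and is essentially the paper's own argument: the paper's one-line proof ("the condition simply says that $x$ and $y$ are equal when considered as elements of the product $X(S_1) \times \cdots \times X(S_n)$") is exactly your reduction to injectivity of the product-comparison bijection coming from the finite-product-preservation property, with the components given by restriction along the fibre inclusions. You have merely spelled out the bookkeeping (identifying the fibre inclusions with the coproduct structure maps) that the paper leaves implicit, which is also where the paper notes the formalization effort lies.
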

\begin{proof}
    The condition simply says that $x$ and $y$ are equal when considered as elements of the product 
    \[ 
        X(S_1) \times \cdots \times X(S_n).
    \]
\end{proof}

To be able to successfully use this lemma in the formalization, it is important to be careful with setting everything up. In \Lean, the statement is as follows:
\begin{lstlisting}
lemma presheaf_ext (X : (CompHausLike.{u} P)ᵒᵖ ⥤ Type max u w)
    [PreservesFiniteProducts X] (x y : X.obj ⟨S⟩)
    [HasExplicitFiniteCoproducts.{u} P]
    (h : ∀ (a : Fiber f), X.map (sigmaIncl f a).op x = X.map (sigmaIncl f a).op y) : 
    x = y := ...
\end{lstlisting}
The assumption is literally phrased such that one needs to check that for every fibre $f^{-1}\{x\}$ (denoted by \lean{a : Fiber f} in the \Lean statement), $X$ applied to the inclusion map $f^{-1}\{x\} \to S$ agrees on the two elements. The terms of type \lean{Fiber f} are defined as actual subtypes of $S$ and the maps \lean{sigmaIncl f a} are the inclusion maps.

\begin{lemma}\label{lemma:counit_natural_S} \href{https://github.com/leanprover-community/mathlib4/blob/ffb2f0dbf80ad18fa75f85bd0facee6737b59acf/Mathlib/Condensed/Discrete/LocallyConstant.lean#L200-L216}{\faExternalLink}
    The proposed counit defined in Construction~\ref{const:counit} is natural in the compact Hausdorff space $S$. 
\end{lemma}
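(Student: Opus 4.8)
The plan is to check directly that the square expressing naturality commutes, reducing every equality of sections to the extensionality statement of Lemma~\ref{lemma:presheaf_ext}. Recall that the component of the proposed counit at $X$ is meant to be a morphism of condensed sets $\LocConst(-, X(*)) \to X$, i.e.\ a natural transformation of presheaves on $\CompHaus$, so ``natural in $S$'' means precisely that the maps of Construction~\ref{const:counit} assemble into such a natural transformation. First I would fix a continuous map $\phi : S \to T$ of compact Hausdorff spaces and a locally constant map $g : T \to X(*)$, i.e.\ a general element of $\LocConst(T, X(*))$. Unwinding the two presheaf structures (the presheaf $\LocConst(-, X(*))$ sends $\phi$ to precomposition $g \mapsto g \circ \phi$, and $X$ sends it to $X(\phi)$), the required identity is that the element of $X(S)$ obtained by applying the counit at $T$ to $g$ and then $X(\phi)$ coincides with the one obtained by first forming $g \circ \phi \in \LocConst(S, X(*))$ and then applying the counit at $S$.

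To compare these two elements of $X(S)$, I would apply Lemma~\ref{lemma:presheaf_ext} to the locally constant map $g \circ \phi : S \to X(*)$, whose fibres decompose $S$. It therefore suffices to prove that the two elements agree after applying $X(\iota_a)$, where $\iota_a : a \hookrightarrow S$ runs over the inclusions of the fibres $a = (g \circ \phi)^{-1}\{x\}$ and $x \in X(*)$ is the common value of $g \circ \phi$ on $a$. For the side that applies the counit at $S$, the defining property in Construction~\ref{const:counit} says that this restriction is exactly $X(u_a)(x)$, where $u_a : a \to *$ is the unique map to the terminal object.

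For the other side, contravariant functoriality of $X$ gives $X(\iota_a)(X(\phi)(c)) = X(\phi \circ \iota_a)(c)$ for $c = c_X(T)(g)$. The crucial geometric observation is that every point of $a$ is sent by $\phi$ into the fibre $T_x := g^{-1}\{x\}$ of $g$, so $\phi \circ \iota_a$ factors as $a \xrightarrow{\phi_a} T_x \hookrightarrow T$. Applying the defining property of the counit at $T$ to the inclusion $T_x \hookrightarrow T$ rewrites the restriction $X(\iota_{T_x})(c)$ as $X(u_{T_x})(x)$; composing with $X(\phi_a)$ and using that $u_{T_x} \circ \phi_a = u_a$ by uniqueness of maps to $*$, this side also equals $X(u_a)(x)$. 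Since the two sides agree on every fibre, Lemma~\ref{lemma:presheaf_ext} yields the required equality in $X(S)$, establishing naturality in $S$. I expect the main obstacle to be purely bookkeeping: correctly relating the fibre decomposition of $g \circ \phi$ on $S$ to that of $g$ on $T$ through the factorization of $\phi$ through $T_x$, while keeping the variances of $X$ and of $\LocConst(-, X(*))$ straight throughout.
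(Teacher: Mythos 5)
Your proof is correct and follows essentially the same route as the paper: both apply Lemma~\ref{lemma:presheaf_ext} to the pulled-back locally constant map $g \circ \phi$, reduce one side via the defining property of the counit, and handle the other side by factoring the restriction of $\phi$ through the corresponding fibre of $g$ and invoking uniqueness of maps to the point. The only differences are notational (your $S, T, \phi, a, T_x, \phi_a$ are the paper's $T, S, g, T_i, S_j, g'$).
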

\begin{proof}
    Fix a condensed set $X$. Given a compact Hausdorff space $S$ we denote by $\epsilon_S$ the map 
    \[
        \LocConst(S, X(*)) \to X(S)
    \]
    defined in Construction~\ref{const:counit}. We need to show that for every continuous map $g : T \to S$ of compact Hausdorff spaces, the diagram
    \[\begin{tikzcd}
        {\LocConst(S, X(*))} & {X(S)} \\
        {\LocConst(T,X(*))} & {X(T)}
        \arrow["{\epsilon_S}", from=1-1, to=1-2]
        \arrow["{\circ g}"', from=1-1, to=2-1]
        \arrow["{X(g)}", from=1-2, to=2-2]
        \arrow["{\epsilon_T}"', from=2-1, to=2-2]
    \end{tikzcd}\]
    commutes. Let $f : S \to X(*)$ be a locally constant map. We need to show that 
    \[
        X(g)(\epsilon_S(f)) = \epsilon_T(f \circ g).
    \]
    We apply Lemma \ref{lemma:presheaf_ext} with $Y = X$ to the locally constant map $f \circ g : T \to X(*)$. Let 
    \[
        T \cong T_1 \amalg \cdots \amalg T_n
    \]
    be the decomposition of $T$ into the fibres of $f \circ g$ and denote by $\iota^T_i : T_i \to T$ the inclusion maps. Now we need to show that 
    \[
        X(\iota^T_i) \left(X(g)(\epsilon_S(f))\right) = 
        X(\iota^T_i)\left(\epsilon_T(f \circ g)\right).
    \]
    Let $x_i \in X(*)$ be the element of which $T_i$ is the fibre and let $t_i : T_i \to *$ denote the unique map. Then the above simplifies to 
    \[
        X(g \circ \iota^T_i)\left(\epsilon_S(f)\right) = X(t_i)(x_i). 
    \]
    Now, let 
    \[
        S \cong S_1 \amalg \cdots \amalg S_m
    \]
    be the decomposition of $S$ into the fibres of $f$. Denote by $\iota^S_j : S_j \to S$ the inclusion maps. Let $j$ be such that $S_j = f^{-1}(x_i)$. Then $g$ restricts to a continuous map $g' : T_i \to S_j$, and we have $g \circ \iota^T_i = \iota^S_j \circ g'$. Letting $s_j : S_j \to *$ denote the unique map, we now have
    \begin{align*}
        X(g \circ \iota^T_i)\left(\epsilon_S(f)\right) 
            &= X(g')\left(X(\iota^S_j)(\epsilon_S(f))\right) \\
            &= X(g')\left(X(s_j)(x_i)\right) \\ 
            &= X(s_j \circ g')(x_i) \\
            &= X(t_i)(x_i)
    \end{align*}
    as desired.
\end{proof}

\begin{lemma}\label{lemma:counit_natural_X} \href{https://github.com/leanprover-community/mathlib4/blob/ffb2f0dbf80ad18fa75f85bd0facee6737b59acf/Mathlib/Condensed/Discrete/LocallyConstant.lean#L250-L287}{\faExternalLink}
    The proposed counit defined in Construction~\ref{const:counit} is natural in the condensed set $X$.
\end{lemma}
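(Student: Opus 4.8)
The plan is to reduce, exactly as in Lemma~\ref{lemma:counit_natural_S}, to a single pointwise equation and then apply Lemma~\ref{lemma:presheaf_ext}. Write $\epsilon^X_S \colon \LocConst(S, X(*)) \to X(S)$ for the component of the proposed counit at the condensed set $X$ and the space $S$. Naturality in $X$ means that for every morphism $\phi \colon X \to Y$ of condensed sets the square relating $\epsilon^X$, $\epsilon^Y$, $\phi$, and $L(U(\phi))$ commutes. Since a morphism of condensed sets is a natural transformation of presheaves, it suffices to check commutativity on each compact Hausdorff space $S$, and since the sets in question consist of functions, it suffices to check on each locally constant map $f \colon S \to X(*)$. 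The goal thus becomes the equation
\[
    \phi_S\bigl(\epsilon^X_S(f)\bigr) = \epsilon^Y_S\bigl(U(\phi) \circ f\bigr),
\]
both sides being elements of $Y(S)$.

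To prove this I would apply Lemma~\ref{lemma:presheaf_ext} with the presheaf taken to be $Y$ and the test map taken to be $f$ itself (rather than $U(\phi)\circ f$). Let $S \cong S_1 \amalg \cdots \amalg S_m$ be the decomposition into the fibres of $f$, with $S_j = f^{-1}(x_j)$, inclusions $\iota_j \colon S_j \to S$, and unique maps $s_j \colon S_j \to *$; it then suffices to show that $Y(\iota_j)$ agrees on the two sides for every $j$. On the left, naturality of $\phi$ for $\iota_j$ gives $Y(\iota_j)\bigl(\phi_S(\epsilon^X_S(f))\bigr) = \phi_{S_j}\bigl(X(\iota_j)(\epsilon^X_S(f))\bigr)$; the defining relation of Construction~\ref{const:counit} (equivalently, naturality in $S$ from Lemma~\ref{lemma:counit_natural_S} applied to $\iota_j$, whose source restricts $f$ to the constant value $x_j$) identifies $X(\iota_j)(\epsilon^X_S(f))$ with $X(s_j)(x_j)$, and a second use of naturality of $\phi$, now for $s_j$, rewrites $\phi_{S_j}(X(s_j)(x_j))$ as $Y(s_j)\bigl(U(\phi)(x_j)\bigr)$.

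For the right-hand side, the subtlety is that $\epsilon^Y_S(U(\phi)\circ f)$ is defined using the fibre decomposition of $U(\phi)\circ f$, which is in general strictly coarser than that of $f$ because $U(\phi)$ need not be injective. The fibre $S_j = f^{-1}(x_j)$ is contained in the $(U(\phi)\circ f)$-fibre over $U(\phi)(x_j)$, so the inclusion $\iota_j$ factors through the inclusion of that larger fibre. Applying $Y$ to this factorization and invoking the defining relation of Construction~\ref{const:counit} for $\epsilon^Y$ on the coarse decomposition collapses the restriction of $\epsilon^Y_S(U(\phi)\circ f)$ to $S_j$ to $Y(s_j)\bigl(U(\phi)(x_j)\bigr)$, matching the left-hand side. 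I expect this bookkeeping around the two incompatible partitions --- and, in the formalization, ensuring that the inclusion of an $f$-fibre into $S$ really does factor through the relevant $(U(\phi)\circ f)$-fibre in a way \Lean accepts --- to be the main obstacle; everything else is naturality of $\phi$ together with the explicit formula for the counit.
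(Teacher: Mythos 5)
Your proof is correct, but it is structured differently from the paper's. Both arguments reduce the goal to a fibrewise check via Lemma~\ref{lemma:presheaf_ext}; the difference is the choice of decomposition. The paper applies the lemma with the \emph{coarse} decomposition of $S$ into the fibres of $g_* \circ f$ (your $U(\phi)\circ f$), which makes the right-hand side $\epsilon_Y(g_*\circ f)$ directly computable on each piece but leaves $g_S(\epsilon_X(f))$ opaque there, since $f$ need not be constant on a coarse fibre; to handle this, the paper applies Lemma~\ref{lemma:presheaf_ext} a \emph{second} time inside each coarse fibre (decomposing it by the restriction of $f$), relates the resulting doubly-indexed pieces to the fibres of $f$ on all of $S$, and closes with the pointwise computation $g_*(x'_k)=y_i$. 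You instead apply the lemma once, with the \emph{fine} decomposition into the fibres of $f$ itself, which makes the left-hand side directly computable (defining relation of the counit plus naturality of $\phi$ twice), and you absorb the mismatch on the right-hand side by factoring the inclusion of each $f$-fibre through the $(U(\phi)\circ f)$-fibre containing it. The two proofs carry the same essential content --- in particular both need the fibre-inside-fibre factorization you flag as the main formalization obstacle, which appears in the paper's proof as the inclusion $S_{i,j}\subseteq S'_k$ --- but yours needs only one application of the ext lemma and no nested decomposition, so it is the more economical argument on paper; the paper's version keeps the ext lemma pointed at the locally constant map whose fibres define the element being tested, which is the more mechanical choice and is the route the \Lean proof takes.
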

\begin{proof}
    Let $S$ be a compact Hausdorff space. Given any condensed set $X$, denote by $\epsilon_X$ the map 
    \[
        \LocConst(S, X(*)) \to X(S)
    \]
    defined in Construction~\ref{const:counit}. Let $g : X \to Y$ be a morphism of condensed sets. We need to show that the diagram
    \[\begin{tikzcd}
        {\LocConst(S, X(*))} & {X(S)} \\
        {\LocConst(S,Y(*))} & {Y(S)}
        \arrow["{\epsilon_X}", from=1-1, to=1-2]
        \arrow["{g_*\circ}"', from=1-1, to=2-1]
        \arrow["{g_S}", from=1-2, to=2-2]
        \arrow["{\epsilon_Y}"', from=2-1, to=2-2]
    \end{tikzcd}\]
    commutes (here, $g_T$ denotes the component of the natural transformation $g$ at a compact Hausdorff space $T$). Let $f : S \to X(*)$ be a locally constant map. We need to show that 
    \[
        g_S (\epsilon_X(f)) = \epsilon_Y(g_* \circ f).
    \]
    We apply Lemma \ref{lemma:presheaf_ext} with $X = Y$ to the locally constant map $g_* \circ f : S \to Y(*)$. Let 
    \[
        S \cong S_1 \amalg \cdots \amalg S_n
    \]
    be the decomposition of $S$ into the fibres of $g_* \circ f$ and denote by $\iota_i : S_i \to S$ the inclusion maps. Now we need to show that 
    \[
        Y(\iota_i) \left(g_S (\epsilon_X(f))\right) = Y(\iota_i)\left(\epsilon_Y(g_* \circ f)\right).
    \]
    Let $y_i \in Y(*)$ be the element of which $S_i$ is the fibre and let $s_i : S_i \to *$ denote the unique map. Then the above simplifies to 
    \[
        Y(\iota_i) \left(g_S (\epsilon_X(f))\right) = Y(s_i)(y_i).
    \]
    This is a question of proving equality of two elements of the set $Y(S_i)$. We apply Lemma \ref{lemma:presheaf_ext} again, this time to the locally constant map $f \circ \iota_i : S_i \to X(*)$ (and the roles of $X$ and $Y$ swapped with respect to the names of the condensed sets in the lemma statement). Let 
    \[
        S_i \cong S_{i, 1} \amalg \cdots \amalg S_{i, m}
    \]
    be the decomposition of $S_i$ into the fibres of $f \circ \iota_i$ and denote by $\iota_{i, j} : S_{i, j} \to S_i$ the inclusion maps and by $s_{i, j} : S_{i, j} \to *$ the unique map. Then we need to show that 
    \[
        Y(\iota_i \circ \iota_{i, j}) \left(g_S (\epsilon_X(f))\right) = Y(s_{i, j})(y_i).
    \]
    By naturality of $g$ the goal now becomes
    \[
        g_{S_{i, j}}(X(\iota_i \circ \iota_{i, j})(\epsilon_X(f))) = Y(s_{i, j})(y_i)
    \]
    Now, let
    \[
        S \cong S'_1 \amalg \cdots \amalg S'_r
    \]
    be the decomposition of $S$ into the fibres of $f$. Denote by $\iota'_{k} : S'_{k} \to S$ the inclusion maps. There exists a $k$ such that $S_{i, j} \subseteq S'_k$, denote the inclusion map by $\iota'$. Let $x'_k \in X(*)$ be the element of which $S'_k$ is the fibre, and denote by $s'_k : S'_k \to *$ the unique map. Then we have
    \begin{align*}
        X(\iota_i \circ \iota_{i, j})(\epsilon_X(f))
            &= X(\iota'_{k} \circ \iota')(\epsilon_X(f)) \\
            &= X(\iota')(X(\iota'_k)(\epsilon_X(f))) \\
            &= X(\iota')(X(s'_k)(x'_k)) \\ 
            &= X(s_{i, j})(x'_k)
    \end{align*}
    The goal now becomes
    \[
        g_{S_{i, j}}(X(s_{i, j})(x'_k)) = Y(s_{i, j})(y_i)
    \]
    Applying naturality of $g$ again, it reduces to
    \[
        Y(s_{i, j})(g_*(x'_k)) = Y(s_{i, j})(y_i),
    \]
    so it suffices to show that $g_*(x'_k) = y_i$. Let $s \in S_{i, j}$ be some element. Then since $s \in S'_k$, we have $f(s) = x'_k$. Thus, since $s \in S_i$, we have $g_*(x'_k) = g_*(f(s)) = y_i$, as desired.
\end{proof}

\subsection{Triangle identities}
To prove that the natural transformations defined in Constructions~\ref{const:unit}, \ref{const:counit} form the unit and counit of the desired adjunction, we need to verify the triangle identities. Throughout this subsection, we let $\eta_X$ denote the component of the unit at a set $X$ and $\epsilon_{X, S}$ the component of the counit at a condensed set $X$ and compact Hausdorff space $S$. Recall the notation $L : \Set \to \CondSet$ for the functor mapping a set $X$ to the sheaf of locally constant maps to $X$ and $U : \CondSet \to \Set$ for the underlying set functor, mapping a condensed set $X$ to $X(*)$.

\begin{lemma}\label{lemma:left_triangle} \href{https://github.com/leanprover-community/mathlib4/blob/ffb2f0dbf80ad18fa75f85bd0facee6737b59acf/Mathlib/Condensed/Discrete/LocallyConstant.lean#L329-L336}{\faExternalLink}
    Let $X$ be a set. We have
    \[
        \epsilon_{L(X)} \circ L(\eta_X) = {\mathsf{id}}_{L(X)}
    \]
\end{lemma}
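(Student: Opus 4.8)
The plan is to verify the identity componentwise. Since $\epsilon_{L(X)} \circ L(\eta_X)$ is a morphism of condensed sets, it suffices to check that its component at every compact Hausdorff space $S$ equals the identity of $\LocConst(S,X)$, i.e.\ that
\[
  \epsilon_{L(X),S} \circ L(\eta_X)_S = \mathsf{id}_{\LocConst(S,X)}.
\]
Here $L(\eta_X)_S : \LocConst(S,X) \to \LocConst(S,\LocConst(*,X))$ is postcomposition with the unit $\eta_X$, while $\epsilon_{L(X),S} : \LocConst(S,\LocConst(*,X)) \to \LocConst(S,X)$ is the component of the counit at the condensed set $L(X)$, described by Construction~\ref{const:counit} applied with the condensed set taken to be $L(X)$ (so that $L(X)(*) = \LocConst(*,X)$, and $L(X)$ applied to a map is precomposition with that map).

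Fix a locally constant map $\phi : S \to X$, i.e.\ an element of $L(X)(S)$. Applying $L(\eta_X)_S$ yields $\eta_X \circ \phi : S \to \LocConst(*,X)$, whose value at $s$ is the constant map at $\phi(s)$. Because $\eta_X$ is injective — indeed an isomorphism, by Construction~\ref{const:unit} — two points of $S$ have the same image under $\eta_X \circ \phi$ exactly when they have the same image under $\phi$. Hence the fibre decomposition $S \cong S_1 \amalg \cdots \amalg S_n$ of $\eta_X \circ \phi$ coincides with that of $\phi$, and $\phi$ takes a constant value $c_i \in X$ on each $S_i$.

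To conclude that $\epsilon_{L(X),S}(\eta_X \circ \phi) = \phi$, I would invoke Lemma~\ref{lemma:presheaf_ext} for the condensed set $L(X)$ and the map $\eta_X \circ \phi$: it suffices to check that both elements agree after restriction along each fibre inclusion $\iota_i : S_i \to S$, that is, that $L(X)(\iota_i)$ sends them to the same element of $\LocConst(S_i,X)$. Unfolding Construction~\ref{const:counit}, the restriction of $\epsilon_{L(X),S}(\eta_X \circ \phi)$ to $S_i$ is $L(X)(g_i)\bigl(\eta_X(c_i)\bigr)$, where $g_i : S_i \to *$ is the unique map and $\eta_X(c_i)$ is the constant value of $\eta_X \circ \phi$ on $S_i$; since $L(X)(g_i)$ is precomposition with $g_i$, this equals $\eta_X(c_i) \circ g_i$, the constant map $S_i \to X$ at $c_i$. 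On the other hand, $L(X)(\iota_i)(\phi) = \phi \circ \iota_i$ is the restriction of $\phi$ to $S_i$, which is likewise the constant map at $c_i$. The two agree, so by Lemma~\ref{lemma:presheaf_ext} the elements are equal.

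The only real subtlety — and the point at which the formalization must be set up with care — is the bookkeeping in the final step: one must record that the fibre partition used by the counit for $\eta_X \circ \phi$ is exactly the fibre partition of $\phi$ (this is where injectivity of $\eta_X$ enters), and then unfold Construction~\ref{const:counit} to recognize the restricted value as the constant map at $c_i$. Reconciling these two descriptions of the restriction to $S_i$ — one coming from the definition of the counit, the other from the fibrewise constancy of $\phi$ — through Lemma~\ref{lemma:presheaf_ext} is the crux of the argument; everything else is routine.
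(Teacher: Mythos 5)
Your proof is correct and takes essentially the same approach as the paper's: both apply Lemma~\ref{lemma:presheaf_ext} (with both condensed sets taken to be $L(X)$) to the locally constant map $\eta_X \circ f$, decompose $S$ into its fibres, unfold Construction~\ref{const:counit} on each fibre, and verify that both restrictions are the constant map at the value of $f$ there. Your preliminary observation that the fibre decomposition of $\eta_X \circ \phi$ coincides with that of $\phi$ (via injectivity of $\eta_X$) is a harmless repackaging of the paper's direct pointwise computation $x_i(s_i(s)) = f(s) = f(\iota_i(s))$.
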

\begin{proof}
    We need to show that for every compact Hausdorff space $S$ and every locally constant map $f : S \to X$, 
    \[
        \epsilon_{L(X), S}(\eta_X \circ f) = f.
    \]
    We apply Lemma~\ref{lemma:presheaf_ext} with $X$ and $Y$ both set to $L(X)$, to the locally constant map $\eta_X \circ f : S \to \LocConst(*, X)$. Let
    \[
        S \cong S_1 \amalg \cdots \amalg S_n
    \]
    be the decomposition of $S$ into the fibres of $\eta_X \circ f$ and denote by $\iota_i : S_i \to S$ the inclusion maps. Now we need to show that 
    \[
        L(X)(\iota_i) \left(\epsilon_{L(X), S}(\eta_X \circ f)\right) = L(X)(\iota_i)(f)
    \]
    Let $x_i \in \LocConst(*, X)$ be the element of which $S_i$ is the fibre and let $s_i : S_i \to *$ denote the unique map. Then the above simplifies to 
    \[
        x_i \circ s_i = f \circ \iota_i.
    \]
    Let $s \in S_i$ be arbitrary. We have 
    \[
        x_i(s_i(s)) = f(s) = f(\iota_i(s))
    \]
    as desired.
\end{proof}

\begin{lemma}\label{lemma:right_triangle} \href{https://github.com/leanprover-community/mathlib4/blob/ffb2f0dbf80ad18fa75f85bd0facee6737b59acf/Mathlib/Condensed/Discrete/LocallyConstant.lean#L337-L352}{\faExternalLink}
    Let $X$ be a condensed set. We have
    \[
        U(\epsilon_X) \circ \eta_{U(X)} = \mathsf{id}_{U(X)}
    \]
\end{lemma}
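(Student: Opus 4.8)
The plan is to evaluate both sides of the identity at an arbitrary element $a \in U(X) = X(*)$ and verify directly that the composite sends $a$ back to $a$. First I would use that $U$ is evaluation at the terminal object $*$: this identifies the morphism $U(\epsilon_X)$ with the component $\epsilon_{X, *} : \LocConst(*, X(*)) \to X(*)$ of the counit at the compact Hausdorff space $*$, and it identifies $\eta_{U(X)}$ with the unit $\eta_{X(*)} : X(*) \to \LocConst(*, X(*))$ at the set $X(*)$. By Construction~\ref{const:unit}, $\eta_{X(*)}$ sends $a$ to the constant map $* \to X(*)$ at $a$, which I denote $c_a$. Thus the triangle identity reduces to the claim that $\epsilon_{X, *}(c_a) = a$ for every $a \in X(*)$.

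To establish this claim I would unfold Construction~\ref{const:counit} applied to the locally constant map $c_a : * \to X(*)$. Because the domain $*$ is a single point, the fibre decomposition of $c_a$ is trivial: there is a unique fibre $S_1 = c_a^{-1}\{a\} = *$, the corresponding element of $X(*)$ is $a$, and the unique map $S_1 \to *$ is the identity $\mathsf{id}_*$. Construction~\ref{const:counit} then outputs $X(\mathsf{id}_*)(a) = a$, which under the (identity) isomorphism $X(*) \cong X(S_1)$ is exactly $a$. This gives $\epsilon_{X, *}(c_a) = a$ and hence the desired identity.

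I do not expect a real obstacle here. Unlike Lemmas~\ref{lemma:counit_natural_S} and~\ref{lemma:left_triangle}, no nontrivial partition appears, so Lemma~\ref{lemma:presheaf_ext} is not needed: the space is pinned to the terminal object, whose only fibre decomposition is trivial. The sole care required is the definitional bookkeeping to confirm that $U(\epsilon_X)$ really unfolds to $\epsilon_{X, *}$ and that $\eta_{U(X)}$ unfolds to the constant-map assignment of Construction~\ref{const:unit}; once these identifications are made, the computation $\epsilon_{X, *}(c_a) = a$ is immediate from the definitions.
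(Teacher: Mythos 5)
Your proposal is correct and takes essentially the same approach as the paper: the paper's proof likewise reduces the identity to showing $\epsilon_{X,*}(\eta_{X(*)}(x)) = x$ for each $x \in X(*)$ and declares this trivial, while you merely spell out the triviality by unfolding Construction~\ref{const:counit} at the one-point space. One minor caveat: your aside that Lemma~\ref{lemma:presheaf_ext} is ``not needed'' holds for the informal argument, but the paper's remark after the lemma notes that the \Lean formalization does invoke it, with the trivial fibre decomposition of $*$.
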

\begin{proof}
    Let $x \in X(*)$. We need to show that 
    \[
        \epsilon_{X, *} (\eta_{X(*)} (x)) = x.
    \]
    This is trivial.
\end{proof}

\begin{remark}
    The proof of Lemma~\ref{lemma:right_triangle} in \Lean actually uses Lemma~\ref{lemma:presheaf_ext}, trivially decomposing $*$ into a disjoint union corresponding to the locally constant map $\eta_{X(*)}(x)$. This shows how important Lemma~\ref{lemma:presheaf_ext} is in proving anything at all about the values of the counit. 
\end{remark}

\subsection{The adjunction}
\begin{construction}\label{const:adjunction} \href{https://github.com/leanprover-community/mathlib4/blob/ffb2f0dbf80ad18fa75f85bd0facee6737b59acf/Mathlib/Condensed/Discrete/LocallyConstant.lean#L324-L352}{\faExternalLink}
    All the above assembles into an adjunction
    \[
        L \dashv U,
    \]
    with unit defined in Construction~\ref{const:unit}, counit given by Construction~\ref{const:counit} together with Lemmas~\ref{lemma:counit_natural_S} and~\ref{lemma:counit_natural_X}, and triangle identities by Lemmas~\ref{lemma:left_triangle} and \ref{lemma:right_triangle}.
\end{construction}

\begin{construction}\label{const:L_iso_D} \href{https://github.com/leanprover-community/mathlib4/blob/ffb2f0dbf80ad18fa75f85bd0facee6737b59acf/Mathlib/Condensed/Discrete/LocallyConstant.lean#L376-L377}{\faExternalLink}
    By uniqueness of adjoints, we obtain a natural isomorphism
    \[
        L \cong \underline{(-)},
    \]
    where $\underline{X}$ denotes the constant sheaf at a set $X$.
\end{construction}

\begin{proposition} \href{https://github.com/leanprover-community/mathlib4/blob/ffb2f0dbf80ad18fa75f85bd0facee6737b59acf/Mathlib/Condensed/Discrete/LocallyConstant.lean#L380-L389}{\faExternalLink}
    The functor $L : \Set \to \CondSet$ (and hence also $\underline{(-)}$ because of Construction~\ref{const:L_iso_D}) is fully faithful.
\end{proposition}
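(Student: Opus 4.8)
The plan is to deduce full faithfulness directly from the adjunction $L \dashv U$ assembled in Construction~\ref{const:adjunction}, using the principle that a left adjoint is fully faithful precisely when its unit is an isomorphism. All of the genuinely difficult work --- the construction of the counit in Construction~\ref{const:counit} and the verification of its naturality (Lemmas~\ref{lemma:counit_natural_S} and~\ref{lemma:counit_natural_X}) and of the triangle identities (Lemmas~\ref{lemma:left_triangle} and~\ref{lemma:right_triangle}) --- is already behind us, so this last step is essentially formal.

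First I would recall that the unit $\mathsf{Id}_{\Set} \to U \circ L$ of the adjunction was already observed to be a natural isomorphism in Construction~\ref{const:unit}. Inverting it produces exactly a natural isomorphism $U \circ L \cong \mathsf{Id}_{\Set}$, which is the hypothesis required by Corollary~\ref{cor:adjunction-generality}, instantiated with $R = U$. Applying that corollary to the adjunction $L \dashv U$ yields immediately that $L$ is fully faithful. (Equivalently, one may invoke the standard categorical fact that the unit of an adjunction is invertible if and only if the left adjoint is fully faithful; since we already know from Construction~\ref{const:unit} that the unit is an isomorphism, no appeal to Proposition~\ref{prop:adjunction-generality-2} is needed in this instance.)

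To obtain the parenthetical claim about $\underline{(-)}$, I would transport full faithfulness along the natural isomorphism $L \cong \underline{(-)}$ furnished by Construction~\ref{const:L_iso_D}. Since full faithfulness of a functor is preserved under natural isomorphism, it follows that $\underline{(-)}$ is fully faithful as well.

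There is no substantial obstacle here: the content of the argument resides entirely in the preceding constructions and lemmas, and this proposition merely packages them. The only points demanding a moment's care --- especially in the formalization --- are to match the unit of the assembled adjunction of Construction~\ref{const:adjunction} with the isomorphism exhibited in Construction~\ref{const:unit}, and to confirm that the abstract corollary is applied with $R$ correctly instantiated as the underlying-set functor $U \colon \CondSet \to \Set$.
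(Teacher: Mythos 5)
Your proof is correct and follows essentially the same route as the paper: the paper's proof is precisely the observation that the unit of the adjunction from Construction~\ref{const:adjunction} is an isomorphism (as noted in Construction~\ref{const:unit}), hence the left adjoint $L$ is fully faithful, with the statement about $\underline{(-)}$ transported along Construction~\ref{const:L_iso_D}. Your additional routing through Corollary~\ref{cor:adjunction-generality} is a harmless (and correctly instantiated) packaging of the same standard fact.
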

\begin{proof}
    This follows from the fact that the unit of the adjunction in Construction~\ref{const:adjunction} is an isomorphism.
\end{proof}

\subsection{Generality}\label{subsec:generality}
The formalization of the results described in this section was in fact mostly done for sheaves on a site of the form \lean{CompHausLike P} as described in \S\ref{sec:size}, and only at the very end specialized to condensed sets and light condensed sets. 

Some conditions on the predicate \lean{P} were required. A few were required simply for the coherent topology to exist, but the two most important ones were
\begin{lstlisting}
[∀ (S : CompHausLike.{u} P) (p : S → Prop), HasProp P (Subtype p)],
\end{lstlisting}
and
\begin{lstlisting}
[HasExplicitFiniteCoproducts P].
\end{lstlisting}
The first one says that every subtype of an object of the category \lean{CompHausLike P} satisfies the predicate \lean{P}. This could in fact be weakened to only requiring clopen subsets to satisfy it. This more restrictive assumption turned out to be more convenient to work with, because the weaker one required carrying around too many proofs that certain subsets were clopen. It was still good enough, because in the application, \lean{P} is either \lean{True}, or the predicate saying that the space is totally disconnected and second countable. These are both stable under taking subspaces. This is an example of the importance of finding the ``correct generality'' for formalized statements. One wants the results to hold in the greatest generality possible, while not making an unnecessary effort for more generality when the payoff is small.

The second one says that finite disjoint unions of spaces in the category \lean{CompHausLike P} exist, and form a coproduct. 

The purpose of both these assumptions is explained by the discussion following Lemma~\ref{lemma:presheaf_ext}. We need to be able to write an object of this category as a coproduct of the fibres of a locally constant map out of it, and we want the components of this coproduct to be literal subtypes of it.

\section{The colimit characterization}\label{sec:colimit}
Informally, one can state the main result of this section as follows: a condensed set $X$ is discrete if and only if for every profinite set $S = \varprojlim_i S_i$, the canonical map $\varinjlim_i X(S_i) \to X(S)$ is an isomorphism. One needs to clarify, however, what the phrase ``for every profinite set $S = \varprojlim_i S_i$'' means. This is done in Construction~\ref{const:asLimitCone} and Proposition~\ref{prop:asLimit} below. 

The condition that the canonical map $\varinjlim_i X(S_i) \to X(S)$ is an isomorphism is phrased in \Lean as saying that a cocone is colimiting (this is equivalent to the statement about the canonical map being an isomorphism, but more convenient in formalization as explained in Remark~\ref{rem:cocone_vs_informal} below). We say that a presheaf $X$ on $\Profinite$ satisfies \emph{the colimit condition} if this holds (Definition~\ref{def:colimit_condition}). 

\subsection{Profinite sets as limits}\label{subsec:profinite_as_limit}
The material in this section was mostly formalized by Adam Topaz and Calle Sönne, and was already in \mathlib before the start of this project. We do not write out informal proofs and refer instead to the \Lean code for details.

\begin{definition}\label{def:discrete_quotient} \href{https://github.com/leanprover-community/mathlib4/blob/ffb2f0dbf80ad18fa75f85bd0facee6737b59acf/Mathlib/Topology/DiscreteQuotient.lean#L69-L72}{\faExternalLink} 
\href{https://github.com/leanprover-community/mathlib4/blob/ffb2f0dbf80ad18fa75f85bd0facee6737b59acf/Mathlib/Topology/DiscreteQuotient.lean#L102}{\faExternalLink} 
\href{https://github.com/leanprover-community/mathlib4/blob/ffb2f0dbf80ad18fa75f85bd0facee6737b59acf/Mathlib/Topology/DiscreteQuotient.lean#L110-L111}{\faExternalLink}
    Let $X$ be a topological space. A \emph{discrete quotient} of $X$ is an equivalence relation on $X$ with open fibres. Let $r$ be a discrete quotient on $X$ and denote by $X_r$ its set of equivalence classes. We obtain a \emph{projection map} $\pi_r : X \to X_r$, $x \mapsto [x]$. Give $X_r$ the discrete topology. Then the projection map $\pi_r : X \to X_r$ is a quotient map.
\end{definition}

\begin{construction}\label{const:dq_le} \href{https://github.com/leanprover-community/mathlib4/blob/ffb2f0dbf80ad18fa75f85bd0facee6737b59acf/Mathlib/Topology/DiscreteQuotient.lean#L137-L145}{\faExternalLink} \href{https://github.com/leanprover-community/mathlib4/blob/ffb2f0dbf80ad18fa75f85bd0facee6737b59acf/Mathlib/CategoryTheory/Filtered/Basic.lean#L522-L523}{\faExternalLink}
    Let $r, s$ be two discrete quotients of a topological space $X$. We say that $r \leq s$ if for all $x, y \in X$, $r(x, y) \implies s(x,y)$. This gives the set of discrete quotients of $X$ the structure of an inf-semilattice, i.e.~a partial order with greatest lower bound. This makes the category structure inherited from this order cofiltered. 
\end{construction}

\begin{remark}\label{remark:dq_le} \href{https://github.com/leanprover-community/mathlib4/blob/ffb2f0dbf80ad18fa75f85bd0facee6737b59acf/Mathlib/Topology/DiscreteQuotient.lean#L211-L213}{\faExternalLink}
    One can interpret the order on the discrete quotients as follows. Using the same notation as in Definition~\ref{def:discrete_quotient} and Construction~\ref{const:dq_le}, if $r \leq s$, then there exists a map 
    $X_r \to X_s$ making the triangle 
    \[\begin{tikzcd}
        & X \\
        {X_r} && {X_s}
        \arrow["{\pi_r}"', from=1-2, to=2-1]
        \arrow["{\pi_s}", from=1-2, to=2-3]
        \arrow[from=2-1, to=2-3]
    \end{tikzcd}\]
    commute.
\end{remark}

\begin{construction}\label{const:fintypeDiagram} \href{https://github.com/leanprover-community/mathlib4/blob/ffb2f0dbf80ad18fa75f85bd0facee6737b59acf/Mathlib/Topology/Category/Profinite/AsLimit.lean#L43-L48}{\faExternalLink}
    Let $S$ be a profinite set. We define a functor from the category of discrete quotients of $S$ to the category of finite sets by mapping a discrete quotient $i$ of $S$ to the set $S_i$ of equivalence classes (this lands in finite sets because $S$ is compact), and a morphism $i \leq j$ to the map $S_i \to S_j$ described in Remark~\ref{remark:dq_le}.
\end{construction}

\begin{construction}\label{const:asLimitCone} \href{https://github.com/leanprover-community/mathlib4/blob/ffb2f0dbf80ad18fa75f85bd0facee6737b59acf/Mathlib/Topology/Category/Profinite/AsLimit.lean#L55-L57}{\faExternalLink}
    We define a cone on the functor $i \mapsto S_i$ described in Construction~\ref{const:fintypeDiagram} in the category $\Profinite$. The cone point is the profinite set $S$ and the projection maps are the projections $\pi_i : S \to S_i$ defined in Definition~\ref{def:discrete_quotient}. 
\end{construction}

\begin{proposition}\label{prop:asLimit} \href{https://github.com/leanprover-community/mathlib4/blob/ffb2f0dbf80ad18fa75f85bd0facee6737b59acf/Mathlib/Topology/Category/Profinite/AsLimit.lean#L91-L92}{\faExternalLink}
    The cone described in Construction~\ref{const:asLimitCone} is limiting. 
\end{proposition}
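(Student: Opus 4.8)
The plan is to show that the canonical comparison map from $S$ to the limit of the diagram is an isomorphism in $\Profinite$, which is equivalent to the cone of Construction~\ref{const:asLimitCone} being limiting. Since limits of profinite sets are computed as in compact Hausdorff spaces, the limit $\varprojlim_i S_i$ is the closed subspace of $\prod_i S_i$ consisting of compatible tuples, and the comparison map $\phi : S \to \varprojlim_i S_i$ sends a point $x$ to the tuple $([x]_i)_i$ of its equivalence classes under the various discrete quotients. Both $S$ and $\varprojlim_i S_i$ are compact Hausdorff, so it suffices to prove that $\phi$ is a continuous bijection; a continuous bijection of compact Hausdorff spaces is automatically a homeomorphism, hence an isomorphism in $\Profinite$.

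Continuity of $\phi$ is immediate from the continuity of each projection $\pi_i$ (Definition~\ref{def:discrete_quotient}). For injectivity I would use that $S$, being profinite, is totally disconnected, so any two distinct points $x \neq y$ are separated by a clopen set $U$ with $x \in U$ and $y \notin U$. The partition $\{U, S \setminus U\}$ is a discrete quotient $i$ of $S$, since its fibres are open, and for this $i$ we have $[x]_i \neq [y]_i$, so $\phi(x) \neq \phi(y)$.

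For surjectivity, let $(s_i)_i$ be a compatible family. Each fibre $\pi_i^{-1}(s_i)$ is a nonempty closed subset of $S$ (nonempty because $\pi_i$ is surjective, being a quotient map). Compatibility together with Remark~\ref{remark:dq_le} shows that whenever $i \leq j$ one has $\pi_i^{-1}(s_i) \subseteq \pi_j^{-1}(s_j)$, and since the category of discrete quotients is cofiltered (Construction~\ref{const:dq_le}) the family of these fibres is downward directed: given finitely many indices $i_1, \dots, i_n$, their meet $i = i_1 \wedge \cdots \wedge i_n$ satisfies $\pi_i^{-1}(s_i) \subseteq \bigcap_k \pi_{i_k}^{-1}(s_{i_k})$, so the family has the finite intersection property. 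By compactness of $S$ the total intersection $\bigcap_i \pi_i^{-1}(s_i)$ is nonempty, and any point $x$ in it satisfies $\phi(x) = (s_i)_i$.

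The main obstacle is the surjectivity step: one has to recognize a compatible tuple as describing a cofiltered system of nonempty closed fibres and then invoke compactness via the finite intersection property. Injectivity and continuity are comparatively routine, resting only on total disconnectedness (clopen sets separate points) and on the definition of $\phi$. Assembling these into the statement that the cone of Construction~\ref{const:asLimitCone} is limiting is then a matter of transporting the universal property of $\varprojlim_i S_i$ across the homeomorphism $\phi$.
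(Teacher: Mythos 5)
Your proposal is correct, but note that the paper itself contains no informal proof of this proposition: \S\ref{subsec:profinite_as_limit} explicitly states that this material was formalized before the project began and ``we do not write out informal proofs and refer instead to the \Lean code for details.'' Comparing against that formalization, your argument is essentially the same one: reduce to showing that the canonical comparison map $\phi : S \to \varprojlim_i S_i$ is an isomorphism, prove it is a continuous bijection, and use that a continuous bijection of compact Hausdorff spaces is a homeomorphism; injectivity comes from discrete quotients separating points, and surjectivity from the finite intersection property of the directed family of nonempty closed fibres $\pi_i^{-1}(s_i)$, exactly as in the \mathlib proof. One small point to make explicit: in your injectivity step, the passage from ``$S$ is totally disconnected'' to ``distinct points are separated by a clopen set'' is itself a (standard but nontrivial) theorem that uses compactness and the Hausdorff property --- total disconnectedness alone does not imply this for general topological spaces --- so this step silently invokes the same compactness machinery as your surjectivity argument.
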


\begin{lemma}\label{lemma:factor_through} \href{https://github.com/leanprover-community/mathlib4/blob/ffb2f0dbf80ad18fa75f85bd0facee6737b59acf/Mathlib/Topology/Category/Profinite/CofilteredLimit.lean#L189-L224}{\faExternalLink}
    Let $f : S \to \varprojlim_i T_i$ be a locally constant map from a profinite set $S$ to a cofiltered limit of profinite sets $T_i$. Then $f$ factors through one of the projections $\pi_i : \varprojlim_i T_i \to T_i$. 
\end{lemma}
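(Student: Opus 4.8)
The plan is to exploit that $f$, being locally constant on the compact space $\varprojlim_i T_i$, is governed by only finitely many clopen fibres, together with the structural fact that clopen subsets of a cofiltered limit of profinite sets are always pulled back from a single finite stage. We treat $f$ as a locally constant map out of the limit $\varprojlim_i T_i$ and aim to produce an index $i$ and a locally constant $g$ on $T_i$ with $f = g \circ \pi_i$. To set up, I would first record the shape of $f$: since $\varprojlim_i T_i$ is compact and $f$ is locally constant, its image is a finite set of values $y_1, \dots, y_n$, and the fibres $U_k := f^{-1}(y_k)$ form a finite partition of $\varprojlim_i T_i$ into clopen subsets.

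The key step is the claim that every clopen $U \subseteq \varprojlim_i T_i$ has the form $\pi_i^{-1}(V)$ for some index $i$ and some clopen $V \subseteq T_i$. This rests on the standard fact that the sets $\pi_i^{-1}(V)$, with $V \subseteq T_i$ clopen, form a basis of clopens for the limit topology; this is exactly where the explicit cofiltered presentation and the total disconnectedness of the $T_i$ are used. A clopen $U$ is compact, hence a finite union $\pi_{i_1}^{-1}(V_1) \cup \dots \cup \pi_{i_m}^{-1}(V_m)$ of basic clopens; cofilteredness of the index category supplies a single index $i$ dominating all the $i_\ell$, and pulling each $V_\ell$ back along the transition maps $T_i \to T_{i_\ell}$ rewrites $U$ as $\pi_i^{-1}(V)$ for a single clopen $V \subseteq T_i$.

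Applying this claim to each of the finitely many fibres $U_k$ and invoking cofilteredness once more, I would pass to a common index $i$ so that $U_k = \pi_i^{-1}(W_k)$ simultaneously, with each $W_k \subseteq T_i$ clopen. Replacing $W_k$ by $W_k \setminus (W_1 \cup \dots \cup W_{k-1})$ makes the $W_k$ pairwise disjoint without changing their preimages (since the $U_k$ are already disjoint, one checks $\pi_i^{-1}(W_k \setminus \bigcup_{k'<k} W_{k'}) = U_k$), so I may define $g : T_i \to \{y_1, \dots, y_n\}$ to equal $y_k$ on $W_k$ and a fixed value elsewhere; this $g$ is locally constant. Finally I would verify $f = g \circ \pi_i$ pointwise: if $x \in U_k$ then $\pi_i(x) \in W_k$ and lies in no other $W_{k'}$, whence $g(\pi_i(x)) = y_k = f(x)$.

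The main obstacle is the key clopen claim together with the cofiltered bookkeeping around it: combining the finitely many fibre indices into one common stage and organizing the pulled-back clopens into an honest decomposition of $T_i$ on which $g$ can be defined. In the formalization this is best handled by reducing, via induction on the cardinality of the image of $f$, to the two-valued case, which is precisely the assertion that a single clopen subset of the limit comes from a stage; the general $g$ is then assembled from this finite data.
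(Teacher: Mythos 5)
Your argument is correct and is essentially the proof in \mathlib that the paper defers to (the paper itself writes out no informal proof of this lemma): the heart in both is your key clopen claim --- every clopen subset of the limit is of the form $\pi_i^{-1}(V)$ for a single stage, proved via the basis of pulled-back clopens, compactness, and cofiltered bookkeeping --- after which the finitely many fibres of $f$ are combined at a common index, and the formalized proof indeed proceeds through the two-valued case exactly as you anticipate in your final paragraph. The only corner left implicit in your sketch is the degenerate case where the limit (hence the image of $f$) is empty while the target may also be empty, so that no ``fixed value elsewhere'' is available; there one notes that a cofiltered limit of nonempty compact Hausdorff spaces is nonempty, hence some stage $T_i$ is already empty and the empty map factors trivially.
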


\subsection{The colimit condition}

When we talk about ``a profinite set $S = \varprojlim_i S_i$'', we mean that $S$ is written as the limit of its discrete quotients as described in \S\ref{subsec:profinite_as_limit}. 

\begin{construction}\label{const:cocone}
    Let $X$ be a condensed set and let $S$ be a profinite set. We define a cocone on the functor $i \mapsto X(S_i)$ from the opposite category of discrete quotients of $S$ to $\Set$. The cocone point is $X(S)$ and the coprojections are given by the maps $X(\pi_i) : X(S_i) \to X(S)$. 
\end{construction}

\begin{definition}\label{def:colimit_condition}
    A presheaf of sets $X$ on $\Profinite$ \emph{satisfies the colimit condition} if for every profinite set $S$, the cocone described in Construction \ref{const:cocone} is colimiting.
\end{definition}

Construction~\ref{const:cocone} and Definition~\ref{def:colimit_condition} do not exist as explicit declarations in \mathlib. The colimit condition is expressed as the assumption
\begin{lstlisting}
∀ S : Profinite, IsColimit <| F.mapCocone S.asLimitCone.op
\end{lstlisting}
where \lean{S.asLimitCone.op} is the cocone obtained by taking the opposite of the cone in Construction~\ref{const:asLimitCone}, and \lean{F} is any presheaf on $\Profinite$. 

\begin{remark}\label{rem:cocone_vs_informal}
It is clear that a condensed set $X$ satisfies the colimit condition if and only if for every profinite set $S = \varprojlim_i S_i$ the canonical map $\varinjlim_i X(S_i) \to X(S)$ is an isomorphism (indeed, defining said ``canonical map'' requires the data of a cocone with cocone point $X(S)$ on the functor). This is closer to the way one would state the condition in informal mathematics. In formalized mathematics, it is often more convenient to work directly with cones and cocones. 
\end{remark}

We can now state the main theorem of this section more concisely and more precisely.

\begin{maintheorembody}\label{thm:main_colimit} \href{https://github.com/leanprover-community/mathlib4/blob/ffb2f0dbf80ad18fa75f85bd0facee6737b59acf/Mathlib/Condensed/Discrete/Characterization.lean#L74-L104}{\faExternalLink}
    A condensed set $X$ is discrete if and only if it satisfies the colimit condition.
\end{maintheorembody}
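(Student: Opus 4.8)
The plan is to work throughout with the concrete functor $L : \Set \to \CondSet$, $X' \mapsto \LocConst(-, X')$, rather than the abstract constant sheaf functor. By Theorem~\ref{thm:A2} together with the adjunction of Construction~\ref{const:adjunction}, a condensed set $X$ is discrete if and only if the counit $\epsilon_X : L(X(*)) \to X$ is an isomorphism (this is Proposition~\ref{prop:adjunction-generality-1} applied to $L \dashv U$, using that $L$ is fully faithful). Since a morphism of sheaves is an isomorphism precisely when it is so objectwise, and since a condensed set is determined by its values on profinite sets, it suffices to understand $\epsilon_{X,S}$ for profinite $S$. I will reduce everything to two colimit computations over the (opposite of the) cofiltered diagram of discrete quotients $i \mapsto S_i$ of a fixed profinite set $S = \varprojlim_i S_i$ (Construction~\ref{const:asLimitCone}, Proposition~\ref{prop:asLimit}): one for $X$ and one for $L(X(*))$.

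First I would prove the forward direction together with the auxiliary fact that $L(X')$ satisfies the colimit condition for \emph{every} set $X'$. Because the colimit condition is invariant under isomorphism and $X \cong L(X(*))$ when $X$ is discrete (Theorem~\ref{thm:A2}), the forward implication follows from this auxiliary fact. To prove it, I would show that the cocone of Construction~\ref{const:cocone} for $L(X')$, namely
\[
\LocConst(S_i, X') \xrightarrow{\;\circ\, \pi_i\;} \LocConst(S, X'),
\]
is colimiting. The essential input is that every locally constant map $S \to X'$ factors through some projection $\pi_i : S \to S_i$ (directly: $S$ is compact, so the image is finite and the partition of $S$ into the clopen fibres is a discrete quotient; compare Lemma~\ref{lemma:factor_through}), and that two such factorizations agree after passing to a common refinement, which is possible because the diagram is filtered. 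Noting that $\LocConst(S_i, X') = \Set(S_i, X')$ for the finite discrete sets $S_i$, this is exactly the statement that the above cocone is colimiting.

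For the backward direction, suppose $X$ satisfies the colimit condition and fix a profinite set $S = \varprojlim_i S_i$. By the auxiliary fact just proved, the cocone for $L(X(*))$ at $S$ is colimiting, and by hypothesis the cocone for $X$ at $S$ is colimiting. By naturality of the counit in the space variable (Lemma~\ref{lemma:counit_natural_S}), the components $\epsilon_{X,S_i}$ and $\epsilon_{X,S}$ assemble into a morphism between these two colimiting cocones. On each finite discrete set $S_i = \coprod_{s \in S_i} *$ the map $\epsilon_{X,S_i}$ is an isomorphism: both $L(X(*))$ and $X$ send finite coproducts to finite products, and a direct inspection of Construction~\ref{const:counit} identifies $\epsilon_{X,S_i}$ with the canonical bijection $\Set(S_i, X(*)) \cong \prod_{s} X(*) \cong X(S_i)$ (alternatively, the counit is a natural transformation of finite-product-preserving presheaves which is the identity at the point by the triangle identity of Lemma~\ref{lemma:right_triangle}, hence an isomorphism on all finite discrete sets). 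A morphism between colimiting cocones that is an isomorphism at every object of the indexing diagram is an isomorphism on cocone points, so $\epsilon_{X,S}$ is a bijection. As $S$ was arbitrary, $\epsilon_X$ is an isomorphism and $X$ is discrete.

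The main obstacle is the backward direction, and within it the clean organization of the two colimits: one must verify that the cocone structure carried by the counit (through its naturality in $S$) is literally the morphism of cocones comparing the colimiting cocone for $L(X(*))$ with that for $X$, so that the elementary principle ``an objectwise-iso morphism of colimiting cocones induces an iso on colimits'' applies. A secondary but genuine point of care is the identification of $\epsilon_{X,S_i}$ as an isomorphism on finite discrete sets, which relies on both presheaves preserving finite products and on tracing through the intricate definition of the counit in Construction~\ref{const:counit}; the preparatory Lemma~\ref{lemma:presheaf_ext} is precisely what makes such computations with the counit tractable.
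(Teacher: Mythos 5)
Your proposal is correct, but for the hard (backward) direction it takes a genuinely different route from the paper's. The paper's stated difficulty is exactly that pointwise isomorphisms $X(S) \cong \LocConst(S, X(*))$ do not automatically assemble into an isomorphism natural in $S$; it resolves this by building the isomorphism from scratch as a chain
\[
X \cong \Lan_{\iota^{\op}}(X \circ \iota^{\op}) \cong \Lan_{\iota^{\op}}\left(\LocConst(-,X(*)) \circ \iota^{\op}\right) \cong \LocConst(-,X(*)),
\]
where the outer isomorphisms rest on initiality of the functor $\dq(S) \to S/\iota$ (Construction~\ref{const:1st3rd}, via Lemma~\ref{lemma:factor_through} and Proposition~\ref{prop:initial-final-limit}) and the middle one on finite-product preservation (Construction~\ref{const:middle}). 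You sidestep the naturality problem entirely by observing that a globally natural comparison map already exists, namely the counit of $L \dashv U$; it then suffices to check that it is an isomorphism objectwise, which you do at each profinite $S$ by the standard fact that a morphism of colimiting cocones lying over a natural isomorphism of diagrams induces an isomorphism on cocone points, the diagram-level isomorphism being the counit on finite discrete sets (an isomorphism by the triangle identity of Lemma~\ref{lemma:right_triangle} plus finite-product preservation). Your route is more elementary --- no Kan extensions --- and reuses the infrastructure of \S\ref{sec:functors} (Constructions~\ref{const:counit} and~\ref{const:adjunction}, Lemma~\ref{lemma:counit_natural_S}); its one extra ingredient is the reduction from $\CompHaus$ to $\Profinite$ (a morphism of condensed sets is an isomorphism if and only if it is so on profinite sets, via the equivalence of the two sheaf categories), which the paper also uses implicitly when passing between condensed sets and presheaves on $\Profinite$. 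What the paper's Kan-extension formulation buys is a statement at the level of arbitrary finite-product-preserving presheaves on $\Profinite$, independent of the adjunction, together with reusable general machinery (initial functors, extension along $\FinSet \hookrightarrow \Profinite$) that is valuable in the formalization; both approaches share the same forward direction and the same key input, Lemma~\ref{lemma:factor_through}.
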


\subsection{Proof}\label{subsec:proof} 

The forward direction \href{https://github.com/leanprover-community/mathlib4/blob/ffb2f0dbf80ad18fa75f85bd0facee6737b59acf/Mathlib/Condensed/Discrete/Colimit.lean#L73-L75}{\faExternalLink} follows easily from material that was already in mathlib prior to this work. The argument is the following. Suppose $X$ is discrete. Then there is a set $Y$ and an isomorphism $X \cong \LocConst(-, Y)$. What one needs to show is that given a profinite set $S = \varprojlim_i S_i$, each locally constant map $f : S \to Y$, factors through a projection map $\pi_i : S \to S_i$, and if it factors through both $\pi_i$ and $\pi_j$, then there exists a $k$ with $\pi_k$ larger than both $\pi_i$ and $\pi_j$, such that $f$ factors through $\pi_k$. This follows from Lemma~\ref{lemma:factor_through} and surjectivity of the projection maps.

The difficult part is the other direction. Suppose that $X$ is a finite-product preserving presheaf on $\Profinite$ satisfying the colimit condition. It suffices to show that $X$ is isomorphic to the presheaf $\LocConst(-, X(*))$. Applying this to condensed sets, we get the characterization that a condensed set $X$ is discrete if and only if its underlying presheaf on $\Profinite$ satisfies the colimit condition, as desired.

It is not enough to give pointwise isomorphisms $X(S) \cong \LocConst(S, X(*))$ for each fixed profinite set $S$; we need to give such isomorphisms that are natural in $S$. As an intermediate step, we try to make the assignment $S \mapsto \varinjlim_i X(S_i)$ functorial in the profinite set $S$. The closest we can get to this is by using Kan extensions. 

The strategy now is to construct a sequence of isomorphisms as indicated \href{https://github.com/leanprover-community/mathlib4/blob/ffb2f0dbf80ad18fa75f85bd0facee6737b59acf/Mathlib/Condensed/Discrete/Colimit.lean#L248-L253}{\faExternalLink}
\[
    X \cong \Lan_{\iota^{\op}} (X \circ \iota^{\op}) \cong  \Lan_{\iota^{\op}} \left(\LocConst(-, X(*)) \circ \iota^{\op} \right) \cong \LocConst(-, X(*))
\]
where $\Lan_GH$ denotes the left Kan extension of $H$ along $G$ and $\iota : \FinSet \to \Profinite$ is the inclusion functor. Once this is established, we have proved Theorem~\ref{thm:main_colimit}.

The first and third isomorphisms 
\[
    X \cong \Lan_{\iota^{\op}} (X \circ \iota^{\op})
\] 
and 
\[
    \Lan_{\iota^{\op}} \left(\LocConst(-, X(*)) \circ \iota^{\op} \right) \cong \LocConst(-, X(*))
\]
come from Construction~\ref{const:1st3rd} below (the condensed set $\LocConst(-, X(*))$ satisfies the colimit condition as explained at the beginning of this proof, and $X$ does so by assumption). 

The middle isomorphism 
\[
    \Lan_{\iota^{\op}} (X \circ \iota^{\op}) \cong  \Lan_{\iota^{\op}} \left(\LocConst(-, X(*)) \circ \iota^{\op} \right)
\]
comes from Construction~\ref{const:middle} below, together with uniqueness of left Kan extensions up to isomorphism. 

\begin{construction}\label{const:middle} \href{https://github.com/leanprover-community/mathlib4/blob/ffb2f0dbf80ad18fa75f85bd0facee6737b59acf/Mathlib/Condensed/Discrete/Colimit.lean#L235-L242}{\faExternalLink} \href{https://github.com/leanprover-community/mathlib4/blob/ffb2f0dbf80ad18fa75f85bd0facee6737b59acf/Mathlib/Condensed/Discrete/Colimit.lean#L180-L186}{\faExternalLink}
    We construct an isomorphism $X \circ \iota^{\op} \cong \LocConst(-, X(*)) \circ \iota^{\op}$. Consider the functor $F : \FinSet \to \Set$ which takes a finite set $Y$ to the set $X(*)^Y$ of maps $Y \to X(*)$. We proceed by showing that both functors are isomorphic to this functor. The isomorphism $F \cong \LocConst(-, X(*)) \circ \iota^{\op}$ is trivial. The component of the isomorphism $X \circ \iota^{\op} \cong F$ at a finite set $Y$ is the following composition of isomorphisms
    \[
        X(Y) \cong X\left(\coprod_{y \in Y} *\right) \cong \prod_{y \in Y} X(*) \cong X(*)^Y.
    \]
    For the proof of naturality, we refer to the \Lean code.
\end{construction}

\begin{construction}\label{const:1st3rd} \href{https://github.com/leanprover-community/mathlib4/blob/ffb2f0dbf80ad18fa75f85bd0facee6737b59acf/Mathlib/Condensed/Discrete/Colimit.lean#L142-L145}{\faExternalLink}
    Let $F : \Profinite \to \Set$ be a presheaf satisfying the colimit condition (see Definition~\ref{def:colimit_condition}). We construct an isomorphism $F \cong \Lan_{\iota^{\op}} (F \circ \iota^{\op})$. 
    
    To explain the construction of this isomorphism more clearly, we introduce some notation. For a profinite set $S$, denote by $\dq(S)$ the poset of discrete quotients of $S$, and by $Q_S : \dq(S) \to \mathsf{FinSet}$ the functor $i \mapsto S_i$ defined in Construction~\ref{const:fintypeDiagram}. Let $S/\iota$ denote the over category consisting of morphisms $S \to \iota(X)$ in $\Profinite$ where $X$ runs over all finite sets. Denote by $P_S : S/\iota \to \mathsf{FinSet}$ the projection functor. Note that there is an obvious functor $\pi : \dq(S) \to S/\iota$ defined by mapping $i$ to $\pi_i : S \to S_i$. The triangle in the diagram
    \[\begin{tikzcd}
        {\dq(S)^{\op}} && {(S/\iota)^{\op}} \\
        & {\FinSet^{\op}} \\
        & \Set
        \arrow["\pi^{\op}", from=1-1, to=1-3]
        \arrow["{Q_S^{\op}}"', from=1-1, to=2-2]
        \arrow["{P_S^{\op}}", from=1-3, to=2-2]
        \arrow["{F\circ\iota^{\op}}", from=2-2, to=3-2]
    \end{tikzcd}\]
    is strictly commutative, and constructing the desired isomorphism amounts to constructing an isomorphism 
    \[
        \operatorname{colim} (F \circ \iota^{\op} \circ Q_S^{\op}) \cong \operatorname{colim} (F \circ \iota^{\op} \circ P_S^{\op})
    \]
    which is natural in the profinite set $S$. Omitting the proof of naturality, it suffices, by Proposition~\ref{prop:initial-final-limit}, to show that the functor $\pi$ is initial \href{https://github.com/leanprover-community/mathlib4/blob/ffb2f0dbf80ad18fa75f85bd0facee6737b59acf/Mathlib/Topology/Category/Profinite/Extend.lean#L85-L100}{\faExternalLink}. This amounts to showing that for every $S \to X \in S/\iota$, i.e.~for every continuous map from $S$ to a finite discrete set $X$, there exists a discrete quotient $i$ of $S$ with a map $S_i \to X$ such that the triangle
    \[\begin{tikzcd}
        & S \\
        {S_i} && X
        \arrow["{\pi_i}"', from=1-2, to=2-1]
        \arrow[from=1-2, to=2-3]
        \arrow[from=2-1, to=2-3]
    \end{tikzcd}\]
    commutes, and that for every discrete quotient $i$ of $S$ with two commutative triangles as above, there is a $j \leq i$ equalizing the two parallel maps in the diagram
    \[\begin{tikzcd}
        && S \\
        {S_j} & {S_i} && X
        \arrow["{\pi_j}"', from=1-3, to=2-1]
        \arrow["{\pi_i}", from=1-3, to=2-2]
        \arrow[from=1-3, to=2-4]
        \arrow[from=2-1, to=2-2]
        \arrow[shift left, from=2-2, to=2-4]
        \arrow[shift right, from=2-2, to=2-4]
    \end{tikzcd}\]
    The former condition follows from Lemma~\ref{lemma:factor_through} and the latter is trivial; one can always take $j = i$ because the projection map $\pi_i$ is surjective. 
\end{construction}

\subsection{Formalization}
To prove the colimit characterization of discrete condensed sets, we actually formalized more general constructions than those described informally in \S\ref{subsec:profinite_as_limit}-\S\ref{subsec:proof}. In what follows, we describe the formalization of this general setup. Throughout this section, three dots ``\lean{...}'' in a block of \Lean code signify an omitted proof.  

The functor $\pi$ described in Construction~\ref{const:1st3rd} is defined for any cone on a functor to the category of finite sets, and proved initial if the indexing category is cofiltered, the cone is limiting, and each projection map is epimorphic:
\begin{lstlisting}
variable {I : Type u} [SmallCategory I] {F : I ⥤ FintypeCat}
    (c : Cone <| F ⋙ toProfinite)

def functor : I ⥤ StructuredArrow c.pt toProfinite where
  obj i := StructuredArrow.mk (c.π.app i)
  map f := StructuredArrow.homMk (F.map f) (c.w f)

lemma functor_initial [IsCofiltered I] (hc : IsLimit c) [∀ i, Epi (c.π.app i)] : 
    Initial (functor c) := by
  ...
\end{lstlisting}
Then, given a presheaf $G$ on the category of profinite sets, valued in any category, a cocone is defined on the functor $F \circ \iota^{\op} \circ P_S^{\op}$ (here the notation is again borrowed from Construction~\ref{const:1st3rd}):
\begin{lstlisting}
variable {C : Type*} [Category C] (G : Profiniteᵒᵖ ⥤ C)

def cocone (S : Profinite) :
    Cocone (CostructuredArrow.proj toProfinite.op ⟨S⟩ ⋙ toProfinite.op ⋙ G) where
  pt := G.obj ⟨S⟩
  ι := {
    app := fun i ↦ G.map i.hom
    naturality := ... }

example : G.mapCocone c.op = (cocone G c.pt).whisker (functorOp c) := rfl
\end{lstlisting}
The \lean{example} below the definition is important: it shows that the cocone obtained by applying the presheaf \lean{G} to the cone \lean{c} is \emph{definitionally equal} to the cocone obtained by whiskering \lean{cocone G c.pt} with the functor $\pi^{\op}$ (in general, equality of cocones is not a desirable property to work with, but definitional equality is good). This means that under the same conditions that make the functor $\pi$ initial, together with an assumption that \lean{c} becomes a colimiting after applying \lean{G}, the cocone defined above is also colimiting:
\begin{lstlisting}
def isColimitCocone (hc : IsLimit c) [∀ i, Epi (c.π.app i)] 
    (hc' : IsColimit <| G.mapCocone c.op) : IsColimit (cocone G c.pt) := 
  (functorOp_final c hc).isColimitWhiskerEquiv _ _ hc'
\end{lstlisting}

We also provide the dual results for a covariant functor $G$ out of $\Profinite$. As an application of the dual (applied to the identity functor from $\Profinite$ to itself), we obtain a new way to write a profinite set as a cofiltered limit of finite discrete sets \href{https://github.com/leanprover-community/mathlib4/blob/ffb2f0dbf80ad18fa75f85bd0facee6737b59acf/Mathlib/Topology/Category/Profinite/Extend.lean#L192-L213}{\faExternalLink}.

\section{Discrete condensed $R$-modules}\label{sec:modules}
The goal of this section is to show that given any ring $R$, a condensed $R$-module is discrete if and only if its underlying condensed set is discrete. 

Consider the functor
\begin{align*}
    L : \Mod_R & \to \CondMod_R \\
    M & \mapsto \LocConst(-, M)
\end{align*}
from $R$-modules to condensed $R$-modules. We would like to make the analogous constructions to the ones in \S\ref{sec:functors}. A bad approach to this would be to try to repeat the whole story. With that approach, showing that $L$ is left adjoint to the forgetful functor $\CondMod_R \to \Mod_R$ would be significantly harder. One would have to prove that the counit is $R$-linear, and this would mean keeping track of the fibres of two locally constant maps and their addition. This is ill-suited for formalization, and the approach of reducing the problem for condensed modules to the case of underlying condensed sets is both easier, and provides the opportunity to formalize much more general results (e.g.~those in \S\ref{sec:constant}). 

We start by proving that the constant sheaf functor $\Mod_R \to \CondMod_R$ is fully faithful. Once that is established the result follows from Proposition~\ref{prop:isDiscrete_iff_forget} together with Proposition~\ref{prop:preservesSheafification_concrete}, because the underlying condensed set of a condensed $R$ module is given by postcomposing with the forgetful functor from $R$-modules to sets.

\begin{construction}\label{const:constantSheafModuleIso} \href{https://github.com/leanprover-community/mathlib4/blob/ffb2f0dbf80ad18fa75f85bd0facee6737b59acf/Mathlib/Condensed/Discrete/Module.lean#L119-L131}{\faExternalLink}
    We construct a natural isomorphism
    \[
        \underline{(-)} \cong L.
    \]
    For each $R$-module $M$, we need to give an isomorphism of condensed $R$-modules
    \[
        \underline{M} \cong \LocConst(-,M).
    \]
    This is given by the composition
    \[
        \underline{M} \cong \underline{\LocConst(*,M)} \cong \LocConst(-,M)
    \]
    where the first isomorphism is given by applying the constant sheaf functor to the obvious isomorphism of $R$-modules 
    \[
        M \cong \LocConst(*,M)
    \]
    and the second isomorphism is given by the counit of the constant sheaf adjunction. It remains to prove two things; that this counit is an isomorphism, and that the isomorphism constructed this way is natural in $M$. 
\end{construction}

\begin{lemma} \href{https://github.com/leanprover-community/mathlib4/blob/ffb2f0dbf80ad18fa75f85bd0facee6737b59acf/Mathlib/Condensed/Discrete/Module.lean#L111-L112}{\faExternalLink}
    Let $M$ be an $R$-module. The counit of the constant sheaf adjunction applied at the condensed $R$-module $\LocConst(-,M)$ is an isomorphism.
\end{lemma}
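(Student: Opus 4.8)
The plan is to reduce the claim to the already-established statement about condensed \emph{sets}. By Proposition~\ref{prop:isDiscrete_iff_mem_essImage} --- which applies because the constant sheaf functor $\Mod_R \to \CondMod_R$ was shown to be fully faithful at the start of this section, and $\CompHaus$ has a terminal object --- the counit of the constant sheaf adjunction at a condensed $R$-module $F$ is an isomorphism if and only if $F$ is constant. Taking $F = \LocConst(-, M)$, it therefore suffices to show that $\LocConst(-, M)$ is a constant condensed $R$-module.

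To prove this I would invoke Proposition~\ref{prop:isDiscrete_iff_forget} with $U$ the forgetful functor $\Mod_R \to \Set$. All of its hypotheses are in place: $U$ preserves sheafification by Proposition~\ref{prop:preservesSheafification_concrete} and has sheaf composition by Proposition~\ref{prop:hasSheafCompose_concrete}, the constant sheaf functor $\Set \to \CondSet$ is fully faithful by the results of \S\ref{sec:functors}, and the constant sheaf functor $\Mod_R \to \CondMod_R$ is fully faithful as just recalled. Proposition~\ref{prop:isDiscrete_iff_forget} then says that $\LocConst(-, M)$ is constant as a condensed $R$-module precisely when its underlying condensed set $U \circ \LocConst(-, M)$ is constant.

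The last step is to identify that underlying condensed set. A locally constant map from a compact Hausdorff space into $M$ is tautologically the same datum as a locally constant map into the underlying set of $M$, so $U \circ \LocConst(-, M)$ is canonically isomorphic to $\LocConst(-, U(M))$. By Theorem~\ref{thm:A2} --- equivalently, by the natural isomorphism $L \cong \underline{(-)}$ of Construction~\ref{const:L_iso_D} --- this condensed set lies in the essential image of $L \colon \Set \to \CondSet$ and is hence discrete, i.e.\ constant. Tracing the two equivalences backwards, $\LocConst(-, M)$ is constant and so the counit at it is an isomorphism.

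I do not expect a genuine obstacle: the substantive work has been front-loaded into the general machinery of \S\ref{sec:constant}, in particular Proposition~\ref{prop:isDiscrete_iff_forget}. The only points demanding care are the harmless identification $U \circ \LocConst(-, M) \cong \LocConst(-, U(M))$ and checking the hypotheses of Proposition~\ref{prop:isDiscrete_iff_forget} for the forgetful functor. This is exactly the payoff of having organized the theory so as to reduce the module statement to the set statement, rather than redoing the argument of \S\ref{sec:functors} with $R$-linearity threaded through the construction of the counit.
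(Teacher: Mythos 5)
Your proposal is circular relative to the paper's development. Both of your key steps --- Proposition~\ref{prop:isDiscrete_iff_mem_essImage} (counit is an isomorphism iff the sheaf is constant) and Proposition~\ref{prop:isDiscrete_iff_forget} (constant iff the underlying condensed set is constant) --- require as a hypothesis that the constant sheaf functor $\Mod_R \to \CondMod_R$ is fully faithful. That fact is \emph{not} available at this point: the sentence ``we start by proving that the constant sheaf functor is fully faithful'' at the opening of \S\ref{sec:modules} announces a plan, and the actual proof (Lemma~\ref{lemma:LModuleFF}) goes through Construction~\ref{const:constantSheafModuleIso}, i.e.\ through the natural isomorphism $\underline{(-)} \cong L$, whose construction lists the present lemma as one of the two things ``remaining to prove''. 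So full faithfulness on the module side is a downstream consequence of the lemma you are trying to prove, not an input to it; only the set-level full faithfulness from \S\ref{sec:functors} may be used. There is no independent argument for the module-level statement in the paper, and producing one directly is precisely the difficulty the paper's arrangement is designed to sidestep.

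The paper's own proof avoids module-level full faithfulness entirely: since $U : \Mod_R \to \Set$ reflects isomorphisms, the sheaf composition functor $F_U : \CondMod_R \to \CondSet$ reflects isomorphisms by Proposition~\ref{prop:reflectsIso_etc}(3), so it suffices to show that $F_U(\epsilon_{L(M)})$ is an isomorphism; by the commutative triangle of Lemma~\ref{lemma:constantSheafAdj_counit_w}, $F_U(\epsilon_{L(M)})$ agrees up to isomorphism with the counit for condensed \emph{sets} at $\LocConst(-,U(M))$, which is an isomorphism because that condensed set is discrete (\S\ref{sec:functors}) and the constant sheaf functor on $\Set$ is known to be fully faithful. (The full faithfulness hypotheses in the statement of Lemma~\ref{lemma:constantSheafAdj_counit_w} are not actually used in its proof, which rests only on Proposition~\ref{prop:preservesSheafification_diagram_iso} and the sheafification adjunction, so no circularity arises there.) Your argument could be repaired in the same spirit --- for instance by first transferring full faithfulness from $\Set$ to $\Mod_R$ via a unit analogue of Lemma~\ref{lemma:constantSheafAdj_counit_w} together with reflection of isomorphisms --- but that transfer is the real content of the lemma, and it is missing from your write-up.
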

\begin{proof}
    Denote this map by 
    \[\epsilon_{L(M)} : \underline{\LocConst(*,M)} \to \LocConst(-,M) \]
    Since the forgetful functor $U : \Mod_R \to \Set$ reflects isomorphisms, the ``forgetful functor'' $F_U : \CondMod_R \to \CondSet$ (given as the sheaf-composition functor associated to $U$) also reflects isomorphisms by Proposition~\ref{prop:reflectsIso_etc}(3). It is therefore enough to check that $F_U(\epsilon_{L(M)})$ is an isomorphism. Using Lemma~\ref{lemma:constantSheafAdj_counit_w}, it suffices to show that $\epsilon'_{U\circ L(M)}$ is an isomorphism, where $\epsilon'$ denotes the counit of the constant sheaf adjunction for condensed sets. But this is the same as saying that the condensed set $\LocConst(-,M)$ is discrete, which we already know. 
\end{proof}

\begin{lemma}\label{lemma:constantSheafModuleIso_natural} \href{https://github.com/leanprover-community/mathlib4/blob/ffb2f0dbf80ad18fa75f85bd0facee6737b59acf/Mathlib/Condensed/Discrete/Module.lean#L119-L131}{\faExternalLink}
    The isomorphism in Construction~\ref{const:constantSheafModuleIso} is natural in the $R$-module $M$.
\end{lemma}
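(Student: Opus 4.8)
The plan is to observe that the isomorphism constructed in Construction~\ref{const:constantSheafModuleIso} is, by its very definition, a vertical composite of two morphisms each of which is the component of a natural transformation; naturality of the composite is then automatic. Concretely, write $K : \Mod_R \to \Mod_R$ for the functor $M \mapsto \LocConst(*, M)$, and let $\phi : \operatorname{Id}_{\Mod_R} \to K$ be the transformation whose component $\phi_M : M \to \LocConst(*, M)$ sends an element $m$ to the constant map at $m$. The first isomorphism in the construction is then $\underline{(-)}$ applied to $\phi_M$, and the second is the counit $\epsilon_{L(M)}$ of the constant sheaf adjunction at $L(M) = \LocConst(-, M)$.

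First I would check that $\phi$ is a natural transformation of functors $\Mod_R \to \Mod_R$: for a morphism $g : M \to N$ we have $g \circ \mathrm{const}_m = \mathrm{const}_{g(m)}$, so the relevant naturality square commutes (and each $\phi_M$ is visibly an $R$-linear isomorphism). Post-composing (whiskering) with the constant sheaf functor yields a natural transformation $\underline{(-)} \circ \operatorname{Id} \to \underline{(-)} \circ K$ whose component at $M$ is $\underline{\phi_M}$, since whiskering a natural transformation by a functor again gives a natural transformation.

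Next I would use that the counit $\epsilon : \underline{(-)} \circ \mathsf{ev}_* \to \operatorname{Id}_{\CondMod_R}$ of the constant sheaf adjunction (Construction~\ref{const:constantSheafAdj}) is a natural transformation. Precomposing (whiskering) it with the functor $L : \Mod_R \to \CondMod_R$ gives a natural transformation whose component at $M$ is $\epsilon_{L(M)} : \underline{L(M)(*)} \to L(M)$; here I would invoke the definitional identification $L(M)(*) = \LocConst(*, M)$ so that the source matches $\underline{\LocConst(*, M)} = (\underline{(-)} \circ K)(M)$. The isomorphism of Construction~\ref{const:constantSheafModuleIso} is exactly the vertical composite of these two natural transformations, and is therefore itself natural in $M$.

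The main obstacle is not mathematical but one of bookkeeping: in the formalization one must arrange the whiskerings and the definitional equality $L(M)(*) = \LocConst(*, M)$ so that the target of the first transformation agrees on the nose with the source of the second, allowing the two to be composed as natural transformations rather than merely as pointwise-defined families of isomorphisms. Once the map is recognized as a genuine composite of natural transformations, naturality holds automatically and requires no further pointwise computation.
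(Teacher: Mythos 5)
Your proof is correct and takes essentially the same route as the paper: the paper's own proof consists of the single remark that naturality ``follows by unfolding the definitions and using some straightforward naturality arguments'' (deferring details to the \Lean code), and your identification of the isomorphism as the vertical composite of the whiskered transformations $\underline{(-)}\,\phi$ and $\epsilon L$ (modulo the definitional identification $L(M)(*) = \LocConst(*,M)$) is exactly that argument made explicit. No gaps.
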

\begin{proof}
    This follows by unfolding the definitions and using some straightforward naturality arguments. We refer to the \Lean code for more details. 
\end{proof}

\begin{lemma}\label{lemma:LModuleFF} \href{https://github.com/leanprover-community/mathlib4/blob/ffb2f0dbf80ad18fa75f85bd0facee6737b59acf/Mathlib/Condensed/Discrete/Module.lean#L143-L145}{\faExternalLink}
    The functor $L : \Mod_R \to \CondMod_R$ is fully faithful.
\end{lemma}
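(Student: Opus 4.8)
The plan is to reduce full faithfulness of $L$ to that of the constant sheaf functor $\underline{(-)} : \Mod_R \to \CondMod_R$, exploiting the natural isomorphism $\underline{(-)} \cong L$ already produced in Construction~\ref{const:constantSheafModuleIso} (shown to be natural in $M$ by Lemma~\ref{lemma:constantSheafModuleIso_natural}). Since full faithfulness of a functor is invariant under natural isomorphism — the maps on hom-sets induced by two naturally isomorphic functors differ only by pre- and post-composition with isomorphisms, so one is bijective exactly when the other is — it suffices to show that the constant sheaf functor $\underline{(-)}$ is fully faithful.

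To prove that $\underline{(-)}$ is fully faithful, I would invoke Corollary~\ref{cor:adjunction-generality} applied to the constant sheaf adjunction $\underline{(-)} \dashv \mathsf{ev}_*$ of Construction~\ref{const:constantSheafAdj} (here $\A = \Mod_R$, which has weak sheafification by Proposition~\ref{prop:hasSheafify_concrete}, and the defining site $\CompHaus$ has the point as terminal object). The corollary states that it is enough to exhibit a natural isomorphism $\mathsf{ev}_* \circ \underline{(-)} \cong \operatorname{Id}_{\Mod_R}$. Using the isomorphism $\underline{(-)} \cong L$ once more, this reduces to $\mathsf{ev}_* \circ L \cong \operatorname{Id}_{\Mod_R}$; concretely, $(\mathsf{ev}_* \circ L)(M) = \LocConst(*, M)$, and the evident $R$-linear isomorphism $\LocConst(*, M) \cong M$ sending a map out of the point to its unique value is natural in $M$. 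Composing these isomorphisms supplies the required $\mathsf{ev}_* \circ \underline{(-)} \cong \operatorname{Id}_{\Mod_R}$, and Corollary~\ref{cor:adjunction-generality} then yields full faithfulness of $\underline{(-)}$, hence of $L$.

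I expect essentially no genuine obstacle here, precisely because the substantive content has already been discharged: the fact established just above that the counit $\epsilon_{L(M)} : \underline{\LocConst(*,M)} \to \LocConst(-,M)$ is an isomorphism (which itself rests on Lemma~\ref{lemma:constantSheafAdj_counit_w} and the already-known discreteness of $\LocConst(-,M)$ as a condensed set), together with the naturality in Lemma~\ref{lemma:constantSheafModuleIso_natural}. The only point requiring mild care is organizational: ensuring that the chain of natural isomorphisms assembles into a genuine natural isomorphism of functors $\Mod_R \to \Mod_R$, so that the hypothesis of Corollary~\ref{cor:adjunction-generality} is met on the nose rather than merely pointwise. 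Given that bookkeeping, the lemma follows formally.
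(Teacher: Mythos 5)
Your proposal is correct and uses exactly the same ingredients as the paper's proof: the natural isomorphism $\underline{(-)} \cong L$ from Construction~\ref{const:constantSheafModuleIso}, the evident natural isomorphism $\LocConst(*,M) \cong M$, and Corollary~\ref{cor:adjunction-generality}. The only (immaterial) difference is organizational: the paper transports the constant sheaf adjunction along the isomorphism to get $L \dashv U$ and applies the corollary directly to $L$, whereas you apply the corollary to $\underline{(-)}$ and then transport full faithfulness across the isomorphism.
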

\begin{proof}
    Using construction~\ref{const:constantSheafModuleIso}, we obtain an adjunction $L \dashv U$ where $U$ is the forgetful functor $\CondMod_R \to \Mod_R$. The obvious isomorphism of $R$-modules 
    \[
        M \cong \LocConst(*,M)
    \]
    is clearly natural in $M$, i.e.~it induces a natural isomorphism $\operatorname{Id} \cong U \circ L$. Now the result follows from Corollary~\ref{cor:adjunction-generality}.
\end{proof}

Lemma~\ref{lemma:LModuleFF} together with the general Proposition~\ref{prop:isDiscrete_iff_forget} concludes the proof of the main result (it has been established in \S\ref{sec:sheafification} and \S\ref{sec:functors} that Proposition~\ref{prop:isDiscrete_iff_forget} applies in this situation):

\begin{maintheorembody}\label{thm:main_module} \href{https://github.com/leanprover-community/mathlib4/blob/ffb2f0dbf80ad18fa75f85bd0facee6737b59acf/Mathlib/Condensed/Discrete/Characterization.lean#L112-L115}{\faExternalLink}
    Let $R$ be a ring. A condensed $R$-module is discrete if and only if its underlying condensed set is discrete. \qed
\end{maintheorembody}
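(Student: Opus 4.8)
The plan is to deduce the statement directly from the general categorical criterion of Proposition~\ref{prop:isDiscrete_iff_forget}, rather than reproving anything by hand in the module setting. I would instantiate that proposition with the defining site $\C = \CompHaus$ under its coherent topology $J$, with $\A = \Mod_R$, $\B = \Set$, and $U : \Mod_R \to \Set$ the forgetful functor. The key observation is that the underlying condensed set of a condensed $R$-module $F$ is nothing but $U \circ F$ (the sheaf obtained by postcomposition with $U$, i.e.\ $F_U(F)$ in the notation of Proposition~\ref{prop:reflectsIso_etc}), so the biconditional ``$F$ constant iff $U \circ F$ constant'' furnished by Proposition~\ref{prop:isDiscrete_iff_forget} is exactly the desired equivalence between discreteness of $F$ and of its underlying condensed set.

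What remains is to check that the hypotheses of Proposition~\ref{prop:isDiscrete_iff_forget} are met in this situation. I would verify them one by one: both $\Mod_R$ and $\Set$ have weak sheafification with respect to $J$ by Proposition~\ref{prop:hasSheafify_concrete}; the forgetful functor $U$ has sheaf composition by Proposition~\ref{prop:hasSheafCompose_concrete} and preserves sheafification by Proposition~\ref{prop:preservesSheafification_concrete}; and $\CompHaus$ has a terminal object, the one-point space. The full faithfulness of the constant sheaf functor on the set side is already available, since $\underline{(-)} : \Set \to \CondSet$ is isomorphic to $L$ (Construction~\ref{const:L_iso_D}), which was proved fully faithful in \S\ref{sec:functors}.

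The hard part will be the last hypothesis: full faithfulness of the constant sheaf functor $\Mod_R \to \CondMod_R$. Here I would deliberately avoid re-running the delicate counit construction of \S\ref{sec:functors} with $R$-linearity imposed, which would force one to track the additive structure across the fibre decompositions of locally constant maps. Instead I would transport the result from the set case: construct $L : \Mod_R \to \CondMod_R$, $M \mapsto \LocConst(-, M)$, produce a natural isomorphism $\underline{(-)} \cong L$, and obtain an adjunction $L \dashv U$ whose unit is the manifestly natural isomorphism $M \cong \LocConst(*, M)$; Corollary~\ref{cor:adjunction-generality} then yields full faithfulness. The one genuinely nontrivial step inside this argument is showing that the relevant counit of the constant sheaf adjunction is an isomorphism, and I would handle it by applying Lemma~\ref{lemma:constantSheafAdj_counit_w} and using that $F_U$ reflects isomorphisms (Proposition~\ref{prop:reflectsIso_etc}(3)) to reduce to the already-established discreteness of the underlying condensed set $\LocConst(-, M)$. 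Once full faithfulness is in place, every hypothesis of Proposition~\ref{prop:isDiscrete_iff_forget} is satisfied and the theorem follows immediately.
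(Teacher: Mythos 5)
Your proposal is correct and follows essentially the same route as the paper: full faithfulness of the constant sheaf functor $\Mod_R \to \CondMod_R$ is obtained by transporting from the set case via the isomorphism $\underline{(-)} \cong L$ and Corollary~\ref{cor:adjunction-generality}, with the counit isomorphism reduced to the set case through Lemma~\ref{lemma:constantSheafAdj_counit_w} and Proposition~\ref{prop:reflectsIso_etc}(3), and the theorem then follows from Proposition~\ref{prop:isDiscrete_iff_forget} whose remaining hypotheses are supplied by Propositions~\ref{prop:hasSheafify_concrete}, \ref{prop:hasSheafCompose_concrete}, and~\ref{prop:preservesSheafification_concrete}. This matches the paper's proof in both structure and choice of key lemmas.
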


\section{Conclusion}\label{sec:summary} 
We summarize the characterization of discrete condensed sets in Theorem~\ref{thm:summary}, and that of discrete condensed modules over a ring in Theorem~\ref{thm:summary_mod}. 

\begin{theorem}\label{thm:summary} \href{https://github.com/leanprover-community/mathlib4/blob/ffb2f0dbf80ad18fa75f85bd0facee6737b59acf/Mathlib/Condensed/Discrete/Characterization.lean#L74-L104}{\faExternalLink}
Let $L : \Set \to \CondSet$ denote the functor that takes a set $X$ to the sheaf of locally constant maps $\LocConst(-, X)$, and for a set $X$, denote by $\underline{X}$ the constant sheaf at $X$. Recall that these functors are both left adjoint to the underlying set functor $U : \CondSet \to \Set$, which takes a condensed set $X$ to its underlying set $X(*)$. The following conditions on a condensed set $X$ are equivalent. 
\begin{enumerate}
  \item $X$ is discrete, i.e.~there exists a set $Y$ and an isomorphism $X \cong \underline{Y}$. 
  \item $X$ is in the essential image of the functor $L$
  \item The component at $X$ of the counit of the adjunction $\underline{(-)} \dashv U$ is an isomorphism.
  \item The component at $X$ of the counit of the adjunction $L \dashv U$ is an isomorphism.
  \item For every profinite set $S = \varprojlim_i S_i$, the canonical map $\varinjlim_i X(S_i) \to X(S)$ is an isomorphism. 
\end{enumerate}
\end{theorem}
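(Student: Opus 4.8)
The plan is to prove the theorem as a repackaging of the two main results together with the abstract adjunction lemmas of \S\ref{sec:adjunctions} and \S\ref{sec:constant}; no new mathematical content is needed. I would use condition (1) as a hub and connect each of the remaining conditions to it (or, for (4), to (2)) by a direct citation. The only points requiring genuine care are the full-faithfulness hypotheses of the abstract propositions and the matching of the two counit normalizations in (3) and (4).

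First I would record (1) $\Leftrightarrow$ (2), which is exactly the statement of Theorem~\ref{thm:A2}. For (1) $\Leftrightarrow$ (3) I would apply Proposition~\ref{prop:isDiscrete_iff_mem_essImage} with $\C = \CompHaus$ equipped with the coherent topology and $\A = \Set$. That proposition requires the constant sheaf functor $\underline{(-)} : \Set \to \CondSet$ to be fully faithful, which we obtain from the natural isomorphism $L \cong \underline{(-)}$ of Construction~\ref{const:L_iso_D} together with the full faithfulness of $L$. Since discreteness is by definition membership in the essential image of $\underline{(-)}$, Proposition~\ref{prop:isDiscrete_iff_mem_essImage} asserts precisely that this holds if and only if the counit $\underline{X(*)} \to X$ of the adjunction $\underline{(-)} \dashv U$ is an isomorphism, which is condition (3).

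For (2) $\Leftrightarrow$ (4) I would apply Proposition~\ref{prop:adjunction-generality-1} to the adjunction $L \dashv U$ of Construction~\ref{const:adjunction}. Because $L$ is fully faithful, that proposition gives that $X$ lies in the essential image of $L$ exactly when the counit $L(U(X)) \to X$ is an isomorphism, which is condition (4). Finally, (1) $\Leftrightarrow$ (5) is the content of Theorem~\ref{thm:main_colimit}: discreteness is equivalent to the colimit condition, and by Remark~\ref{rem:cocone_vs_informal} the colimit condition is in turn equivalent to the assertion that for every profinite set $S = \varprojlim_i S_i$ the canonical map $\varinjlim_i X(S_i) \to X(S)$ is an isomorphism.

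The main obstacle is not any deep argument but bookkeeping: one must verify that both $L$ and $\underline{(-)}$ are fully faithful before invoking the abstract propositions, and one must confirm that the counit maps appearing in (3) and (4) are identified with those produced by Propositions~\ref{prop:isDiscrete_iff_mem_essImage} and~\ref{prop:adjunction-generality-1} under the isomorphism $L \cong \underline{(-)}$. Once these identifications are in place, the five conditions are linked by the chain (3) $\Leftrightarrow$ (1) $\Leftrightarrow$ (2) $\Leftrightarrow$ (4) and (1) $\Leftrightarrow$ (5), completing the proof.
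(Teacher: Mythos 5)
Your proposal is correct and follows essentially the same route as the paper: the paper's proof simply cites \S\ref{sec:functors} for the equivalence of (1)--(4) and \S\ref{sec:colimit} for (5), and your explicit chain --- Theorem~\ref{thm:A2} for (1)$\Leftrightarrow$(2), Proposition~\ref{prop:isDiscrete_iff_mem_essImage} for (1)$\Leftrightarrow$(3), Proposition~\ref{prop:adjunction-generality-1} applied to the adjunction of Construction~\ref{const:adjunction} for (2)$\Leftrightarrow$(4), and Theorem~\ref{thm:main_colimit} with Remark~\ref{rem:cocone_vs_informal} for (1)$\Leftrightarrow$(5) --- is precisely the content of those sections, made explicit. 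Your attention to the full-faithfulness hypotheses is exactly the bookkeeping those citations rely on.
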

\begin{proof}
  The equivalence of (1)-(4) is established in \S\ref{sec:functors} where the isomorphism between the functors $L$ and $\underline{(-)}$ is constructed, and full faithfulness proved. For the equivalence with (5), see \S\ref{sec:colimit}.
\end{proof}

\begin{theorem}\label{thm:summary_mod} \href{https://github.com/leanprover-community/mathlib4/blob/ffb2f0dbf80ad18fa75f85bd0facee6737b59acf/Mathlib/Condensed/Discrete/Characterization.lean#L121-L158}{\faExternalLink}
  Let $R$ be a ring. Denote by $L : \Mod_R \to \CondMod_R$ the functor that takes an $R$-module $X$ to the sheaf of $R$-modules of locally constant maps $\LocConst(-, X)$, and for an $R$-module $X$, denote by $\underline{X}$ the constant sheaf at $X$. These functors are both left adjoint to the underlying module functor $U : \CondMod_R \to \Mod_R$, which takes a condensed $R$-module $X$ to its underlying module $X(*)$. The following conditions on a condensed $R$-module $X$ are equivalent. 
  \begin{enumerate}
    \item $X$ is discrete, i.e.~there exists an $R$-module $N$ and an isomorphism $X \cong \underline{N}$. 
    \item $X$ is in the essential image of the functor $L$
    \item The component at $X$ of the counit of the adjunction $\underline{(-)} \dashv U$ is an isomorphism.
    \item The component at $X$ of the counit of the adjunction $L \dashv U$ is an isomorphism.
    \item For every profinite set $S = \varprojlim_i S_i$, the canonical map $\varinjlim_i X(S_i) \to X(S)$ is an isomorphism. 
  \end{enumerate}
\end{theorem}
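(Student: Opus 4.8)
The plan is to mirror the proof of Theorem~\ref{thm:summary}, replacing sets by $R$-modules throughout for the equivalence of conditions (1)--(4), and then to reduce condition (5) to the corresponding statement about the underlying condensed set via Theorem~\ref{thm:main_module}.

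First I would establish the equivalence of (1)--(4), which is formally identical to the set case. By Construction~\ref{const:constantSheafModuleIso} there is a natural isomorphism $\underline{(-)} \cong L$ of functors $\Mod_R \to \CondMod_R$, so the essential images of the two functors coincide, giving (1) $\Leftrightarrow$ (2). By Lemma~\ref{lemma:LModuleFF} the functor $L$ is fully faithful, and transporting along the isomorphism of Construction~\ref{const:constantSheafModuleIso} shows the constant sheaf functor $\underline{(-)}$ is fully faithful as well. Proposition~\ref{prop:isDiscrete_iff_mem_essImage} then yields (1) $\Leftrightarrow$ (3), while Proposition~\ref{prop:adjunction-generality-1} applied to the adjunction $L \dashv U$ yields (2) $\Leftrightarrow$ (4). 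The equivalence (3) $\Leftrightarrow$ (4) is immediate once one observes that the isomorphism of functors from Construction~\ref{const:constantSheafModuleIso} identifies the two adjunctions, hence their counits.

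The remaining and more interesting step is the equivalence of (1) and (5). Here I would invoke Theorem~\ref{thm:main_module}: the condensed $R$-module $X$ is discrete if and only if its underlying condensed set $F_U(X)$ is discrete, where $F_U : \CondMod_R \to \CondSet$ is the sheaf-composition functor associated to the forgetful functor $U : \Mod_R \to \Set$, so that $F_U(X)(T) = U(X(T))$ for every $T$. By the equivalence (1) $\Leftrightarrow$ (5) of Theorem~\ref{thm:summary}, $F_U(X)$ is discrete if and only if it satisfies the colimit condition, that is, for every profinite set $S = \varprojlim_i S_i$ the canonical map $\varinjlim_i F_U(X)(S_i) \to F_U(X)(S)$ of sets is an isomorphism. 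It therefore remains to match this colimit condition for the underlying set with condition (5) stated for modules.

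The main obstacle is precisely this last matching. The colimit $\varinjlim_i X(S_i)$ is indexed by the opposite of the poset of discrete quotients of $S$, which is filtered since that poset is cofiltered (Construction~\ref{const:dq_le}). The forgetful functor $U : \Mod_R \to \Set$ preserves filtered colimits and reflects isomorphisms, as recorded in the proof of Proposition~\ref{prop:preservesSheafification_concrete}. Consequently $U$ sends the canonical map $\varinjlim_i X(S_i) \to X(S)$ of $R$-modules to the canonical map $\varinjlim_i F_U(X)(S_i) \to F_U(X)(S)$ of sets, and the former is an isomorphism of modules if and only if the latter is an isomorphism of sets. Combining this with the two equivalences of the previous paragraph gives (1) $\Leftrightarrow$ (5) for condensed $R$-modules, completing the chain of equivalences.
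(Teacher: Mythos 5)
Your proof is correct and follows the same route as the paper: the equivalence of (1)--(4) is carried over verbatim from the condensed set case, and (1) $\Leftrightarrow$ (5) is obtained from Theorem~\ref{thm:main_module} combined with the fact that the forgetful functor $\Mod_R \to \Set$ preserves filtered colimits and reflects isomorphisms (the paper phrases this as ``preserves and reflects filtered colimits''). Your write-up simply spells out the details that the paper's proof leaves implicit.
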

\begin{proof}
  The proof of the equivalence of (1)-(4) is identical to the condensed set case. For the equivalence with (5), we use the fact that a condensed module is discrete if and only if its underlying set is (Theorem~\ref{thm:main_module}), and that the forgetful functor from $R$-modules to sets preserves and reflects filtered colimits.
\end{proof}

\begin{remark}
  The variants of Theorems~\ref{thm:summary} and~\ref{thm:summary_mod} for light condensed sets and modules also hold \href{https://github.com/leanprover-community/mathlib4/blob/ffb2f0dbf80ad18fa75f85bd0facee6737b59acf/Mathlib/Condensed/Discrete/Characterization.lean#L193-L216}{\faExternalLink} \href{https://github.com/leanprover-community/mathlib4/blob/ffb2f0dbf80ad18fa75f85bd0facee6737b59acf/Mathlib/Condensed/Discrete/Characterization.lean#L230-L262}{\faExternalLink}. The statements are identical except that the last condition becomes 
  \begin{enumerate}
    \item[\textit{(5)}] \textit{For every light profinite set $S = \varprojlim_{n \in \N} S_n$, the canonical map $\varinjlim_n X(S_n) \to X(S)$ is an isomorphism.}
  \end{enumerate}
\end{remark}

\section*{Acknowledgements}
This work began as my first formalization project, depending on the definition of condensed objects in the Liquid Tensor Experiment (LTE). The work presented here is a much improved version of it, depending only on \mathlib. This improvement would not have been possible without the help of many people, to which I am very grateful and will try to list here. 

First, I would like to thank everyone involved in LTE for pioneering the formalization of condensed mathematics. 

I gave a talk about this work at the workshop on formalizing cohomology theories in Banff in 2023. I thank the organizers for the opportunity to give the talk, and more broadly all the participants for creating such a welcoming atmosphere at my first in-person encounter with the \Lean community. 

Shortly thereafter, I organized a masterclass in Copenhagen, where participants started the progress of formalizing condensed mathematics for \mathlib. I am grateful to everyone who took part in this, in particular to Adam Topaz and Kevin Buzzard for the lectures they delivered, and to Boris Kjær for co-organizing it with me. 

I wish to express my gratitude to the whole \mathlib community for their support and interest in this work, and in particular to Johan Commelin, Joël Riou, and Adam Topaz for thoroughly reviewing many pull requests related to the work presented here, which led to many improvements.

Finally, I would like to thank my advisor Dustin Clausen for explaining to me the importance of the colimit characterization of discrete condensed objects, for everything he has taught me about condensed mathematics more generally, and for supporting my decision to start focusing on  formalized mathematics.

I was supported by the Copenhagen Centre for Geometry and Topology through grant CPH-GEOTOP-DNRF151, which also provided financial support for the above-mentioned masterclass. 

\bibliographystyle{plainnat}
\bibliography{references}

\begin{thebibliography}{13}
\providecommand{\natexlab}[1]{#1}
\providecommand{\url}[1]{\texttt{#1}}
\expandafter\ifx\csname urlstyle\endcsname\relax
  \providecommand{\doi}[1]{doi: #1}\else
  \providecommand{\doi}{doi: \begingroup \urlstyle{rm}\Url}\fi

\bibitem[Asgeirsson(2021)]{msc-dagur}
Dagur Asgeirsson.
\newblock The foundations of condensed mathematics.
\newblock \url{https://dagur.sites.ku.dk/files/2022/01/condensed-foundations.pdf}, 2021.
\newblock Master's thesis.

\bibitem[Asgeirsson(2022)]{solid-spectra}
Dagur Asgeirsson.
\newblock Solid mathematics.
\newblock \url{https://dagur.sites.ku.dk/files/2022/02/solid.pdf}, 2022.

\bibitem[Asgeirsson(2024)]{nobeling}
Dagur Asgeirsson.
\newblock Towards solid abelian groups: A formal proof of {N}\"{o}beling’s theorem.
\newblock In Yves Bertot, Temur Kutsia, and Michael Norrish, editors, \emph{15th International Conference on Interactive Theorem Proving (ITP 2024)}, volume 309 of \emph{Leibniz International Proceedings in Informatics (LIPIcs)}, pages 6:1--6:17, Dagstuhl, Germany, 2024. Schloss Dagstuhl -- Leibniz-Zentrum f{\"u}r Informatik.
\newblock ISBN 978-3-95977-337-9.
\newblock \doi{10.4230/LIPIcs.ITP.2024.6}.
\newblock URL \url{https://drops.dagstuhl.de/entities/document/10.4230/LIPIcs.ITP.2024.6}.

\bibitem[Asgeirsson et~al.(2024)Asgeirsson, Brasca, Kuhn, Capriglio, and Topaz]{cat-found-cond}
Dagur Asgeirsson, Riccardo Brasca, Nikolas Kuhn, Filippo Alberto Edoardo Nuccio Mortarino Majno~Di Capriglio, and Adam Topaz.
\newblock Categorical foundations of formalized condensed mathematics, 2024.
\newblock URL \url{https://arxiv.org/abs/2407.12840}.

\bibitem[Avigad et~al.(2014)Avigad, {de Moura}, and Kong]{tpil}
Jeremy Avigad, Leonardo {de Moura}, and Soonho Kong.
\newblock \emph{{Theorem Proving in Lean}}.
\newblock Carnegie Mellon University, 2014.

\bibitem[Barwick and Haine(2019)]{pyknotic}
Clark Barwick and Peter Haine.
\newblock Pyknotic objects, {I}. {B}asic notions, 2019.

\bibitem[Clausen and Scholze(2022)]{complex}
Dustin Clausen and Peter Scholze.
\newblock Condensed mathematics and complex geometry.
\newblock \url{https://people.mpim-bonn.mpg.de/scholze/Complex.pdf}, 2022.

\bibitem[Clausen and Scholze(2023)]{analytic-stacks}
Dustin Clausen and Peter Scholze.
\newblock {Analytic Stacks. Lecture series at IHES, Paris, and MPIM, Bonn}.
\newblock \url{https://www.youtube.com/playlist?list=PLx5f8IelFRgGmu6gmL-Kf_Rl_6Mm7juZO}, 2023.

\bibitem[de~Moura and Ullrich(2021)]{lean4}
Leonardo de~Moura and Sebastian Ullrich.
\newblock The {L}ean 4 theorem prover and programming language.
\newblock In \emph{Automated deduction---{CADE} 28}, volume 12699 of \emph{Lecture Notes in Comput. Sci.}, pages 625--635. Springer, Cham, 2021.
\newblock \doi{10.1007/978-3-030-79876-5_37}.

\bibitem[Johnstone(2002)]{elephant}
Peter~T. Johnstone.
\newblock \emph{Sketches of an elephant: a topos theory compendium. {V}ol. 2}, volume~44 of \emph{Oxford Logic Guides}.
\newblock The Clarendon Press, Oxford University Press, Oxford, 2002.

\bibitem[mathlib community(2020)]{mathlib}
The mathlib community.
\newblock The lean mathematical library.
\newblock In \emph{Proceedings of the 9th ACM SIGPLAN International Conference on Certified Programs and Proofs}, CPP 2020, page 367–381, New York, NY, USA, 2020. Association for Computing Machinery.

\bibitem[Scholze(2019{\natexlab{a}})]{analytic}
Peter Scholze.
\newblock Lectures on analytic geometry.
\newblock \url{https://people.mpim-bonn.mpg.de/scholze/Analytic.pdf}, 2019{\natexlab{a}}.

\bibitem[Scholze(2019{\natexlab{b}})]{condensed}
Peter Scholze.
\newblock Lectures on condensed mathematics.
\newblock \url{https://people.mpim-bonn.mpg.de/scholze/Condensed.pdf}, 2019{\natexlab{b}}.

\end{thebibliography}

\end{document}